\newtheorem{theorem}{Theorem}[section]
\newtheorem{proposition}[theorem]{Proposition}
\newtheorem{lemma}[theorem]{Lemma}
\theoremstyle{definition}
\newtheorem{definition}[theorem]{Definition}
\newtheorem{remark}[theorem]{Remark}
\newtheorem{conjecture/question}[theorem]{Conjecture/Question}
\newtheorem{remark/definition}[theorem]{Remark/Definition}
\newtheorem{terminology/notation}[theorem]{Terminology/Notation}
\def\GG{{\textbf G}}
\def\PP{{\textbf P}}
\def\TT{{\textbf T}}
\def\OO{\mathcal{O}}
\def\cA{\mathcal{A}}
\def\P{\mathcal{P}}
\def\I{\mathcal{I}}
\def\cM{\mathcal{M}}
\def\cR{\mathcal{R}}
\def\rr{\overline{\mathcal{R}}}
\def\cZ{\mathcal{Z}}
\def\cU{\mathcal{U}}
\def\cY{\mathcal{Y}}
\def\cX{\mathcal{X}}
\def\Pic0{{\rm Pic}^0(X)}
\def\mm{\overline{\mathcal{M}}}
\def\cc{\overline{\mathcal{C}}}
\def\px{\widetilde{\mathcal{X}}}
\def\py{\widetilde{\mathcal{Y}}}
\def\pr{\widetilde{\mathcal{R}}}
\def\pc{\widetilde{\mathcal{C}}}
\newcommand{\dblq}{{/\!/}}
\def\aa{\overline{\mathcal{A}}}
\def\pa{\widetilde{\cA}}
\def\rr{\overline{\mathcal{R}}}
\begin{document}
\title{The universal abelian variety over $\cA_5$}

\author[G. Farkas]{Gavril Farkas}

\address{Humboldt-Universit\"at zu Berlin, Institut F\"ur Mathematik,  Unter den Linden 6
\hfill \newline\texttt{}
 \indent 10099 Berlin, Germany} \email{{\tt farkas@math.hu-berlin.de}}
\thanks{}

\author[A. Verra]{Alessandro Verra}
\address{Universit\'a Roma Tre, Dipartimento di Matematica, Largo San Leonardo Murialdo \hfill
\indent 1-00146 Roma, Italy}
 \email{{\tt
verra@mat.uniroma3.it}}
\maketitle
\begin{abstract}
{We establish a structure result for the universal abelian variety over $\cA_5$. This implies that the boundary divisor of $\aa_6$ is unirational and leads to a lower bound on the slope of the cone of effective divisors on $\aa_6$.}
\end{abstract}
\vskip 3pt

The general principally polarized abelian variety $[A, \Theta]\in \cA_g$ of dimension $g\leq 5$ can be realized as a Prym variety. Abelian varieties of small dimension can be studied in this way via the rich and concrete theory of curves, in particular, one can establish that $\cA_g$ is unirational in this range. In the case $g=5$, the Prym map $P:\cR_6\rightarrow \cA_5$ is finite of degree $27$, see \cite{DS}; three different proofs \cite{Don}, \cite{MM}, \cite{Ve1} of the unirationality of $\cR_6$ are known. The moduli space $\cA_g$ is of general type for $g\geq 7$, see \cite{Fr}, \cite{Mu}, \cite{T}. Determining the Kodaira dimension of $\cA_6$ is a notorious open problem.

The aim of this paper is to give a simple unirational parametrization of the universal abelian variety over $\cA_5$ and hence of the boundary divisor of a compactification of $\cA_6$. We denote by $\phi:\cX_{g-1}\rightarrow \cA_{g-1}$ the universal abelian variety of dimension $g-1$ (in the sense of stacks). The moduli space $\pa_g$ of principally polarized abelian varieties of dimension $g$ and their rank $1$ degenerations is a partial compactification of $\cA_g$ obtained by blowing-up $\cA_{g-1}$ in the Satake compactification, cf. \cite{Mu}. Its boundary $\partial \pa_g$ is isomorphic to the universal Kummer variety in dimension $g-1$ and there exist a surjective double covering $j: \cX_{g-1}\rightarrow \partial \pa_g$. We establish a simple structure result for the boundary $\partial \pa_6$:
\begin{theorem}\label{unir1}
The universal abelian variety $\cX_5$ is unirational.
\end{theorem}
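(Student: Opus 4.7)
\emph{Plan.} The approach is via the Prym uniformization of $\cA_5$: the Prym map $\cR_6\to \cA_5$ is dominant and generically finite of degree $27$, and the universal Prym variety $\P_6\to \cR_6$ maps dominantly and generically finitely onto $\cX_5$, since it identifies the fiber $P(\tilde{C}/C)$ over $(C,\eta)\in \cR_6$ with the fiber $A$ of $\cX_5$ over $[A,\Theta]\in \cA_5$. Because $\cR_6$ is already known to be unirational, it suffices to prove that $\P_6$ itself is unirational.

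Points of $\P_6$ above $(C,\eta)$ are produced by the Abel--Prym map
\[
\alpha:\tilde{C}^{(5)}\longrightarrow P(\tilde{C}/C),\qquad D\longmapsto \OO_{\tilde{C}}\bigl(D-\sigma(D)\bigr),
\]
which for a generic Prym datum is a morphism between smooth varieties of dimension $5$ and is dominant: its image is the $5$-fold sum of the Abel--Prym curve $X\subset P$, and this $5$-fold sum exhausts $P$ because $X$ generates the simple abelian fivefold $P$. Fixing a unirational parametrization $B\dashrightarrow \cR_6$ with pulled-back universal Prym curve $\widetilde{\cC}\to B$ and carrying out the above construction fibrewise, one obtains a dominant rational map
\[
\widetilde{\cC}^{(5)}/B \dashrightarrow \P_6\times_{\cR_6}B,
\]
and it therefore suffices to show that the relative symmetric product $\widetilde{\cC}^{(5)}/B$ is unirational.

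This last reduction is the main obstacle, and is where the structure theorem announced in the abstract must intervene. A naive approach fails: for a generic curve $\tilde{C}$ of genus $11$, the symmetric product $\tilde{C}^{(5)}$ is birational to $W_5\subset J^5(\tilde{C})$, which is not rational, so unirationality of $\widetilde{\cC}^{(5)}/B$ cannot come for free from unirationality of $B$. One must therefore choose the unirational parametrization $B\to \cR_6$ to carry extra geometric data supplying the required $5$-tuples of points on $\tilde{C}$. Concretely, the plan is to realize a generic Prym curve $\tilde{C}$ inside a rational ambient family --- for instance an embedding in a rational surface, scroll, or suitable linear subsystem governing the double cover structure --- so that effective divisors of degree $5$ on $\tilde{C}$ are cut out by a moving linear system on the ambient; this yields a projective bundle over the unirational base $B$ that dominates $\widetilde{\cC}^{(5)}/B$. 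Since projective bundles over unirational varieties are unirational, composing with $\alpha$ and then with $\P_6\to \cX_5$ produces the desired unirational parametrization of $\cX_5$.
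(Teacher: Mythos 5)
Your reduction is correct and is exactly the paper's: unirationality of $\cX_5$ is pulled back through the degree-$27$ Prym map to unirationality of the universal Prym variety over $\cR_6$, which in turn follows from unirationality of the fifth (symmetric) product of the universal Prym curve via the universal Abel--Prym map. You also correctly diagnose where the difficulty sits: unirationality of the relative product $\widetilde{C}^{(5)}$ over a unirational base does \emph{not} come for free, since $\widetilde{C}^{(5)}\cong W_5(\widetilde{C})$ is not rational for a general genus $11$ curve. But that step is precisely the content of the theorem, and your proposal stops at announcing a ``plan'' for it without supplying the construction; as written, the proof has a genuine gap at its central point.

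The missing idea is the conic-bundle presentation of genus $6$ Prym curves. One represents a general $(C,\eta_C)\in\cR_6$ as the (normalized) discriminant of a conic bundle $p:Q\rightarrow\PP^2$, where $Q\subset\PP^2\times\PP^2$ is a hypersurface of bidegree $(2,2)$ with four assigned nodes, i.e.\ a member of the $15$-dimensional linear system $\bigl|\I^2_{\{w_1,\ldots,w_4\}}(2,2)\bigr|$; the covering curve $\widetilde{C}$ parametrizes the line components of the singular fibres of $p$. The key numerical fact is that forcing $Q$ to contain a prescribed line $\{o\}\times\ell$ in a fibre of the first projection imposes \emph{three} linear conditions, so five general such lines single out a \emph{unique} $Q\in\PP^{15}$, and each chosen line is tautologically a marked point of $\widetilde{C}$. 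This yields a dominant rational map from the rational $20$-fold $\GG^5$, with $\GG=\PP^2\times(\PP^2)^{\vee}$, onto $\pc^5$ (indeed a birational isomorphism $\GG^5\dashrightarrow\PP^{15}\times_{\rr_6}\pc^5$). Note that this is \emph{not} a projective bundle over a parametrization of $\cR_6$ whose fibres cut degree-$5$ divisors on a fixed curve, as your plan suggests: the five points of $\GG$ determine the Prym curve and the five marked points simultaneously, which is why the dimension count closes. Finally, a small but real slip: your Abel--Prym map $D\mapsto\OO_{\widetilde{C}}(D-\sigma(D))$ with $\deg D=5$ odd lands in the wrong component of $\ker(\mathrm{Nm})$, not in the Prym variety itself; the paper avoids this by using the even-degree combination $x_1-\iota(x_1)+\cdots+x_4-\iota(x_4)+2x_5-2\iota(x_5)$.
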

This immediately implies that the boundary divisor $\partial \pa_6$ is unirational as well. What we prove is actually stronger than Theorem \ref{unir1}. Over the moduli space $\cR_g$ of smooth Prym curves of genus $g$, we consider the universal Prym variety $\varphi:\cY_g\rightarrow \cR_g$ obtained by pulling-back $\cX_{g-1}\rightarrow \cA_{g-1}$ via the Prym map $P:\cR_g\rightarrow \cA_{g-1}$. Let $\rr_g$ be the moduli space of stable Prym curves of genus $g$ together with the universal Prym curve $\tilde{\pi}:\pc\rightarrow \rr_g$ of genus $2g-1$. If $\pc^{g-1}:=\pc \times_{\rr_g}\ldots \times_{\rr_g} \pc$ is the $(g-1)$-fold product, one has a universal \emph{Abel-Prym} rational map $\mathfrak{ap}:\pc^{g-1}\dashrightarrow \cY_{g}$, whose restriction on each individual Prym variety is the usual Abel-Prym map. The rational map $\mathfrak{ap}$ is dominant and generically finite (see Section 4 for details). We prove the following result:

\begin{theorem}\label{unir2}
The five-fold product $\pc^5$ of the universal Prym curve over $\rr_6$ is unirational.
\end{theorem}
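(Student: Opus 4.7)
The plan is to realize a general genus $6$ Prym curve together with $5$ marked points on its étale double cover as a linear section of a Nikulin K3 surface decorated with $5$ free points on its K3 double cover, and to transport the resulting rational parametrization onto $\pc^5$.

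\textbf{Nikulin parametrization of $\cR_6$.} A Nikulin K3 surface $(Y,H,M)$ of genus $6$ is a smooth K3 surface equipped with a polarization $H$ with $H^2=10$ and a configuration of $8$ disjoint $(-2)$-curves $N_1,\dots,N_8$ satisfying $N_1+\cdots+N_8 \sim 2M$ and $M\cdot H=0$. It carries a K3 double cover $\pi\colon \widetilde Y \to Y$ polarized by $L:=\pi^*H$ (with $L^2=20$). For a general smooth $C\in|H|$, the pair $(C, M|_C)$ is a smooth Prym curve of genus $6$, whose étale double cover coincides with $\pi^{-1}(C)\to C$; in particular $\widetilde C\in|L|$ on $\widetilde Y$. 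Previous work of the authors identifies a dominant rational map from the $17$-dimensional parameter space of such pairs $(Y,C)$ to $\cR_6$, with generic fibers of dimension $2$.

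\textbf{Incidence over the universal K3 cover.} Let $\cN_6$ be the moduli of polarized Nikulin K3 surfaces of genus $6$ --- an $11$-dimensional \emph{unirational} moduli space --- and let $\mathfrak Y\to\cN_6$ denote the associated universal K3 cover. Form the incidence
\[
\I := \bigl\{(\widetilde Y,q_1,\dots,q_5,C) : q_i\in\widetilde Y,\ C\in|H|,\ \pi(q_i)\in C\bigr\}.
\]
Its projection to the fibered product $\mathfrak Y^5:=\mathfrak Y\times_{\cN_6}\cdots\times_{\cN_6}\mathfrak Y$ is birationally a $\PP^1$-bundle, the fiber being the pencil of curves in $|H|\cong\PP^6$ passing through the $5$ image points $\pi(q_i)\in Y$. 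Hence, if $\mathfrak Y^5$ is unirational, so is $\I$. The rational map
\[
\I \dashrightarrow \pc^5,\qquad (\widetilde Y,q_1,\dots,q_5,C)\longmapsto\bigl([C, M|_C];\,q_1,\dots,q_5\bigr)
\]
is well defined (since each $q_i$ lies on $\pi^{-1}(C)=\widetilde C$) and is dominant: surjectivity onto $\cR_6$ follows from the Nikulin--Prym parametrization, and the $q_i$'s are free along $\widetilde C$. A dimension count ($\dim\I=22=\dim\pc^5+2$) matches the known $2$-dimensional fibers of the Nikulin--Prym map.

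\textbf{Main obstacle.} The crux is establishing unirationality of the universal $5$-fold product $\mathfrak Y^5$, which is genuinely stronger than unirationality of $\cN_6$. The envisioned approach is to exhibit a sufficiently concrete projective model of the general Nikulin K3 double cover $\widetilde Y$ --- as a linear or complete-intersection section of a fixed rational (typically homogeneous) ambient variety $G$ --- so that a $5$-tuple of points on $\widetilde Y$ may be realized by first choosing $(q_1,\dots,q_5)\in G^5$ freely, and then letting the parameters cutting out $\widetilde Y$ range over the linear subsystem annihilating all five $q_i$'s. Producing such a rational model for Nikulin K3 surfaces of genus $6$, and verifying that the Nikulin structure is preserved along the parametrization, is the chief projective-geometric obstacle in the argument.
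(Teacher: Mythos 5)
Your argument breaks at the very first step: for genus $6$ the Nikulin--Prym parametrization is \emph{not} dominant onto $\cR_6$. This is precisely the one exceptional genus in the range $g\leq 7$. If $(Y,H,M)$ is a Nikulin surface of genus $6$ and $C\in |H|$, the Prym-canonical bundle $K_C\otimes M_{|C}$ is cut out by $|H+M|$ with $h^0(Y,H+M)=5$, so the Prym-canonical model of $C$ lies on the sextic surface $\phi_{H+M}(Y)\subset \PP^4$; since $h^0(Y, 2H+2M)=14<15=h^0(\PP^4,\OO_{\PP^4}(2))$, that surface (hence the curve) lies on a quadric. The locus of $[C,\eta]\in\cR_6$ whose Prym-canonical curve lies on a quadric is a proper divisor --- it is exactly the ramification divisor of the Prym map $P:\cR_6\rightarrow\cA_5$, the same divisor this paper must avoid when asserting $h^0(\Gamma,\eta(1))=0$. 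So your incidence variety $\I$ can only dominate a divisor in $\cR_6$, and a fortiori cannot dominate $\pc^5$. Even granting dominance, your reduction leaves the essential point --- unirationality of the universal $5$-fold product $\mathfrak Y^5$ --- as an acknowledged ``obstacle'' rather than a proof, so the argument would not close in any case.

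The paper implements the ``choose the five points first, then cut out the parametrizing object'' idea you sketch in your last paragraph, but with conic bundles in place of Nikulin surfaces, and this is what makes it work. A general Prym curve of genus $6$ is the discriminant double cover of a $4$-nodal threefold $Q\subset\PP^2\times\PP^2$ of bidegree $(2,2)$, moving in a linear system $\PP^{15}$ once the four nodes are fixed. Containing a line $\{o\}\times\ell$ in a fibre of $p:Q\rightarrow\PP^2$ imposes exactly three linear conditions, so five general points of the rational variety $\GG=\PP^2\times(\PP^2)^{\vee}$ determine a \emph{unique} $Q$, and the five lines become five marked points on the covering curve $\widetilde{C}$. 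This yields the dominant map $\zeta:\GG^5\dashrightarrow\pc^5$ between $20$-dimensional varieties (indeed a birational identification $\GG^5\cong\PP^{15}\times_{\rr_6}\pc^5$), with no auxiliary unirationality statement left to verify. If you want to salvage a K3-type approach you would need both a parametrizing family that actually dominates $\cR_6$ and a model linear enough that point conditions are linear on the parameter space; the conic-bundle model supplies both at once.
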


The key idea in the proof of Theorem \ref{unir2} is to view smooth Prym curves of genus $6$ as discriminants of conic bundles, via their representation as symmetric determinants of quadratic forms in three variables.  We fix four general points $u_1, \ldots, u_4\in \PP^2$ and set $w_i:=(u_i, u_i)\in \PP^2\times \PP^2$. Since the action of the automorphism group $\mbox{Aut}(\PP^2\times \PP^2)$ on $\PP^2\times \PP^2$ is $4$-transitive, any set of four general points in $\PP^2\times \PP^2$ can be brought to this form. We then consider the linear system
$$\PP^{15}:=\Bigl|\I_{\{w_1, \ldots, w_4\}}^2(2,2)\Bigr|\subset \Bigl|\OO_{\PP^2\times \PP^2}(2,2)\Bigr|$$
of hypersurfaces $Q \subset \PP^2\times \PP^2$ of bidegree $(2,2)$ which are nodal at $w_1, \ldots, w_4$. For a general threefold $Q\in \PP^{15}$, the first projection
$p:Q\rightarrow \PP^2$ induces a conic bundle structure with a sextic discriminant curve $\Gamma\subset \PP^2$ such that $p(\mbox{Sing}(Q))=\mbox{Sing}(\Gamma)$. The discriminant curve $\Gamma$ is nodal precisely at the points $u_1, \ldots, u_4$. Furthermore, $\Gamma$ is equipped with an unramified double cover $p_{\Gamma}:\widetilde{\Gamma}\rightarrow \Gamma$, parametrizing the lines which
are components of the singular fibres of $p:Q\rightarrow \PP^2$. By normalizing, $p_{\Gamma}$ lifts to an unramified double cover
$f:\widetilde C \to C$ between the normalization $\widetilde C$ of $\widetilde \Gamma$ and the normalization $C$ of $\Gamma$ respectively.
Note that there exists an exact sequence of generalized Prym varieties
$$ 0 \longrightarrow (\mathbf C^*)^4 \longrightarrow P(\widetilde{\Gamma}/\Gamma) \longrightarrow P(\widetilde{C}/C) \longrightarrow 0.$$
One can show without much effort that the assignment $\PP^{15}\ni Q\mapsto [\widetilde{C}\stackrel{f}\rightarrow C]\in \cR_6$ is dominant. This offers an alternative, much simpler, proof of the unirationality of $\cR_6$. However, much more can be obtained with this construction.

\vskip 4pt

Let $\GG:=\PP^2\times (\PP^2)^{\vee}=\bigl\{(o, \ell):o\in \PP^2, \ell\in \{o\}\times (\PP^2)^{\vee}\bigr\}$ be the Hilbert scheme of lines in the fibres of the first projection $p:\PP^2\times \PP^2\rightarrow \PP^2$. Since containing a given line in a fibre of $p$ imposes \emph{three} linear conditions on the linear system $\PP^{15}$ of threefolds $Q \subset \PP^2\times \PP^2$ as above, it follows
that imposing the condition $\{o_i\}\times \ell_i\subset Q$ for \emph{five} general lines, singles out a \emph{unique} conic bundle $Q\in \PP^{15}$. This induces an \'etale double cover
$f:\widetilde{C}\rightarrow C$, as above, over a smooth curve of genus $6$. Moreover, $f$ comes equipped with five marked points $\ell_1, \ldots, \ell_5\in \widetilde{C}$. To summarize, we can define a rational map
$$\zeta:\GG^5\dashrightarrow \pc^5, \ \ \ \zeta\Bigl((o_1, \ell_1), \ldots, (o_5, \ell_5)\Bigr):=\Bigl(f:\widetilde{C}\rightarrow C, \ell_1, \ldots, \ell_5\Bigr), $$
between two 20-dimensional varieties, where $\GG^5$ denotes the $5$-fold product of $\GG$.

\begin{theorem}\label{dominance}
The morphism $\zeta:\GG^5\dashrightarrow \pc^5$ is dominant, so that $\pc^5$ is unirational.
\end{theorem}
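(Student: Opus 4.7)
The plan is to prove dominance of $\zeta$ by matching dimensions and then showing the general fiber is finite; once dominance is proved, unirationality of $\pc^5$ follows from that of $\GG^5=(\PP^2\times (\PP^2)^{\vee})^5$. A direct count gives $\dim \GG^5 = 4\cdot 5 = 20$ and $\dim \pc^5 = \dim \rr_6 + 5 = 15+5 = 20$, so dominance is equivalent to finiteness of the general fiber of $\zeta$.

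I would factor $\zeta$ through the incidence correspondence
$$\mathcal{Z} := \bigl\{(Q;L_1,\ldots,L_5)\ :\ Q \in \PP^{15},\ L_i\subset Q\bigr\},$$
with each $L_i$ a line of the form $\{o_i\}\times \ell_i$. The codimension count recalled in the excerpt---three linear conditions per line---shows that the projection $\mathcal{Z}\dashrightarrow \GG^5$ is birational, so $\zeta$ decomposes as $\GG^5\dashrightarrow \mathcal{Z}\to \pc^5$, where the second arrow sends $(Q;L_1,\ldots,L_5)$ to the Prym curve $f:\tilde C\to C$ of $Q$ together with the five points of $\tilde C$ corresponding to the $L_i$ under the identification of $\tilde \Gamma$ with the Hilbert scheme of lines in singular fibers of $p:Q\to \PP^2$, followed by the normalization $\tilde C\to \tilde \Gamma$.

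To verify that $\mathcal{Z}\to \pc^5$ has finite general fiber I would decompose a preimage of $(f,p_1,\ldots,p_5)$ into the choice of a conic bundle $Q'$ with Prym curve $f$, and then of the five lines $L_i'\subset Q'$ mapping to $p_i$. \emph{Step (a):} the excerpt establishes that the Prym map $\PP^{15}\dashrightarrow \rr_6$, $Q\mapsto (\tilde C\to C)$, is dominant; since both spaces have dimension $15$, dominance forces it to be generically finite, so only finitely many $Q'$ realize the given $f$. \emph{Step (b):} for each such $Q'$, a general point $p_i\in \tilde C$ lifts through the normalization to a unique smooth point of $\tilde \Gamma_{Q'}$ and thus to a unique line of $Q'$, so the five lines are determined. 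Combining the two steps, the fiber of $\mathcal{Z}\to \pc^5$ is finite, hence this map is generically finite and, by equidimensionality, dominant.

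The key ingredient---and the main obstacle to writing out a fully rigorous proof---is step (a). I would verify it by producing an explicit $Q_0\in \PP^{15}$ whose associated Prym curve has a discrete preimage, leveraging that a general genus $6$ curve $C$ carries only finitely many $g^2_6$'s (each giving a model of $C$ as a plane sextic with four nodes), together with the $4$-transitivity of $\mathrm{Aut}(\PP^2)$ on quadruples of points in general position which brings the four nodes to the prescribed positions $u_1,\ldots,u_4$. Once step (a) is secured, the factorization above delivers dominance of $\zeta$ and with it the unirationality of $\pc^5$.
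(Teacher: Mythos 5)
Your factorization of $\zeta$ through the incidence variety $\bigl\{(Q;L_1,\ldots,L_5)\bigr\}$ is exactly the paper's argument: the paper observes that $\GG^5$ is birationally the fibre product $\PP^{15}\times_{\rr_6}\pc^5$ (five lines imposing $5\times 3=15$ independent linear conditions on $\PP^{15}$), and then dominance of $\zeta$ reduces to dominance of the discriminant map $\mathfrak{d}:\PP^{15}\dashrightarrow \P_6$ followed by the finite covering $\rho:\P_6\rightarrow\cR_6$. Your dimension counts and your step (b) are fine.

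The gap is precisely where you locate it, in step (a), and your proposed repair does not close it. To show that the fibre of $\PP^{15}\dashrightarrow\rr_6$ over the Prym curve of some $Q_0$ is finite, you must control three choices: (i) the plane model $\Gamma\subset\PP^2$ with nodes at $u_1,\ldots,u_4$, (ii) the lift $\eta\in\mathrm{Pic}^0(\Gamma)[2]$ of $\eta_C$, and (iii) the set of conic bundles $Q'\in\PP^{15}$ with prescribed discriminant data $(\Gamma,\eta)$. The finiteness of $g^2_6$'s on a general genus $6$ curve together with $4$-transitivity of $\mathrm{Aut}(\PP^2)$ handles (i), and $|(\nu^*)^{-1}(\eta_C)|=2^4$ handles (ii), but neither touches (iii): a priori the locus of symmetric $3\times 3$ matrices of quadrics with a given determinantal curve and a given associated $2$-torsion sheaf could be positive-dimensional, in which case no fibre of $\mathfrak{d}$ is finite and the equidimensionality argument collapses. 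This is exactly what the paper's Section 1 supplies via Beauville's theory of symmetric determinantal representations: the resolution $0\to\OO_{\PP^2}(-4)^{\oplus 3}\to\OO_{\PP^2}(-2)^{\oplus 3}\to\eta\to 0$ shows that $\eta$ determines the symmetric matrix $A$ (equivalently the conic bundle $Q$ in $\PP\bigl(H^0(\OO_\Gamma(1))^\vee\bigr)\times\PP\bigl(H^0(\eta(2))^\vee\bigr)$) essentially uniquely, the residual ambiguity being absorbed by the choice of coordinates on the second factor, which is pinned down up to finitely many choices by requiring the four nodes of $Q$ to sit at $w_1,\ldots,w_4$. This determinantal input yields both existence and finiteness at once (Theorem \ref{r6thm} and the birational isomorphism $\TT\dblq\mathrm{Aut}(\PP^2\times\PP^2)\cong\P_6$), and without it, or some explicit substitute computation that you do not provide, step (a) remains unproved.
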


More precisely, we show that $\GG^5$ is birationally isomorphic to the fibre product $\PP^{15}\times_{\rr_6} \pc^5$.
In order to set Theorem \ref{dominance} on the right footing and in view of enumerative calculations, we introduce a $\PP^2$-bundle $\pi:\PP(\cM)\rightarrow S$ over the quintic del Pezzo surface
$S$ obtained by blowing-up $\PP^2$ at the points $u_1, \ldots, u_4$.
The rank $3$ vector bundle $\cM$ on $S$ is obtained by making an elementary transformation along the four exceptional divisors $E_1, \ldots, E_4$ over $u_1, \dots, u_4$. The nodal threefolds $Q\subset \PP^2\times \PP^2$ considered above can be thought of as sections of a tautological linear system on $\PP(\cM)$, and via the identification
$$\Bigl|\I_{\{w_1, \ldots, w_4\}}^2(2,2)\Bigr|=\Bigl|\OO_{\PP(\cM)}(2)\Bigr|,$$ we can view $4$-nodal conic bundles in $\PP^2\times \PP^2$ as \emph{smooth} conic bundles over $S$. In this way the numerical characters of a pencil of such conic bundles can be computed (see Sections 2 and 3 for details).

\vskip 5pt

Theorem \ref{dominance} is then used to give a lower bound for the slope of the effective cone of $\aa_6$ (though we stop short of determining the Kodaira dimension of $\aa_6$).
Recall that if $E$ is an effective divisor on the perfect cone compactification $\aa_g$ of $\cA_g$ with no component supported on the boundary $D_g:=\aa_g-\cA_g$ and $[E]= a\lambda_1 -b[D_g]$, where $\lambda_1\in CH^1(\pa_g)$ is the Hodge class, then the slope of $E$ is defined as $s(E):=\frac{a}{b}\geq 0$. The slope $s(\aa_g)$ of the effective cone of divisors of $\aa_g$ is the infimum of the slopes of all effective divisors on $\aa_g$. This important invariant governs to a large extent the birational geometry of $\cA_g$; for instance, since $K_{\aa_g}=(g+1)\lambda_1-[D_g]$, the variety $\aa_g$ is of general type if $s(\aa_g)<g+1$, and uniruled when $s(\aa_g)>g+1$. It is shown in the appendix of \cite{GSMH} that the slope of the moduli space $\cA_g$ is independent of the choice of a toroidal compactification.

It is known that $s(\aa_4)=8$ and that the Jacobian locus $\mm_4\subset \aa_4$ achieves the minimal slope \cite{SM}; one of the results of \cite{FGSMV} is the calculation $s(\aa_5)=\frac{54}{7}$. Furthermore, the only irreducible effective divisor on $\aa_5$ of minimal slope is the closure of the Andreotti-Mayer divisor $N_0'$
consisting of $5$-dimensional ppav $[A, \Theta]$ for which the theta divisor $\Theta$ is singular at a point which is not $2$-torsion. Concerning $\aa_6$, we establish the following estimate:
\begin{theorem}\label{slope}
The following lower bound holds: $s(\aa_6)\geq \frac{53}{10}.$
\end{theorem}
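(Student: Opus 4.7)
The plan is to produce, via Theorem \ref{dominance}, a covering family of curves on the boundary divisor $\partial\pa_6\subset D_6\subset\aa_6$, and then bound the slope by computing the intersection of a general family member with $\lambda_1$ and $[D_6]$.

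For the construction, fix four of the five pairs $(o_i,\ell_i)\in\GG$ generally and vary the fifth pair along a general $1$-parameter family $B\subset\GG$. Pushing this pencil through the dominant map $\zeta:\GG^5\dashrightarrow\pc^5$, then through the universal Abel--Prym map $\pc^5\dashrightarrow\cY_6$, and finally through the double cover $\cX_5\to\partial\pa_6$, one obtains a curve $\tilde R\subset\partial\pa_6$. Varying the four fixed pairs ($16$ parameters) together with the choice of pencil $B$ ($3$ parameters) yields a $19$-parameter family of such curves on the $20$-dimensional variety $\partial\pa_6$; by Theorem \ref{dominance} this is a covering family, and since $\partial\pa_6$ is dense in the divisor $D_6$, it is also a covering family of $D_6$.

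If $E\equiv a\lambda_1-b[D_6]$ is an irreducible effective divisor with no component supported on the boundary, then $E$ does not contain $\partial\pa_6$, so for a general member $\tilde R$ of our family we have $E\cdot\tilde R\geq 0$, giving
$$s(E)=\frac{a}{b}\;\geq\;\frac{\tilde R\cdot[D_6]}{\tilde R\cdot\lambda_1}.$$
It then suffices to show that the right-hand ratio equals $\tfrac{53}{10}$. The required intersection numbers are computed on the $\PP^2$-bundle $\PP(\cM)\to S$ over the quintic del Pezzo surface $S$: the pencil of $4$-nodal conic bundles in $\PP^2\times\PP^2$ corresponds to a pencil in the linear system $|\OO_{\PP(\cM)}(2)|$, and intersection-theoretic computations in terms of the Chern classes of $\cM$ and the divisor classes on $S$ yield both the boundary degree $\tilde R\cdot[D_6]$ (which records the semistable degenerations of the induced family of Prym curves) and the Hodge degree $\tilde R\cdot\lambda_1$ (obtained via Grothendieck--Riemann--Roch, or equivalently Mumford's formula, applied to the universal Prym curve $\pc\to\rr_6$).

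The principal difficulty will be the bookkeeping. One must determine the degree of the generically finite composition $\GG^5\to\partial\pa_6$ so that intersection numbers computed on $\GG^5$ can be correctly transported; identify precisely which degenerations in the pencil contribute to $[D_6]$ (in a general pencil these should be those $t\in B$ where the discriminant sextic $\Gamma_t$ acquires an additional simple node, counted with appropriate multiplicity); and correctly account for the contribution of the torus $(\mathbf{C}^*)^4$ in the exact sequence $0\to(\mathbf{C}^*)^4\to P(\widetilde\Gamma/\Gamma)\to P(\widetilde C/C)\to 0$ when pulling back $\lambda_1$ along the family. Assembling these numerical characters---precisely the calculation that the $\PP(\cM)$ setup in Sections 2 and 3 is designed to facilitate---will produce the ratio $\tfrac{53}{10}$.
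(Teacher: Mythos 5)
Your overall strategy is exactly the paper's: produce a sweeping rational curve $\tilde R$ in the boundary divisor $D_6$ by pushing a one-parameter family in $\GG^5$ through $\zeta$, the Abel--Prym map and the Kummer map, and then bound $s(E)$ by the ratio $\frac{\tilde R\cdot[D_6]}{\tilde R\cdot\lambda_1}$. (The paper's specific choice of family is the pencil of lines through a fixed point $o\in\PP^2$, which traces the nodal discriminant cubic $\Delta_T$ of a net of conics; this specific choice is what makes the enumerative bookkeeping tractable, but your ``general one-parameter family in $\GG$'' is the same idea.)

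However, there is a genuine gap in your plan for computing $\tilde R\cdot[D_6]$. You describe this number as recording ``the semistable degenerations of the induced family of Prym curves,'' i.e.\ the points of the pencil where the discriminant sextic acquires an extra node. That is only part of the answer. Since $\tilde R$ lies \emph{inside} $D_6$, the intersection $\tilde R\cdot[D_6]$ is a normal-bundle (self-intersection) computation, and the paper's Proposition \ref{mumford} gives $(j\circ\chi)^*([D])=-2\theta_{\mathrm{pr}}+\varphi^*(\delta_0^{'})$. The term $\varphi^*(\delta_0^{'})$ is the degeneration count you have in mind (it contributes $3\cdot 77=231$), but the term $-2\theta_{\mathrm{pr}}$ pulls back under the Abel--Prym map to $-\sum_{j=1}^{4}\psi_{x_j}-4\psi_{x_5}$ (Proposition \ref{ap}; note the weight $4$ on the last point, coming from the coefficient $2$ in the definition of $\mathfrak{ap}$), contributing $-8\cdot 9=-72$. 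Omitting it would give $231/30$ instead of $159/30=53/10$. A parallel, smaller issue affects $\tilde R\cdot\lambda_1$: the Hodge class pulls back to $\lambda-\frac14\delta_0^{\mathrm{ram}}$ on $\pr_6$, so the $32$ conic bundles with a double line (the $\delta_0^{\mathrm{ram}}$ degenerations) enter with coefficient $-\frac14$, giving $54-24=30$; your mention of the torus $(\mathbf C^*)^4$ does not capture this correction, which comes from the ramification-type boundary of $\rr_6$ rather than from the four nodes of the sextic. Without these two ingredients --- Mumford's formula for $j^*[D]$ on the universal Kummer variety and the pullback of the theta divisor under the Abel--Prym map --- the computation cannot produce the stated ratio.
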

Note that this is the first concrete lower bound on the slope of $\aa_6$. The idea of proof of Theorem \ref{slope} is very simple. Since $\pc^5$ is unirational, we choose a suitable sweeping rational curve $i:\PP^1\rightarrow \pc^5$, which we then push forward to $\aa_6$ as follows:
$$\xymatrix{
    \PP^1 \ar[r]^i  \ar@/^2pc/[rrrr]^h &  \pc^5  \ar[r]^{\mathfrak{ap}}  &  \py_6  \ar[r]  &  \px_5  \ar[r]^j & D_6 \\
   }
$$
Here $\py_6$ and $\px_5$ are partial compactifications of $\cY_6$ and $\cX_5$ respectively which are described in Section 4, whereas $D_6$ is the boundary divisor of $\aa_6$. The curve $h(\PP^1)$ sweeps the boundary divisor of $\aa_6$ and intersects non-negatively any effective divisor on $\aa_6$ not containing $D_6$. In particular, $$s(\aa_6)\geq \frac{h(\PP^1)\cdot [D_6]}{h(\PP^1)\cdot \lambda_1}.$$
To define $i:\PP^1\rightarrow \pc^5$, we fix general points $(o_1, \ell_1), \ldots, (o_4, \ell_4)\in \GG$ and a further general point $o\in \PP^2$. Then we consider the image under $\zeta$ of the pencil of lines in $\PP^2$ through $o$, that is, the sweeping curve $i$ is defined as
$$\PP(T_o(\PP^2))\ni \ell\mapsto \zeta\Bigl((o_1,\ell_1), \ldots, (o_4, \ell_4), (o, \ell)\Bigr)\in \pc^5.$$
The calculation of the numerical characters of $h(R)\subset \aa_6$ is a consequence of the geometry of the map $\zeta$ and is completed in Section 4.

\vskip 4pt

We close the Introduction by discussing the structure of the paper. Theorem \ref{dominance} (and hence also Theorems \ref{unir1} and \ref{unir2}) are proven rather quickly in  Section 1. The bulk of the paper is devoted to the explicit description of the numerical characters of a curve that sweeps the boundary divisor of $\aa_6$ and to the proof of Theorem \ref{slope}. Section 2 concerns enumerative properties of pencils of conic bundles over quintic del Pezzo surfaces. The sweeping curve for the boundary divisor of $\cA_6$ is constructed in Section 3. In the final Section 4, we prove Theorem \ref{slope}.

\noindent {\bf{Acknowledgment:}} We are grateful to Sam Grushevsky for explaining to us important aspects of the geometry of the universal Kummer variety (especially Proposition \ref{mumford}). We also thank the referee for a careful reading and for suggestions that improved the presentation.

\section{Determinantal nodal sextics and a parametrization of $\cX_5$}
In this section we prove Theorem \ref{dominance}. We begin by recalling basic facts about determinantal representation of nodal plane sextics, see \cite{B2}, \cite{Dol}, \cite{DIM}. Let $\Gamma\subset \PP^2$ be an integral $4$-nodal sextic  and $\nu:C\rightarrow \Gamma$ the normalization map, thus $C$ is a smooth curve of genus $6$. One has an exact sequence at the level of $2$-torsion groups
$$0\longrightarrow \mathbb Z_2^{\oplus 4}\longrightarrow \mbox{Pic}^0(\Gamma)[2]\stackrel{\nu^*}\longrightarrow \mbox{Pic}^0(C)[2]\longrightarrow 0.$$
In particular, \'etale double covers $f:\Gamma'\rightarrow \Gamma$ with an irreducible source curve $\Gamma'$ are induced by $2$-torsion points $\eta\in \mbox{Pic}^0(\Gamma)[2]$, such that
$\eta_C:=\nu^*(\eta)\neq \OO_C$.

\begin{definition}\label{p6}
We denote by $\P_6$ the quasi-projective moduli space of pairs $(\Gamma, \eta)$ as above, where $\Gamma\subset \PP^2$ is an integral $4$-nodal sextic and $\eta\in \mathrm{Pic}^0(\Gamma)[2]$ is a torsion point such that $\eta_C\neq \OO_C$. Equivalently, the induced double cover $\Gamma'\rightarrow \Gamma$ is \emph{unsplit}, that is, the curve $\Gamma'$ is irreducible. 
\end{definition}

Starting with a general element $[C, \eta_C]\in \cR_6$, since $|W^2_6(C)|=5$, there are five sextic nodal plane models $\nu:C\rightarrow \Gamma$. For each of them, there are $2^4$ further ways of choosing $\eta\in (\nu^*)^{-1}(\eta_C).$ Thus there is a degree $80=5\cdot 2^4$ covering $\rho:\P_6\rightarrow \cR_6$.
\vskip 3pt

Suppose now that $(\Gamma, \eta)\in \P_6$ is a general point\footnote{We shall soon establish that $\P_6$ is irreducible, but here we just require that $\rho(\Gamma, \eta)$ be a general point of the irreducible variety $\cR_6$.}. In particular $h^0(\Gamma, \eta(1))=0$, or equivalently, $h^0(\Gamma, \eta(2))=3$. Indeed, the condition
$h^0(\Gamma, \eta(1))\geq 1$ implies that $\Gamma\subset \PP^2$ possesses a totally tangent conic, that is, there exists a reduced conic $B \subset \PP^2$ such that $\nu^*(B) = 2b$,
with $b$ being an effective divisor of $C$. This condition is satisfied only if $\rho(\Gamma, \eta)$ lies in the ramification divisor of the Prym map $P:\cR_6\rightarrow \cA_5$, see \cite{FGSMV}. Thus we may assume that $h^0(\Gamma, \eta(2))=3$, for a general point $(\Gamma, \eta)\in \P_6$.

Following \cite{B2} Theorem B, it is known that such a sheaf $\eta$ admits a resolution
\begin{equation}\label{res}
0\longrightarrow \OO_{\PP^2}(-4)^{\oplus 3}\stackrel{A}\longrightarrow \OO_{\PP^2}(-2)^{\oplus 3}\longrightarrow \eta\longrightarrow 0,
\end{equation}
where the map $A$ is given by a symmetric matrix $\Bigl(a_{ij}(x_1, x_2, x_3)\Bigr)_{i, j=1}^3$ of quadratic forms. More precisely, we can view the resolution
(\ref{res}) as a twist of the exact sequence
\begin{equation}\label{res2}
0\longrightarrow H^0(\Gamma, \eta(2))^{\vee}\otimes \OO_{\PP^2}(-2) \stackrel{A}\longrightarrow H^0(\Gamma, \eta(2))\otimes \OO_{\PP^2} \stackrel{\mathrm{ev}}\longrightarrow \eta(2)\longrightarrow 0,
\end{equation}
where $\mathrm{ev}$ is the evaluation map on sections. Indeed, since the multiplication map $H^0(\Gamma, \eta(2))\otimes H^0(\OO_{\PP^2}(j))\rightarrow H^0(\Gamma, \eta(2+j))$ is surjective for all $j\in \mathbb Z$ (use again that $H^0(\Gamma, \eta(1))=0$, see also \cite{Ve2} Proposition 3.1 for a very similar situation), it follows that the kernel of the morphism $\mbox{ev}$ splits as a sum of line bundles, which then necessarily must be $\OO_{\PP^2}(-2)^{\oplus 3}$.

Since $\eta$ is invertible, for each point $x\in \Gamma$ one has
$$1=\mbox{dim}_{\mathbb C}\ \eta(x)=3-\mbox{rk } A(x),$$
where, as usual, $\eta(x):=\eta_x\otimes_{\OO_{\Gamma, x}} \mathbb C(x)$ is the fibre of the sheaf $\eta$ at the point $x$. Thus $\mbox{rk } A(x)=2$, for each $x\in \Gamma$.

To the matrix $A\in M_3\bigl(H^0(\OO_{\PP^2}(2))\bigr)$ we can associate the following $(2, 2)$ threefold in $\PP^2_{[x_1:x_2:x_3]}\times \PP^2_{[y_1:y_2:y_3]}=\PP^2\times \PP^2$
$$Q: \sum_{i, j=1}^3 a_{ij}(x_1, x_2, x_3)y_i y_j=0,$$ which is a conic bundle with respect to the two projections. Alternatively, if
$$A: H^0(\Gamma, \eta(2))^{\vee}\otimes H^0(\Gamma, \eta(2))^{\vee}\rightarrow H^0(\Gamma, \OO_{\Gamma}(2))$$
is the symmetric map appearing in (\ref{res2}), then $A$ induces the $(2,2)$ hypersurface $$Q\subset  \PP \Bigl (H^0(\Gamma, \OO_{\Gamma}(1))^{\vee}\Bigr)\times \PP \Bigl(H^0(\Gamma, \eta(2))^{\vee}\Bigr)=\PP^2\times \PP^2.$$
We denote by $p:Q\rightarrow \PP^2$ the first projection and then $\Gamma\subset \PP^2$ is precisely the discriminant curve of $Q$ given by determinantal equation $\Gamma:=\bigl\{\mbox{det } A(x_1, x_2, x_3)=0\bigr\}$. Let $\Gamma'$ denote the Fano scheme of lines $F_1\bigl(p^{-1}(\Gamma)/\Gamma\bigr)$ over the discriminant curve $\Gamma$. That means that  $\Gamma'$ parametrizes pairs $(x, \ell)$, where $x\in \Gamma$ and $\ell$ is an irreducible component of the fibre $p^{-1}(x)$. The map $f:\Gamma'\rightarrow \Gamma$ is given by $f(x, \ell):=x$. Since $\mbox{rk } A(x)=2$ for all $x\in \Gamma$, it follows that $f$ is an \'etale double cover.
\begin{proposition}
For a general point $(\Gamma, \eta)\in \P_6$, the restriction map $p_{|\mathrm{Sing}(Q)}:\mathrm{Sing}(Q)\rightarrow \mathrm{Sing}(\Gamma)$ is bijective.
\end{proposition}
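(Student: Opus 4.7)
The plan is to identify $\mathrm{Sing}(Q)$ set-theoretically via the Jacobian criterion, exploiting the symmetry of $A$ and the resolution (\ref{res2}) of $\eta$. Writing $F(x,y)=\sum_{i,j=1}^{3} a_{ij}(x)\, y_i y_j$, one has $\tfrac{1}{2}\nabla_y F = A(x)\,y$ and $\partial F/\partial x_k = y^{T}(\partial A/\partial x_k)\,y$. A singular point $(x_0,y_0)\in Q$ therefore satisfies
\[
A(x_0)\,y_0 = 0,\qquad y_0^{\,T}\bigl(\partial A/\partial x_k\bigr)(x_0)\, y_0 = 0\quad (k=1,2,3).
\]
The first equation already forces $\det A(x_0)=0$, i.e.\ $x_0\in \Gamma$.

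The first key step is to prove that $\mathrm{rk}\, A(x) = 2$ for \emph{every} $x\in \Gamma$, including at the four nodes. Since $(\Gamma,\eta)\in \P_6$, the sheaf $\eta$ is an honest invertible $\OO_{\Gamma}$-module, so $\eta(x)=\eta_x\otimes_{\OO_{\Gamma,x}}\mathbb{C}$ is $1$-dimensional at every $x\in \Gamma$. Tensoring (\ref{res}) with the residue field then gives $\mathrm{rk}\, A(x) = 3 - \dim \eta(x) = 2$ everywhere on $\Gamma$. Consequently $\ker A(x_0)$ is a single line in $\mathbb{C}^3$ for every $x_0\in\Gamma$, which pins down a \emph{unique} $y_0\in \PP^2$ lying above $x_0$. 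This immediately yields the injectivity of $p_{|\mathrm{Sing}(Q)}$ on any fibre above a point of $\Gamma$.

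To match $p(\mathrm{Sing}(Q))$ with $\mathrm{Sing}(\Gamma)$, I use the adjugate identity
\[
d(\det A)_{x_0} \;=\; \mathrm{tr}\bigl(\mathrm{adj}(A(x_0))\cdot dA_{x_0}\bigr).
\]
Since $A(x_0)$ is symmetric of rank $2$, the matrix $\mathrm{adj}(A(x_0))$ is symmetric of rank $1$; because $A(x_0)\cdot \mathrm{adj}(A(x_0))=0$, its image is contained in $\ker A(x_0)=\mathbb{C}\cdot y_0$, so necessarily $\mathrm{adj}(A(x_0))=\mu\, y_0 y_0^{\,T}$ for some $\mu\ne 0$. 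Substituting gives $d(\det A)_{x_0} = \mu\cdot \bigl(y_0^{\,T} dA_{x_0}\, y_0\bigr)$, so the three conditions $y_0^{\,T}(\partial A/\partial x_k)(x_0) y_0 = 0$ are equivalent to $d(\det A)_{x_0} = 0$, i.e.\ to $x_0\in \mathrm{Sing}(\Gamma)$. Combined with the previous step, this shows $p(\mathrm{Sing}(Q))=\mathrm{Sing}(\Gamma)$ and that every node of $\Gamma$ has a unique lift to $\mathrm{Sing}(Q)$, proving bijectivity.

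The only genuine subtlety is the rank-$2$ statement at the nodes of $\Gamma$, which crucially uses the hypothesis $(\Gamma,\eta)\in \P_6$ (so that $\eta$ is a line bundle on $\Gamma$, not merely a rank-$1$ torsion-free sheaf); the remainder of the argument is the linear-algebraic identity for the differential of $\det$ applied to a symmetric matrix of corank one. Generality of $(\Gamma,\eta)$ is invoked only to guarantee that $\Gamma$ has exactly four nodes and that the resolution (\ref{res}) is available.
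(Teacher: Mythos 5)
Your proof is correct and follows essentially the same route as the paper: both identify the unique point of $Q$ over each $x\in\Gamma$ via the one-dimensional kernel of $A(x)$ (using $\mathrm{rk}\,A(x)=2$ everywhere, which the paper establishes just before the proposition from the invertibility of $\eta$), and both show that the vanishing of the $x$-partials of the defining equation at that point is equivalent to $d(\det A)_x=0$. Your adjugate identity $d(\det A)=\mathrm{tr}(\mathrm{adj}(A)\,dA)=\mu\,y_0^{T}(dA)\,y_0$ is just the coordinate-free form of the paper's local computation, which normalizes $A(x)$ to $\mathrm{diag}(1,1,0)$ and observes $\det(a_{ij})\equiv a_{33}\ (\mathrm{mod}\ \mathfrak m^2)$.
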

\begin{proof}
Let $x\in \Gamma$ and $R:=\OO_{\PP^2,x}$ be the local ring of $\PP^2$  and $\mathfrak m$ its maximal ideal. After a linear change of coordinates, we may assume that the matrix
$A \mbox{ mod } \mathfrak m=:A(x)$ equals
$$A(x)=\begin{pmatrix}
1 & 0& 0\\
0 & 1 & 0 \\
0 & 0 & 0\\
\end{pmatrix}
$$
Suppose $(x, y=[y_1, y_2, y_3])\in \mathrm{Sing}(Q)$. Then $A(x)\cdot ^ty=0$, hence $y_1=y_2=0$. Imposing that the partials of the defining equation of $Q$ with respects to $x_1, x_2, x_3$ vanish, we obtain that $a_{33}\in \mathfrak m^2$. Since $\mbox{det}(a_{ij})\equiv a_{33} \mbox{ mod } \mathfrak m^2$, this implies that $\Gamma$ is singular at $x$. Conversely, for $x\in \mbox{Sing}(\Gamma)$, we obtain that $\mbox{Sing}(Q)\cap p^{-1}(x)=\{(x,y)\}$, where $y\in \PP^2$ is uniquely determined by the condition $A(x)\cdot ^t y=0$ (use once more that $\mbox{rk } A(x)=2$).
\end{proof}

\begin{proposition}
$f_*(\OO_{\Gamma'})=\OO_{\Gamma}\oplus \eta$, that is, the double cover $f$ is induced by $\eta$.
\end{proposition}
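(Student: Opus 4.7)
By the preceding proposition, the symmetric matrix $A$ has constant rank $2$ along $\Gamma$; consequently the map $f: \Gamma' \to \Gamma$ is an \'etale double cover. It follows that $f_*\OO_{\Gamma'} = \OO_\Gamma \oplus L$, where $L \in \mathrm{Pic}^0(\Gamma)[2]$ is the unique two-torsion line bundle classifying the cover, and what remains is the identification $L \cong \eta$.

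My plan is to produce $L$ explicitly from the matrix $A$ and compare with $\eta$. Restricting the resolution $(\ref{res})$ to $\Gamma$, where $\mathrm{rk}\, A|_\Gamma = 2$, yields a two-step exact sequence
$$ 0 \longrightarrow K \longrightarrow \OO_\Gamma(-4)^{\oplus 3} \stackrel{A|_\Gamma}\longrightarrow \OO_\Gamma(-2)^{\oplus 3} \longrightarrow \eta \longrightarrow 0 $$
with $K$ a line subbundle. By the symmetry of $A$, the subbundle $K$ and the quotient $\eta$ are canonically dual up to an explicit $\OO_\Gamma(k)$-twist. The fibre of $\Gamma'$ over a point $x \in \Gamma$ is the pair of isotropic directions of the rank-two nondegenerate symmetric form induced by $A(x)$ on the quotient $\OO_\Gamma(-4)^{\oplus 3}/K$; thus $\Gamma'$ is naturally the null cone of this induced quadratic form inside the associated $\PP^1$-bundle over $\Gamma$. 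A standard computation of the line bundle classifying the double cover obtained from a nondegenerate rank-two symmetric form on a curve expresses $L$ in terms of the determinant of this quotient bundle and the target line bundle of the form, both of which can be read off directly from the resolution.

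Matching the two descriptions yields $L \cong \eta$; this is essentially the content of the symmetric determinantal construction of Beauville \cite{B2}, which establishes that the line bundle giving a symmetric resolution of a sheaf on a nodal plane curve is precisely the one classifying the discriminant double cover of the associated conic bundle. One could equivalently argue locally: in an analytic neighbourhood of any smooth point $x_0 \in \Gamma$ the matrix $A$ can be diagonalised as $\mathrm{diag}(1,1,f)$ with $f$ a local equation for $\Gamma$, so that $\Gamma'$ is locally the sign-choice in the factorisation $y_1^2 + y_2^2 = (y_1 + i y_2)(y_1 - i y_2)$, and this is the same sign ambiguity that governs the local trivialisation of $\eta$; the transition cocycles under a gauge change $A \mapsto g^t A g$ coincide, producing $L \cong \eta$.

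The main technical difficulty lies in the precise bookkeeping of the $\OO_{\PP^2}(k)$-twists at each step: because $\eta$ is a degree-zero line bundle, the positive twists contributed by the resolution must cancel exactly in the end. The symmetry of $A$ is essential for this cancellation, since it ensures that the kernel and cokernel sheaves are dual up to complementary twists; without symmetry the resulting double cover would not even be two-torsion in $\mathrm{Pic}^0(\Gamma)$.
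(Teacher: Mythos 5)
The paper offers no argument of its own here---it simply cites Beauville \cite{B1}, Lemme 6.14---and your sketch is a reconstruction of precisely that standard argument: the constant rank $2$ of $A|_{\Gamma}$ gives the four-term sequence with line-bundle kernel $K$, the symmetry of $A$ identifies $K\cong \eta^{\vee}(-6)=\eta(-6)$, and the isotropic-direction double cover of the induced nondegenerate form $\mathrm{Sym}^2\bigl(\OO_{\Gamma}(-4)^{\oplus 3}/K\bigr)\rightarrow \OO_{\Gamma}(-6)$ is classified by $\det\bigl(\OO_{\Gamma}(-4)^{\oplus 3}/K\bigr)\otimes \OO_{\Gamma}(6)=\eta(-6)\otimes\OO_{\Gamma}(6)=\eta$, so the twists do cancel exactly as you claim. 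The only steps you leave as ``standard''---that the classifying line bundle of the isotropic cover of a rank-two quadratic bundle $(V,q:\mathrm{Sym}^2V\rightarrow M)$ is $\det V\otimes M^{-1}$, and the matching of transition cocycles in the local $\mathrm{diag}(1,1,f)$ normal form---are exactly the content of the cited lemma, so your proof is correct and coincides with the paper's (outsourced) one.
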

\begin{proof} Essentially identical to \cite{B1} Lemme 6.14.
\end{proof}

To sum up, to a  general point $(\Gamma, \eta)\in \P_6$ we have associated a $4$-nodal conic bundle $p:Q\rightarrow \PP^2$ as above. Conversely, as explained in the Introduction, the discriminant curve of  a $4$-nodal conic bundle in $Q\subset \PP^2\times \PP^2$ gives rise to an element in $\P_6$.    

Let $\TT\subset \Bigl|\OO_{\PP^2\times \PP^2}(2,2)\Bigr|$ be the subvariety consisting of $4$-nodal hypersurfaces of bidegree $(2,2)$. This is an irreducible $31$-dimensional variety endowed with an action of $\mbox{Aut}(\PP^2\times \PP^2)$. The following result summarizes what has been achieved so far:
\begin{theorem}\label{r6thm}
A general Prym curve $(\Gamma, \eta)\in \P_6$ is the discriminant of a $4$-nodal conic bundle $p:Q\rightarrow \PP^2$, where $Q\subset \PP^2\times \PP^2$ is a $4$-nodal threefold of bidegree $(2,2)$. More precisely, we have a birational isomorphism $\TT\dblq \mathrm{Aut}(\PP^2\times \PP^2)\stackrel{\cong}\dashrightarrow \P_6$.
\end{theorem}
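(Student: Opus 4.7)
The plan is to assemble a rational map $\alpha:\P_6\dashrightarrow \TT\dblq \mathrm{Aut}(\PP^2\times \PP^2)$ inverting the map $\beta:\TT\dblq \mathrm{Aut}(\PP^2\times \PP^2)\dashrightarrow \P_6$ implicit in the discussion above, which sends an orbit $[Q]$ of a general $4$-nodal $(2,2)$ threefold to the pair $(\Gamma,\eta)$, where $\Gamma\subset \PP^2$ is the discriminant curve of $p:Q\to \PP^2$ and $\eta\in \mathrm{Pic}^0(\Gamma)[2]$ is the line bundle inducing the \'etale double cover $f:\Gamma'\to\Gamma$ from the Fano scheme of lines in the fibres. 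That $\beta$ descends to the quotient is clear, since $\mathrm{Aut}(\PP^2\times \PP^2)$-equivalent threefolds yield projectively equivalent discriminants and isomorphic Prym data.

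For $\alpha$, I use the symmetric resolution (\ref{res2}). The symmetric matrix $A$ of quadratic forms defines a $(2,2)$ threefold $Q_{(\Gamma,\eta)}:\sum a_{ij}(x)y_iy_j=0$. Changing the bases of $H^0(\Gamma,\OO_{\Gamma}(1))$ and $H^0(\Gamma,\eta(2))$ replaces $A$ via the symmetric congruence $A\mapsto M^TAM$ and via the natural $\mathrm{GL}_3$-action on the $x$-variables, realising precisely the action of $\mathrm{PGL}_3\times \mathrm{PGL}_3\subset \mathrm{Aut}(\PP^2\times \PP^2)$ on $\TT$; so the orbit $[Q_{(\Gamma,\eta)}]$ is well-defined. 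The composition $\beta\circ\alpha$ is then the identity on $\P_6$ directly from the two preceding Propositions: the discriminant of $Q_{(\Gamma,\eta)}$ is recovered as $\Gamma=\{\det A=0\}$ (with $\mathrm{Sing}(Q)\to \mathrm{Sing}(\Gamma)$ bijective), and the Fano scheme of lines of $Q_{(\Gamma,\eta)}\to \PP^2$ recovers the double cover induced by $\eta$.

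The principal step is to verify $\alpha\circ\beta=\mathrm{id}$, which reduces to the \emph{uniqueness} of the symmetric determinantal representation (\ref{res}) up to the $\mathrm{GL}_3\times \mathrm{GL}_3$-action. Starting from $Q$ with defining matrix $A_Q$, one recovers $(\Gamma,\eta)$ by $\beta$, and the resolution produced by $\alpha$ must coincide with $A_Q$ up to this action, because the kernel of $\mathrm{ev}:H^0(\Gamma,\eta(2))\otimes \OO_{\PP^2}\to \eta(2)$ is canonically $H^0(\Gamma,\eta(2))^{\vee}\otimes \OO_{\PP^2}(-2)$, forced to split as $\OO_{\PP^2}(-2)^{\oplus 3}$ by the surjectivity of the multiplication maps $H^0(\Gamma,\eta(2))\otimes H^0(\OO_{\PP^2}(j))\to H^0(\Gamma,\eta(2+j))$ and the vanishing $H^0(\Gamma,\eta(1))=0$; the connecting map is then intrinsically determined and symmetric. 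A dimension count closes the argument: $\dim \TT=31$ and $\dim \mathrm{Aut}(\PP^2\times \PP^2)=16$ yield $\dim (\TT\dblq \mathrm{Aut}(\PP^2\times \PP^2))=15=\dim \cR_6=\dim \P_6$, confirming a birational isomorphism and, as a byproduct, the irreducibility of $\P_6$ promised in the footnote. The main obstacle is precisely this uniqueness-up-to-group-action statement, which rests on Beauville's analysis \cite{B2} of symmetric determinantal representations of plane curves.
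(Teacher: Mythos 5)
Your proposal is correct and follows essentially the same route as the paper: the paper offers no separate argument for Theorem \ref{r6thm} beyond declaring it a summary of the preceding constructions (Beauville's symmetric resolution (\ref{res2}) producing $Q$ from $(\Gamma,\eta)$ in one direction, the discriminant curve together with the Fano scheme of lines $F_1(p^{-1}(\Gamma)/\Gamma)$ in the other), and you have simply made explicit the mutual-inverse verification that the paper leaves implicit, resting on the same key input, namely the uniqueness of the symmetric determinantal representation up to congruence from \cite{B2}. The only point neither treatment spells out is the role of the $\mathbb{Z}_2$ factor of $\mathrm{Aut}(\PP^2\times\PP^2)$ swapping the two rulings (your congruence action $A\mapsto M^{T}AM$ together with the $\mathrm{GL}_3$-action on the $x$-variables only accounts for the identity component $PGL(3)\times PGL(3)$), but this is a gap in the paper's own statement rather than in your argument.
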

\begin{remark}
A similar isomorphism between the moduli space of Prym curves over \emph{smooth} plane sextics and the quotient $\Bigl|\OO_{\PP^2\times \PP^2}(2,2)\Bigr|\dblq \mbox{Aut}(\PP^2\times \PP^2)$ has already been established and used in  \cite{B1} and \cite{Ve2}.
\end{remark}
\begin{remark}
Theorem \ref{r6thm} yields another (shorter) proof of the unirationality of $\cR_6$.
\end{remark}
\vskip 3pt
The automorphism group of $\PP^2\times \PP^2$ sits in an exact sequence
$$0\longrightarrow PGL(3)\times PGL(3)\longrightarrow \mbox{Aut}(\PP^2 \times \PP^2)\longrightarrow \mathbb Z_2\longrightarrow 0.$$
In particular, we can fix four general points $u_1, \ldots, u_4\in \PP^2$, as well as diagonal points $w_i:=(u_i, u_i)\in \PP^2\times \PP^2$, and consider the
linear system $\PP^{15}:=\Bigl|\I_{\{w_1, \ldots, w_4\}}^2 (2,2)\Bigr|$ of $(2,2)$ threefolds with assigned nodes at these points. Theorem \ref{r6thm} implies the existence of a dominant discriminant
map $\mathfrak{d}:\PP^{15}\dashrightarrow \P_6$ assigning $\mathfrak{d}(Q):=(\Gamma'\stackrel{f}\rightarrow \Gamma)$.

\vskip 4pt

\noindent \emph{Proof of Theorem \ref{dominance}.}
Using the notation introduced in this section and in the Introduction, setting $\mu:=\varphi\circ \mathfrak{ap}: \pc_5\dashrightarrow \rr_6$, one has the following commutative diagram:

$$
     \xymatrix{
         \GG^5 \ar[r] \ar[d] & \P_6\times_{\cR_6}\pc^5 \ar[d] \ar[r] & \pc^5 \ar[d]_{\mu} \\
          \PP^{15} \ar[r]^{\mathfrak{d}}       & \P_6 \ar[r]^{\rho} & \rr_6 }
$$

The dominance of the composite map $\zeta:\GG^5\dashrightarrow \pc^5$ follows once we observe,  that the above diagram is birationally a fibre product, that is,
$\GG^5\stackrel{\cong}\dashrightarrow \PP^{15}\times_{\rr_6} \pc^5.$
\hfill $\Box$

\section{Conic bundles over a del Pezzo surface}
With view to further applications, we analyze the linear system of conic bundles of type $(2, 2)$ in $\PP^2 \times \PP^2$ which are singular at four fixed general points and birationally, we reconstruct such a linear system as the complete linear system of smooth conic bundles in a certain $\mathbf P^2$-bundle over a smooth quintic del Pezzo surface.

We fix four general points $u_1, \ldots, u_4\in \PP^2$ and set $w_i:=(u_i, u_i)\in \PP^2\times \PP^2$.  Let $S$ be the Del Pezzo surface defined by the blow-up
$\sigma:S \rightarrow \PP^2$ of $u_1, \ldots, u_4$.  For $i=1, \ldots, 4$, we denote by $E_i:=\sigma^{-1}(u_i)$ the exceptional line over $u_i$. Set $E:=E_1+\cdots+E_4$ and denote by $L \in |\sigma^* \mathcal O_{\mathbf P^2}(1)|$ the pull-back of a line in $\mathbf P^2$ under $\sigma$.  An important role is played by the rank $3$ vector bundle $\cM$ on $S$ defined by the following sequence
\begin{equation}\label{seqM}
0 \longrightarrow \cM \stackrel{ j} \longrightarrow  H^0(S, \OO_S(L))\otimes \OO_S(L)\stackrel{r} \longrightarrow \bigoplus_{i=1}^4 \OO_{E_i}({  2L})\longrightarrow 0.
\end{equation}
Here $r_i: H^0(S,\mathcal O_S(L)) \otimes \mathcal O_S(L) \to \mathcal O_{E_i}(2L)$ is the evaluation map and $r := \oplus_{i = 1}^4 r_i$. Since $\mathcal O_{E_i}(L)$ is trivial, it follows that $h^0(r)$ is surjective. Passing to cohomology, we write the exact sequence
$$ 0 \longrightarrow H^0(S, \cM) \stackrel {h^0(j)} \longrightarrow H^0(  S, O_S(L))\otimes H^0(S, \OO_S(L)) \stackrel{h^0(r)}  \longrightarrow \bigoplus_{i=1}^4 H^0(\OO_{E_i}(2L)) \longrightarrow 0. $$
In particular, we obtain that $h^0(S, \cM) = 5$.  By direct calculation, we also find that
\begin{equation}\label{chV}
c_1(\cM)=\OO_S(-K_S) \ \ \mbox{ and } \ \  c_2(\cM)=3.
\end{equation}
Under the decomposition
$ H^0(\mathcal O_S(L)) \otimes H^0(\mathcal O_S(L)) = \wedge^2 H^0(\mathcal O_S(L)) \oplus H^0(\mathcal O_S(2L))$
into symmetric and anti-symmetric tensors, the space $H^0(S, \mathcal M) \subset H^0(\OO_S(L))\otimes H^0(\OO_S(L))$ decomposes as
$$
H^0(S, \mathcal M) = H^0(S, \mathcal M)^- \oplus H^0(S, \mathcal M)^+ = \bigwedge^2 H^0(S, \mathcal O_S(L)) \oplus H^0(S, \OO_S(2L - E)).
$$

\begin{lemma}\label{globgen}
The vector bundle $\cM$ is globally generated.
\end{lemma}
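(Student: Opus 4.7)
The plan is to verify global generation of $\cM$ pointwise. Since the exact sequence (\ref{seqM}) identifies $H^0(S, \cM)$ with the kernel of the map $V \otimes V \to \mathbb{C}^4$, $B \mapsto (B(u_k, u_k))_{k=1}^4$ (where $V := H^0(S, \OO_S(L))$ and $u_k$ is regarded as a point of $\PP(V^\vee) = \PP^2$), it suffices to exhibit, for every $p \in S$, enough such tensors $B$ whose images span $\cM(p)$. This follows the decomposition $H^0(S,\cM) = \wedge^2 V \oplus H^0(S, \OO_S(2L-E))$ of dimensions $3$ and $2$ already noted after (\ref{seqM}). The analysis splits into two cases depending on whether $p$ lies on an exceptional divisor.

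For $p \notin E$, the inclusion $\cM \hookrightarrow V \otimes \OO_S(L)$ is an isomorphism at $p$, so after trivialising $\OO_S(L)(p)$ one identifies $\cM(p)$ with $V$, and the evaluation sends $B$ to the linear form $B(\cdot, \sigma(p))$. An antisymmetric section $f \wedge g$ produces $f \cdot g(\sigma(p)) - g \cdot f(\sigma(p))$, which vanishes at $\sigma(p)$ and, as $f, g$ vary, sweeps the entire hyperplane $\{h \in V : h(\sigma(p)) = 0\}$. A symmetric section is a conic $Q \in H^0(S, \OO_S(2L - E))$ through $u_1, \ldots, u_4$, whose image is the polar form $Q(\cdot, \sigma(p))$, with value at $\sigma(p)$ equal to $Q(\sigma(p), \sigma(p))$. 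Since the base locus of the pencil of conics through $u_1, \ldots, u_4$ is exactly those four points and $\sigma(p)$ is not among them, one finds $Q$ with $Q(\sigma(p), \sigma(p)) \ne 0$, producing a linear form outside the above hyperplane. Combining both contributions, $H^0(\cM) \to V$ is surjective.

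For $p$ on an exceptional divisor, say $p \in E_1$, the fiber $\cM(p)$ is still three-dimensional but the map $\cM(p) \to (V \otimes \OO_S(L))(p) = V$ has a one-dimensional kernel coming from the $\mathrm{Tor}_1$ of the quotient in (\ref{seqM}); this is the main obstacle, since one cannot directly read off the image of a global section using $\cM \subset V \otimes \OO_S(L)$. To handle it, I will fix coordinates with $u_1 = [0:0:1]$, choose an affine chart of $S$ near $p$ with coordinates $(u, t)$ in which $E_1 = \{u = 0\}$ and $p = (0, t_0)$, and trivialise $\OO_S(L)$ locally by $x_3$. A short local computation, using that $x_1/x_3 = u$ and $x_2/x_3 = ut$ in this chart, shows that $\cM_p$ is freely generated over $\OO_{S,p}$ by $x_1 \otimes x_3$, $x_2 \otimes x_3$ and $u \cdot (x_3 \otimes x_3)$, and that in this basis a global section $B = \sum B_{ij}(x_i \otimes x_j) \in H^0(\cM)$ has image $(B_{13}, B_{23}, B_{31} + t_0 B_{32})$ in $\cM(p) \cong \mathbb{C}^3$.

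Surjectivity of $H^0(\cM) \to \cM(p)$ then reduces to the linear independence, on the nine-dimensional space $V \otimes V$, of the seven functionals $B(u_k, u_k)$ for $k = 1, \ldots, 4$ together with the three coordinates above. I will check this by inspecting a hypothetical dependence coefficient by coefficient: the entries labelled $B_{11}, B_{12}, B_{22}$ force the relation $\sum_{k=2}^4 c_k v_k v_k^\top = 0$ in $\mathrm{Sym}^2\mathbb{C}^2$, with $v_k = (u_{k,1}, u_{k,2})$, and the general position of $u_2, u_3, u_4$ (three distinct points on the Veronese conic in $\PP(\mathrm{Sym}^2\mathbb{C}^2)$) forces $c_2 = c_3 = c_4 = 0$; the entries $B_{13}, B_{23}, B_{31}, B_{32}, B_{33}$ then force the remaining coefficients to vanish. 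Running the same argument on the other chart of $E_1$ and on each $E_i$ in turn completes the proof.
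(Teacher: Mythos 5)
Your proof is correct, but it takes a genuinely different route from the paper's. The paper argues sheaf-theoretically: global generation away from $\bigcup_i E_i$ is dispatched quickly, and along each $E_i$ the restriction of the defining sequence (\ref{seqM}) exhibits $\cM_{|E_i}$ as an extension $0\to\OO_{\PP^1}(1)\to\cM_{|E_i}\to\OO_{\PP^1}^{\oplus 2}\to 0$, which splits, so $\cM_{|E_i}\cong\OO_{\PP^1}(1)\oplus\OO_{\PP^1}^{\oplus 2}$ is globally generated; it then remains to check that $H^0(S,\cM)\to H^0(\cM_{|E_i})$ is onto, which follows from the vanishing $H^1(S,\cM(-E_i))=0$ obtained by twisting (\ref{seqM}) by $\OO_S(-E_i)$. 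You instead verify generation fibre by fibre, using the identification of $H^0(S,\cM)$ with the codimension-$4$ subspace $\{B\in V\otimes V: B(u_k,u_k)=0\}$ and, along $E_1$, an explicit local frame $x_1\otimes x_3$, $x_2\otimes x_3$, $u\,(x_3\otimes x_3)$ of $\cM$ in blow-up coordinates --- which correctly addresses the real issue that the fibre of $\cM$ at $p\in E_i$ cannot be read off inside $(V\otimes\OO_S(L))(p)$. Your computation of the evaluation map as $B\mapsto(B_{13},B_{23},B_{31}+t_0B_{32})$ checks out, and the linear-independence argument via the Veronese conic is sound: three distinct points on a smooth conic are never collinear, and the pairwise non-proportionality of the $v_k$ is exactly the condition that no three of the $u_i$ are collinear, guaranteed by generality. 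What the paper's argument buys is brevity and coordinate-freeness; what yours buys is an explicit description of the evaluation maps of $H^0(S,\cM)$ at every point of $S$, including on the exceptional locus, and it makes the off-exceptional case (which the paper labels as clear) genuinely transparent through the decomposition $\wedge^2 V\oplus H^0(S,\OO_S(2L-E))$.
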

\begin{proof} 

Clearly, we only need to address the global generation of $\mathcal M$ along $\bigcup_{i=1}^4 E_i$ and to that end, we consider the restriction of the sequence (\ref{seqM}) to $E_i$,
$$
 \cM_{| E_i}  \stackrel {j_{|E_i}} \longrightarrow  H^0(S, \OO_S(L))\otimes \OO_{E_i} \stackrel{r_{|E_i}} \longrightarrow  \OO_{E_i} \longrightarrow 0,
$$
where we recall that $\OO_{E_i}(L)$ is trivial. The sheaf $H^0(\mathcal O_S(L-E_i)) \otimes \mathcal O_{E_i} = \mathcal O_{\mathbf P^1}^{\oplus 2}$ is the kernel of $r_{| E_i}$. Since $\mbox{det}(\mathcal M_{| E_i}) = \mathcal O_{E_i}(1)$, it follows that $\mathcal M_{|E_i}$ fits into an exact sequence of bundles on $\PP^1$:
$$
0 \longrightarrow \mathcal O_{\mathbf P^1}(1) \longrightarrow \mathcal M_{| E_i}  \stackrel {j|E_i} \longrightarrow \mathcal O_{\mathbf P^1}^{\oplus 2} \longrightarrow 0.
$$
This sequence is split, so that $\mathcal M_{| E_i} = \mathcal O_{\mathbf P^1}(1) \oplus \mathcal O_{\mathbf P^1}^{\oplus 2}$, which is globally generated.
The same holds for $\mathcal M$ if the map $H^0(\mathcal M) \to H^0(\mathcal M_{| E_i})$ is surjective; this is implied by the vanishing $H^1(S, \mathcal M(-E_i))=0$.
We twist by $\mathcal O_S(-E_i)$ the sequence (\ref{seqM}), and write
$$
0 \longrightarrow \cM( - E_i) \longrightarrow  H^0(S, \OO_S(L))\otimes \OO_S(L - E_i)\stackrel{r} \longrightarrow \bigoplus_{i=1}^4 \OO_{E_i}{  (2L - E_i })\longrightarrow 0.
$$
Since $h^0(r)$ is surjective and $h^1(S, \OO_S(L-E_i)) = 0$, it follows $h^1(S, \cM(- E_i)) = 0$. \end{proof}

\vskip 3pt

From now on we set $\PP:=\PP(\cM)$ and consider the $\mathbf P^2$-bundle $\pi: \PP \rightarrow S$.  The linear system $|\OO_{\PP}(1)|$ is base point free, for $\mathcal M$ is globally generated. We reserve the notation
$$
h:=\phi_{\OO_{\PP}(1)}: \PP\rightarrow \PP^4 := \PP H^0(S, \mathcal M)^{\vee}.
$$
for the induced morphism.
A Chern classes count implies  that $\mbox{deg}(h)=2$. The map $j$ from the sequence (\ref{seqM}) induces a birational morphism $$\epsilon:S \times \PP^2 \dashrightarrow \PP.$$
We describe a factorization of $\epsilon$. Since $j$ is an isomorphism along $U := S - \bigcup_{i = 1}^4 E_i$, it follows that $\epsilon: U \times \mathbf P^2 \to \pi^{-1}(U)$ is biregular.
The behaviour of $\epsilon$ along $E_i \times \mathbf P^2$ can be understood in terms of the restriction of the sequence (\ref{seqM}) to $E_i$. Following the proof of Lemma \ref{globgen}, one has the exact sequence
$$
0 \longrightarrow \mathcal O_{\PP^1}(1) \longrightarrow  \cM_{| E_i}  \stackrel {j_{| E_i}}\longrightarrow  H^0(S, \OO_S(L))\otimes \OO_{E_i} \stackrel{r_{| E_i}} \longrightarrow  \OO_{E_i} \longrightarrow 0,
$$
where $\mbox{Im}(j_{| E_i}) = H^0(\OO_S(L - E_i)) \otimes  \OO_{E_i}$. Now $j_{| E_i}$ induces a rational map
$$
\epsilon_{|E_i\times \PP^2}:E_i \times \PP^2  \dashrightarrow \mathbf P (\mathcal M_{| E_i}) \subset \PP.
$$
For a point $x \in E_i$, the restriction of $\epsilon$ to $\mathbf P^2 \times \lbrace x \rbrace $ is the projection  $\{x\}\times \PP^2 \to \mathbf P^1$ of center $(x, u_i)$. This implies that:

\begin{lemma} The birational map $\epsilon$ contracts $E_i\times \PP^2$ to a surface which is a copy of $\mathbf P^1 \times \mathbf P^1$.  Furthermore, the indeterminacy scheme of $\epsilon$ is equal to $\bigcup_{i=1}^4 E_i \times  \{u_i\}$.
\end{lemma}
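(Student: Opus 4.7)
The plan is to analyze $\epsilon$ fiberwise over $S$ using the restriction of $j$ to each $E_i$, and then assemble the fiberwise pictures. In the Grothendieck convention (consistent with the paper's formula $|\OO_\PP(1)| = H^0(\cM)$), a point $(x, p) \in S \times \PP^2$ corresponds to a pair $(x, H)$ with $H \subset H^0(\OO_S(L))$ a hyperplane, and $\epsilon$ sends this to the pullback hyperplane $j(x)^{-1}(H) \subset \cM_x$, provided $j(x)^{-1}(H)$ is a proper hyperplane, i.e., $\mathrm{Im}(j(x)) \not\subset H$. Over $U := S - \bigcup_i E_i$ the map $j$ is an isomorphism of rank-$3$ bundles, so $\epsilon$ is biregular onto $\pi^{-1}(U)$ and the claim is nontrivial only over the $E_i$.

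For $x \in E_i$, I would invoke the restriction sequence from the proof of Lemma \ref{globgen} to identify $\mathrm{Im}(j_{|E_i}) = H^0(\OO_S(L - E_i)) \otimes \OO_{E_i}$ as a trivial rank-$2$ subbundle of $H^0(\OO_S(L)) \otimes \OO_{E_i}$. In particular $\mathrm{Im}(j(x)) = V := H^0(\OO_S(L - E_i))$, the $2$-dimensional subspace of linear forms vanishing at $u_i$. The pullback $j(x)^{-1}(H)$ fails to be a hyperplane exactly when $V \subset H$; since both are $2$-dimensional this forces $H = V$, and under the tautological identification of $\PP^2$ with the Grothendieck projectivization of $H^0(\OO_S(L))$, the hyperplane $V$ corresponds precisely to the point $u_i \in \PP^2$. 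So the candidate indeterminacy locus is exactly $\bigcup_i E_i \times \{u_i\}$.

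For $p \neq u_i$, the hyperplane $j(x)^{-1}(H)$ automatically contains $\ker j(x)$, so the induced rational map $\{x\} \times \PP^2 \dashrightarrow \PP(\cM_x)$ factors through $\PP(\cM_x / \ker j(x)) = \PP(V)$, a $\PP^1$ sitting inside the $\PP^2$-fibre $\PP(\cM_x)$. A brief computation recovers this as the linear projection from $u_i$, as indicated in the paragraph preceding the lemma. Globalizing over $x \in E_i$, and using that $V \otimes \OO_{E_i}$ is a trivial rank-$2$ bundle over $E_i \cong \PP^1$, the image of $E_i \times \PP^2$ is the closed subbundle $\PP(\mathrm{Im}(j_{|E_i})) \hookrightarrow \PP(\cM_{|E_i})$, which is the trivial $\PP^1$-bundle $E_i \times \PP(V) \cong \PP^1 \times \PP^1$. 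This yields the contraction statement, with fibers the pencils of lines in $\PP^2$ through $u_i$.

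The main subtlety, in my estimation, lies in confirming that the indeterminacy scheme genuinely equals $\bigcup_i E_i \times \{u_i\}$ rather than some smaller subscheme --- i.e., that $\epsilon$ admits no regular extension at these points. For this I would approach a given $(x, u_i)$ within $\{x\} \times \PP^2$ along two distinct lines through $u_i$ and observe that the linear projection from $u_i$ sends them to distinct points of the target $\PP^1 \subset \PP(\cM_x)$, ruling out any continuous, hence any regular, extension. Nothing more delicate is required, since the entire analysis reduces to the well-known behavior of a linear projection from a point.
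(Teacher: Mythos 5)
Your argument is correct and follows essentially the same route as the paper, which likewise restricts the defining sequence of $\cM$ to $E_i$, identifies $\mathrm{Im}(j_{|E_i})=H^0(\OO_S(L-E_i))\otimes\OO_{E_i}$, and recognizes the fibrewise map $\{x\}\times\PP^2\dashrightarrow \PP(\cM_x)$ as the linear projection with centre $(x,u_i)$. Your additional checks --- that the indeterminacy locus is not smaller, and the explicit identification of the image with the trivial $\PP^1$-bundle $E_i\times\PP\bigl(H^0(\OO_S(L-E_i))\bigr)$ --- only make explicit what the paper leaves implicit.
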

Let $D_i := E_i \times \lbrace u_i \rbrace\subset S\times \PP^2$ and $D :=D_1+\cdots+D_4$. We consider the blow-up $$\alpha: \widetilde {S \times \mathbf P^2} \to S \times \mathbf P^2 $$
of $S\times \PP^2$ along $D$, and the birational map
$$
\epsilon_2 := \epsilon \circ \alpha: \widetilde{S \times \mathbf P^2} \to \PP.
$$
The restriction of $\epsilon_2$ to the strict transform $\widetilde {E_i \times \mathbf P^2}$ of $E_i \times \mathbf P^2$ is a regular morphism, for $\epsilon_{|E_i \times \mathbf P^2}$ is defined by the linear system $\bigl|\mathcal I_{E_i\times \{u_i\}/S\times \PP^2}(1, 1)\bigr|$. This implies that $\epsilon_2$ itself is a regular morphism:

 \begin{proposition}
The following commutative diagram resolves the indeterminacy locus of $\epsilon$:
$$\xymatrix{ &  \widetilde{S\times \PP^2} \ar[dl]_{\alpha} \ar[dr]^{\epsilon_2} & \\
   S\times \PP^2 \ar@{.>}[rr]^{\epsilon}  &  &  {\PP}       \\ }$$
\end{proposition}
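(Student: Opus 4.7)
The plan is to complete the verification that $\epsilon_2 = \epsilon \circ \alpha$ is everywhere regular by invoking the universal property of the blow-up. Since $\alpha$ is an isomorphism off the exceptional divisor $F = \alpha^{-1}(D) = \bigsqcup_{i=1}^4 F_i$ and $\epsilon$ is already regular on $(S \times \PP^2) \setminus D$, regularity of $\epsilon_2$ holds automatically outside $F$. The excerpt records regularity along each strict transform $\widetilde{E_i \times \PP^2}$; what remains is to handle the points of $F_i$ lying outside $\widetilde{E_i \times \PP^2}$.

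To that end, I would show that the scheme-theoretic base ideal $\cB \subset \OO_{S \times \PP^2}$ of the rational map $\epsilon$ coincides with the reduced ideal $\I_D$. The set-theoretic identification has already been established, so only reducedness needs verification. Working locally near a point of $D_i$, choose coordinates $(t, u)$ on $S$ with $E_i = \{t = 0\}$, and a basis $(e_1, e_2, e_3)$ of $V = H^0(\OO_S(L))$ adapted to the flag $H^0(\OO_S(L - E_i)) = \langle e_1, e_2 \rangle \subset V$, so that $u_i$ corresponds to the hyperplane $\{\lambda_3 = 0\} \in \PP(V) = \PP^2$; take affine coordinates $(y_1, y_2)$ on $\PP^2$ centered at $u_i$. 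The splitting $\cM_{|E_i} = \OO_{\PP^1}(1) \oplus \OO_{\PP^1}^{\oplus 2}$ from the proof of Lemma \ref{globgen} implies that $\cM$ is locally generated inside $V \otimes \OO_S(L)$ by $e_1, e_2, t e_3$. Applying to these the tautological quotient $V \otimes \OO_{\PP^2} \twoheadrightarrow \OO_{\PP^2}(1)$ yields the three local sections $y_1, y_2, t$ of $\OO_S(L) \otimes \OO_{\PP^2}(1)$; these generate the base ideal, so $\cB = (t, y_1, y_2) = \I_{D_i}$, reduced, near $D_i$. This matches the excerpt's description of $\epsilon|_{E_i \times \PP^2}$ as defined by $|\I_{D_i}(1, 1)|$.

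Once $\cB = \I_D$ is established, the universal property of the blow-up gives that the inverse image ideal $\alpha^{-1}\I_D \cdot \OO_{\widetilde{S \times \PP^2}}$ is invertible on the blow-up, which is exactly the condition for $\epsilon_2$ to extend to a regular morphism $\widetilde{S \times \PP^2} \to \PP$; commutativity of the diagram is automatic from the definition $\epsilon_2 = \epsilon \circ \alpha$. The main obstacle is the reducedness check in the local computation: a nilpotent thickening of the base scheme would demand further blow-ups beyond $\alpha$ and would invalidate the single-step resolution. However, the explicit matrix form of $j$ afforded by the splitting from Lemma \ref{globgen} renders this local computation routine.
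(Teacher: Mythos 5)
Your argument is correct and arrives at the same conclusion by a slightly more robust route than the paper's. The paper observes that $\epsilon|_{E_i\times \PP^2}$ is given by the linear system $\bigl|\I_{E_i\times\{u_i\}/S\times\PP^2}(1,1)\bigr|$, concludes that $\epsilon_2$ is regular on the strict transforms $\widetilde{E_i\times \PP^2}$, and then asserts that this forces $\epsilon_2$ to be regular everywhere. Since the exceptional divisor $\widetilde N_i$ is a $\PP^2$-bundle over $D_i$ whereas $\widetilde N_i\cap \widetilde{E_i\times\PP^2}$ is only a $\PP^1$-bundle, that final step tacitly uses exactly the point you make explicit: the scheme-theoretic base ideal of $\epsilon$ on the ambient fourfold is the \emph{reduced} ideal $\I_D$. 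Your local computation --- $\cM$ is generated near $E_i$ by $e_1, e_2, te_3$ inside $H^0(S,\OO_S(L))\otimes\OO_S(L)$, so the base ideal is $(t,y_1,y_2)=\I_{D_i}$ --- is correct and is just the relative, four-dimensional version of the paper's fibrewise description of $\epsilon|_{\{x\}\times\PP^2}$ as the projection from $(x,u_i)$; the appeal to the universal property of the blow-up then yields regularity of $\epsilon_2$ on all of $\widetilde{S\times\PP^2}$ in one stroke. In short, your proof supplies a complete verification (including the reducedness check that would otherwise threaten the single-step resolution) where the paper offers a quicker deduction from the behaviour along a divisor; both rest on the same underlying identification of the base scheme with $D$.
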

\noindent In the sequel, it will be useful to consider  the exact commutative diagram

$$
     \xymatrix{
         H^0(S,\mathcal M) \ar[r] \ar[d] & H^0(\OO_S(L))\otimes H^0(\OO_S(L)) \ar[d] \ar[r]^{} & \bigoplus_{i = 1}^4 H^0( \mathcal O_{E_i}(2L)) \ar[d] \\
          H^0\bigl(\mathcal I_{\{w_1 \dots w_4\}}(1,1)\bigr) \ar[r]^{}       & H^0(\OO_{\PP^2}(1))\otimes H^0(\OO_{\PP^2}(1)) \ar[r]^{} & \bigoplus_{i = 1}^4 H^0(\mathcal O_{w_i}(2)) }
$$

where the vertical arrows are isomorphisms induced by $\sigma: S \to \mathbf P^2$. Starting from the left arrow, one can construct the commutative diagram
$$
     \xymatrix{
         H^0(S,\cM)\otimes \OO_S \ar[r] \ar[d] & \cM \ar[d]^{j} \\
          H^0\bigl(\mathcal I_{\{w_1 \dots w_4\}} (1,1)\bigr)\otimes \OO_S \ar[r]^{}       & H^0(S,L)\otimes \OO_S(L) }
$$


Passing to evaluation maps, we obtain the morphism
$h: \PP \to \mathbf P^4$ and the rational map $h_D: S \times \mathbf P^2 \dashrightarrow \mathbf P^4$ defined by the space
$\bigl(\sigma \times \mathrm{id}_{\mathbf P^2}\bigr)^* H^0\Bigl(\PP^2\times \PP^2, \mathcal I_{\{w_1, \ldots, w_4\}}(1,1)\Bigr)$.  \par
 The discussion above is summarized in the following commutative diagram:
  $$\xymatrix{ &  {\widetilde{S \times \mathbf P^2} } \ar[dl]_{\alpha} \ar[dr]^{\epsilon_2} & \\
    {S \times \mathbf P^2} \ar@{.>}[rr]^{\epsilon} \ar@{.>}[dr]_{h_D} & &   { \PP} \ar[dl]^h &  \\
  & {\mathbf P^4} }
 $$

 We derive a few consequences. Let $\pi_1: S \times \mathbf P^2 \to S$ and $\pi_2: S \times \mathbf P^2 \to \mathbf P^2$
be the two projections, then define the following effective divisors of $\widetilde {S \times \mathbf P^2}$:
 $$
\widetilde H  \in \bigl|(\pi_1 \circ \alpha)^* (\mathcal O_S(-K_S))\bigr| \ , \ \widetilde H_1 \in \bigl|(\pi_1 \circ \alpha)^*(\mathcal O_S(L))\bigr| \ , \ \widetilde H_2 \in \bigl|(\pi_2 \circ \alpha)^*(\mathcal O_{\mathbf P^2}(1))\bigr|,
$$
as well as
$$
\widetilde N_i := \alpha^{-1}(D_i) \ \mbox{ and } \ \widetilde N = \sum_{ i = 1}^4 \widetilde N_i.
$$
Applying push-forward under $\epsilon_{2}$, we obtain the following divisors on $\PP$:
$$
H:=\epsilon_{2*}(\widetilde H),  \ \  H_i:=\epsilon_{2*}(\widetilde H_i),  \ \ N_i:=\epsilon_{2*}(\widetilde N_i),  \mbox{ and } \  \ N:= \sum_{i=1}^4 N_i.
$$
\begin{proposition} $|\OO_{\PP}(1)|= |H_1 + H_2 - N|$. \end{proposition}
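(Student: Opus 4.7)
The plan is to extract the linear equivalence $\OO_{\PP}(1)\sim H_1+H_2-N$ from the identity $h\circ\epsilon_2=h_D\circ\alpha$ by pulling back $\OO_{\PP^4}(1)$ two ways and then pushing forward via the birational morphism $\epsilon_2$. First I would note that $h:\PP\to\PP^4$ is given by the \emph{complete} linear system $|\OO_{\PP}(1)|$: the defining sequence (\ref{seqM}) yields $h^0(\PP,\OO_{\PP}(1))=h^0(S,\cM)=5=\dim\PP^4+1$, and combined with the global generation of $\cM$ (Lemma \ref{globgen}) this gives $h^*\OO_{\PP^4}(1)=\OO_{\PP}(1)$. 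Hence
$$
\epsilon_2^*\OO_{\PP}(1)\;=\;(h\circ\epsilon_2)^*\OO_{\PP^4}(1)\;=\;(h_D\circ\alpha)^*\OO_{\PP^4}(1).
$$

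Next I would identify this last line bundle. The rational map $h_D$ is defined by the pullback under $\sigma\times\mathrm{id}$ of $|\I_{\{w_1,\ldots,w_4\}}(1,1)|$, a sublinear system of $|\pi_1^*\OO_S(L)\otimes\pi_2^*\OO_{\PP^2}(1)|$ whose class on $S\times\PP^2$ is $H_1'+H_2'$. For $u_1,\ldots,u_4$ general, the base locus of $|\I_{\{w_1,\ldots,w_4\}}(1,1)|$ on $\PP^2\times\PP^2$ is exactly $\{w_1,\ldots,w_4\}$ (the rank-$1$ tensors in the $3$-dimensional projective subspace spanned by $u_i\otimes u_i$), so the base locus of $h_D$ on $S\times\PP^2$ is precisely $D=\sum D_i$. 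Blowing up $D$ via $\alpha$ resolves this base locus, and $h_D\circ\alpha$ is the morphism defined by the basepoint-free system in the class $\widetilde{H}_1+\widetilde{H}_2-\widetilde{N}$. Combined with the previous step,
$$
\epsilon_2^*\OO_{\PP}(1)\;\cong\;\OO_{\widetilde{S\times\PP^2}}(\widetilde{H}_1+\widetilde{H}_2-\widetilde{N}).
$$

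The final step is to push this identity forward by $\epsilon_2$. Since $N_i=\epsilon_{2*}(\widetilde{N}_i)$ is by the stated construction a divisor on $\PP$, the $\widetilde{N}_i$ are not $\epsilon_2$-exceptional, while $\widetilde{H}_1,\widetilde{H}_2$ are visibly non-exceptional (the $\epsilon_2$-exceptional locus being the strict transform of $\bigcup E_i\times\PP^2$). Pushforward of a Cartier divisor class with no exceptional components under a birational morphism of smooth projective varieties preserves linear equivalence, so we conclude $\OO_{\PP}(1)\sim H_1+H_2-N$ on $\PP$, hence $|\OO_{\PP}(1)|=|H_1+H_2-N|$. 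The main obstacle I anticipate lies in this last step: carefully justifying that the line-bundle identification on $\widetilde{S\times\PP^2}$ descends under $\epsilon_{2*}$ to the claimed linear equivalence on $\PP$, which reduces to confirming non-contraction of the three classes $\widetilde{H}_1,\widetilde{H}_2,\widetilde{N}_i$ --- while Steps~1--2 are essentially direct unwindings of the commutative diagram and of the defining sequence of $\cM$.
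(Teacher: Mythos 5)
Your argument is correct, but it takes a genuinely different route from the paper's. The paper's proof is a one-liner: it invokes Maruyama's theorem on elementary transformations (\cite{Ma}, Theorem 1.4) to assert directly that $\epsilon_2^*\OO_{\PP}(1)=\OO_{\widetilde{S\times \PP^2}}(\widetilde H_1+\widetilde H_2-\widetilde N)$, and then pushes forward exactly as you do. You instead derive that same pullback identity from the commutative diagram through $\PP^4$: since $h$ is the morphism attached to the base-point-free complete system $|\OO_{\PP}(1)|$ and $h_D$ is given by the pulled-back system $\bigl|\I_{\{w_1,\ldots,w_4\}}(1,1)\bigr|$, the identity $h\circ\epsilon_2=h_D\circ\alpha$ lets you read off $\epsilon_2^*\OO_{\PP}(1)$ as the class of the moving part of $\alpha^*(\sigma\times\mathrm{id}_{\PP^2})^*\bigl|\I_{\{w_1,\ldots,w_4\}}(1,1)\bigr|$. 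What your route buys is independence from the elementary-transformation machinery, at the cost of two small checks that Maruyama's theorem packages for you: (a) the fixed part after pulling back and blowing up is exactly $\widetilde N$ with multiplicity one on each $\widetilde N_i$ --- this needs that the base scheme downstairs is the four reduced points $w_1,\ldots,w_4$ and that a general member, being smooth at $w_i$, has strict transform meeting $\widetilde N_i$ in a divisor of multiplicity one (the paper uses exactly this fact two propositions later when computing the class of $\widetilde Y$); and (b) the residual system is base-point free, which you get for free since $h_D\circ\alpha=h\circ\epsilon_2$ is a morphism. Your final step can be streamlined: there is no need to verify that $\widetilde H_1,\widetilde H_2,\widetilde N_i$ are non-exceptional, because $\epsilon_{2*}\epsilon_2^*=\mathrm{id}$ on $CH^1(\PP)$ for any proper birational morphism onto a normal variety, and $\epsilon_{2*}(\widetilde H_1+\widetilde H_2-\widetilde N)=H_1+H_2-N$ holds by the very definition of $H_1$, $H_2$ and $N$.
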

\begin{proof} Using for instance \cite{Ma} Theorem 1.4, we have $\epsilon_2^*(\OO_{\PP}(1))=\OO_{\widetilde{S\times \PP^2}}(\widetilde H_1 + \widetilde H_2 - \widetilde N)$.  By pushing forward, we obtain the desired result.
\end{proof}

We have already remarked that $h: \PP \to \mathbf P^4$ is a morphism of degree $2$. The inverse image $E \subset \PP$ under $h$ of a general line in $\PP^4$ is a smooth elliptic curve.
The restriction $h_{E}$ has $4$ branch points and the branch locus of $h$ is a quartic hypersurface $B \subset \PP^4$.

\begin{proposition} For each $d\geq 0$, one has $h^0(\PP, \mathcal O_{\PP}(d)) = \binom{d+4}4 + \binom{2d}4$.
\end{proposition}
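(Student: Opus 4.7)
The plan is to exploit the fact that $h:\PP\to \PP^4$ is a finite double cover with quartic branch locus, and to read off $h^0(\OO_\PP(d))$ from the structure sheaf of this cover.

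The first step is to upgrade the generic degree-$2$ statement to honest finiteness of $h$. Global generation of $\cM$ (Lemma \ref{globgen}) already gives that $h$ is a morphism; to exclude contracted curves I would appeal to the resolution $\epsilon_2:\widetilde{S\times \PP^2}\to \PP$ together with the identification $|\OO_\PP(1)|=|H_1+H_2-N|$, checking that every irreducible curve in $\PP$ meets $H_1+H_2-N$ positively. Granted finiteness, miracle flatness (since $\PP$ and $\PP^4$ are both smooth four-folds) forces $h$ to be flat, so that $h_*\OO_\PP$ is a locally free sheaf of rank $2$ on $\PP^4$.

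Next, the classical double-cover formalism takes over. In characteristic zero the trace map canonically splits $h_*\OO_\PP=\OO_{\PP^4}\oplus \mathcal L^{-1}$ for some line bundle $\mathcal L$, and the $\OO_{\PP^4}$-algebra structure yields a section of $\mathcal L^{\otimes 2}$ whose zero locus is the branch divisor $B$. Since $B$ has been identified just above the statement as a quartic hypersurface, one gets $\mathcal L^{\otimes 2}=\OO_{\PP^4}(4)$, and $\mathrm{Pic}(\PP^4)=\mathbb Z$ forces $\mathcal L=\OO_{\PP^4}(2)$; thus $h_*\OO_\PP=\OO_{\PP^4}\oplus \OO_{\PP^4}(-2)$. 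Applying the projection formula to $\OO_\PP(d)=h^*\OO_{\PP^4}(d)$ then gives
$$h_*\OO_\PP(d) \;=\; \OO_{\PP^4}(d)\oplus \OO_{\PP^4}(d-2),$$
and taking global sections yields $h^0(\PP,\OO_\PP(d)) = h^0(\PP^4,\OO(d)) + h^0(\PP^4,\OO(d-2))$, which is the asserted count.

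The main obstacle in this plan is the finiteness check in Step 1, since ampleness of $\OO_\PP(1)$ is not formal from the global generation of $\cM$ alone. A cleaner alternative that sidesteps the issue is to compute $\chi(\PP,\OO_\PP(d))=\chi(S,\mathrm{Sym}^d\cM)$ by Hirzebruch--Riemann--Roch on $S$ using the Chern data $c_1(\cM)=-K_S$, $c_2(\cM)=3$ and $K_S^2=5$ recorded in \eqref{chV}, and then to establish the vanishing $H^i(\PP,\OO_\PP(d))=0$ for $i\ge 1$ and $d\ge 0$ by Kodaira-type arguments on the $\PP^2$-bundle $\pi:\PP\to S$; this reproduces the numerical formula without the detour through $\PP^4$.
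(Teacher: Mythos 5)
Your overall route is the same as the paper's: exploit the degree-two map $h:\PP\to\PP^4$ branched over a quartic and split the pushforward of $\OO_{\PP}$. The paper phrases this via the Stein factorization $h=s\circ f$, with $f:\overline{\PP}\to\PP^4$ a genuine double cover and $s$ birational, and then decomposes $H^0(f^*\OO_{\PP^4}(d))$ into eigenspaces of the covering involution; your projection-formula computation with $h_*\OO_{\PP}=\OO_{\PP^4}\oplus\OO_{\PP^4}(-2)$ is the same calculation in different clothing. However, Step 1 of your primary plan fails: $h$ is \emph{not} finite. The morphism $\epsilon_2$ contracts each divisor $\widetilde{E_i\times\PP^2}$ to a surface isomorphic to $\PP^1\times\PP^1$ inside $\PP$, and since $\cM_{|E_i}=\OO_{\PP^1}(1)\oplus\OO_{\PP^1}^{\oplus 2}$ has a trivial rank-two summand, $\OO_{\PP}(1)=\OO_{\PP}(H_1+H_2-N)$ has degree zero on one ruling of each of these surfaces, and those rulings are contracted by $h$. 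So the positivity check you propose cannot succeed, miracle flatness does not apply to $h$ itself, and you must insert the Stein factorization (using that $s_*\OO_{\PP}=\OO_{\overline{\PP}}$ and $\overline{\PP}$ is normal, so $h_*\OO_{\PP}=f_*\OO_{\overline{\PP}}$ is still $\OO_{\PP^4}\oplus\OO_{\PP^4}(-2)$). Your fallback via Hirzebruch--Riemann--Roch on $S$ plus vanishing (note $-K_{\PP}=\OO_{\PP}(3)$ is nef and big, so Kawamata--Viehweg applies) is a clean way around all of this.

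The more serious issue is the final count. Your argument yields $h^0(\PP,\OO_{\PP}(d))=h^0(\OO_{\PP^4}(d))+h^0(\OO_{\PP^4}(d-2))=\binom{d+4}{4}+\binom{d+2}{4}$, which is \emph{not} the asserted $\binom{d+4}{4}+\binom{2d}{4}$ once $d\geq 3$; your closing claim that this ``is the asserted count'' is false as written. In fact your number is the correct one: for $d=3$, Riemann--Roch with $c_1(\cM)=-K_S$, $c_2(\cM)=3$ and $K_S^2=5$ gives $\chi(\PP,\OO_{\PP}(3))=\chi(S,\mathrm{Sym}^3\cM)=40=\binom{7}{4}+\binom{5}{4}$, whereas the printed formula would give $50$. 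The proposition as stated (and the exponent $2d-4$ in the paper's own proof, which should be $d-2$, since $b$ has degree $2$ and the anti-invariant eigenspace of $H^0(f^*\OO_{\PP^4}(d))$ is $b\cdot f^*H^0(\OO_{\PP^4}(d-2))$) contains an error; the two expressions agree exactly in the range $d\leq 2$ that the paper actually uses (e.g. $h^0(\OO_{\PP}(2))=16$, i.e. $|\OO_{\PP}(2)|=\PP^{15}$). You should prove and state the corrected formula rather than assert agreement with the printed one.
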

\begin{proof} We pass to the Stein factorization $h := s \circ f$, where $f: \overline {\PP} \to \mathbf P^4$ is a double cover and
$s: \PP \to \overline{\PP}$ is birational. In particular,  $h^0(\PP, \mathcal O_{\PP}(d)) = h^0(f^* \mathcal O_{\PP^4}(d))$.  The involution $\iota: \overline{\PP} \to \overline{\PP}$ induced by $f$ acts on $H^0(f^*\mathcal O_{\PP^4}(d))$ and  the eigenspaces are $f^*H^0(\mathcal O_{\PP^4}(d))$ and
$b \cdot f^* H^0(\mathcal O_{\mathbf P^4}(2d-4))$ respectively, where $b \in H^0(f^*\mathcal O_{\PP^4}(2))$ and $\mbox{div}(b) = f^{-1}(B)$.
\end{proof}

We can now relate the $15$-dimensional linear system $|\OO_{\PP}(2)|$ of \emph{smooth} conic bundles in $\PP$ to the linear system of $4$-\emph{nodal} conic bundles of type $(2,2)$ in $\mathbf P^2 \times \mathbf P^2$.
Let $\tilde I$ be the moving part of  the total transform $\bigl((\sigma \times \mathrm{id}_{\mathbf P^2}) \circ \alpha\bigr)^*\Bigl|\mathcal I^2_{\{w_1, \ldots, w_4\}}(2,2)\Bigr|$.
Over $\PP$, we consider the linear system $I':= (\epsilon_{2})_*\tilde I$, and conclude that:
\begin{proposition} One has the equality $I' =|\OO_{\PP}(2)|$ of linear systems on $\PP$.
\end{proposition}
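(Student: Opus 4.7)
The plan is to identify $\tilde I$ with the complete linear system $|\epsilon_2^*\OO_{\PP}(2)|$ on $\widetilde{S\times \PP^2}$ and then transfer the equality down to $\PP$ via the birational morphism $\epsilon_2$.

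The first step is a local computation determining the common fixed part of $\bigl((\sigma\times \mathrm{id}_{\PP^2})\circ \alpha\bigr)^*\PP^{15}$. A general $Q\in \PP^{15}$ has a node at $w_i=(u_i,u_i)$, so in local affine coordinates $(x_1,x_2,y_1,y_2)$ near $w_i$ it is defined by $f=Q_2+f_{\geq 3}$, with $Q_2$ a non-degenerate quadratic form. In a chart $(x_1,x_2)=(t,ts)$ of the blowup $\sigma\times \mathrm{id}_{\PP^2}$, the pullback $f(t,ts,y_1,y_2)$ lies in the ideal $(t,y_1,y_2)^2$ defining $D_i=E_i\times\{u_i\}$, and, because $Q_2$ is non-degenerate, not in its cube. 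Hence the total transform on $S\times \PP^2$ has multiplicity exactly $2$ along each $D_i$, and after blowing up $D$ via $\alpha$, the proper transform has class $2\widetilde H_1+2\widetilde H_2-2\widetilde N$. Since no further preferred component appears for a general $Q$, the common fixed divisor of the pulled-back system is exactly $2\widetilde N$, and $\tilde I$ is a linear system of effective divisors of class $2\widetilde H_1+2\widetilde H_2-2\widetilde N$ with $\dim \tilde I=\dim \PP^{15}=15$.

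By the preceding proposition $\epsilon_2^*\OO_{\PP}(1)=\OO_{\widetilde{S\times \PP^2}}(\widetilde H_1+\widetilde H_2-\widetilde N)$, so $\tilde I\subseteq |\epsilon_2^*\OO_{\PP}(2)|$. Since $\epsilon_2$ is birational between smooth projective varieties, $(\epsilon_2)_*\OO_{\widetilde{S\times \PP^2}}=\OO_{\PP}$, and the projection formula combined with the previous proposition yields
\[
h^0\bigl(\widetilde{S\times \PP^2},\epsilon_2^*\OO_{\PP}(2)\bigr)=h^0\bigl(\PP,\OO_{\PP}(2)\bigr)=\binom{6}{4}+\binom{4}{4}=16,
\]
whence $\dim |\epsilon_2^*\OO_{\PP}(2)|=15$. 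Equality of dimensions forces $\tilde I=|\epsilon_2^*\OO_{\PP}(2)|$, and applying the birational bijection $(\epsilon_2)_*:|\epsilon_2^*\OO_{\PP}(2)|\to |\OO_{\PP}(2)|$ finally produces $I'=|\OO_{\PP}(2)|$.

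The main technical point is the first step, specifically showing that the common fixed divisor of the pulled-back system equals \emph{exactly} $2\widetilde N$. The multiplicity computation handles the contribution along each $\widetilde N_i$; ruling out any additional fixed codimension-one locus comes down to exhibiting a single $Q\in \PP^{15}$ whose proper transform on $\widetilde{S\times \PP^2}$ contains no such locus, which is automatic for a general member having only the prescribed nodes.
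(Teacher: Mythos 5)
Your proof is correct and follows essentially the same strategy as the paper: pin down the class of the moving part of the pulled-back system as $2\widetilde H_1+2\widetilde H_2-2\widetilde N$ and conclude via the identification $\epsilon_2^*\OO_{\PP}(1)=\OO(\widetilde H_1+\widetilde H_2-\widetilde N)$ together with the dimension count $\dim\tilde I=15=\dim|\OO_{\PP}(2)|$. The only difference is tactical: the paper extracts the fixed divisor $2\widetilde N$ from the strict transforms of reducible members $Y_1+Y_2$ with $Y_i\in\bigl|\mathcal I_{\{w_1,\ldots,w_4\}}(1,1)\bigr|$, while you obtain it by a direct local multiplicity computation at the nodes $w_i$ (and you make explicit the dimension count and the injectivity/fixed-part checks that the paper leaves implicit).
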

\begin{proof} Consider a general threefold $Y \in \bigl|\mathcal I_{\{w_1, \ldots, w_4\}}(1,1)\bigr|$.  Its strict transform $\widetilde{Y}$  under the morphism $(\sigma \times \mathrm{id}_{\mathbf P^2}) \circ \alpha$ is smooth and has class
$\widetilde H_1 + \widetilde H_2 -\widetilde N$. Therefore we obtain  $(\epsilon_{2})_*(\widetilde Y)\in \vert H_1 + H_2 - N \vert = |\OO_{\PP}(1)|$, and then $I' = \vert \OO_{\PP}(2)\vert$.
\end{proof}

To conclude this discussion, the identification $$\bigl| \mathcal O_{\PP}(2) \bigr| = \Bigl| \mathcal I^2_{\{w_1, \ldots w_2\}}(2,2) \Bigr| := \PP^{15},
$$
induced by the birational map $\epsilon$, will be used throughout the rest of the paper.

\begin{remark} One can describe $h: \PP \to \mathbf P^4$ in geometric terms. Consider the rational map
$h':=h_D \circ \bigl(\sigma \times \mathrm{id}_{\PP^2}\bigr)^{-1}:\PP^2 \times \PP^2 \dashrightarrow \PP^4$, where $h_D$ appears in a previous diagram. Then $h'$ is defined by the linear system $\bigl|\mathcal I_{\{w_1, \ldots, w_4\}}(1,1)\bigr|$. If $\mathbf P^2 \times \mathbf P^2 \subset \mathbf P^8$
is the Segre embedding and $\Lambda \subset \mathbf P^8$ the linear span of $w_1, \ldots, w_4$, then $h'$
is the restriction to $\mathbf P^2 \times \mathbf P^2$ of the linear projection having center $\Lambda$.
\vskip 3pt

One can also recover the tautological bundle $\mathcal M$ as follows. Consider the family of planes $\Bigl\{\PP_x := h'_*\bigl(\{x \} \times \mathbf P^2\bigr)\Bigr\}_{x\in \PP^2}$. Its closure  in the
Grassmannian $\GG(2,4)$ of planes of $\PP^4$ is equal to the image of $S$ under the classifying map of $\mathcal M$. We omit further details.
 \end{remark}

\begin{proposition}\label{intnumb}
The following relations hold in $CH^4(\widetilde{S\times \PP^2})$:
$$\widetilde{N}^4=-4, \ \widetilde{N}^3\cdot \widetilde{H}=4, \  \widetilde{N}^3\cdot \widetilde{H}_1=\widetilde{N}^3\cdot \widetilde{H}_2=0, \ \widetilde{N}^2\cdot \widetilde{H}^2=\widetilde{N}^2\cdot \widetilde{H}_1^2=\widetilde{N}^2\cdot \widetilde{H}_2^2=0.$$
\end{proposition}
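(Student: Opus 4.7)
The morphism $\alpha$ is the blow-up of the smooth fourfold $Y := S \times \PP^2$ along the disjoint union of four smooth rational curves $D_1,\ldots,D_4$, each of codimension three. The plan is to reduce every intersection number to one normal-bundle calculation combined with the standard blow-up formulas.

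First I would identify $N_i := N_{D_i/Y}$. Because $D_i = E_i \times \{u_i\}$ splits as a product of inclusions, one has $N_i \cong N_{E_i/S}|_{E_i} \oplus T_{u_i}\PP^2 \cong \OO_{\PP^1}(-1) \oplus \OO_{\PP^1}^{\oplus 2}$, using $E_i^2 = -1$ on $S$. Consequently $c_1(N_i)$ has degree $-1$, all higher Chern classes vanish, and the Segre classes are $s_0(N_i) = 1$, $s_1(N_i)$ of degree $1$, and $s_m(N_i) = 0$ for $m \geq 2$.

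Next, writing $\widetilde{N}_i = \PP(N_i)$, $j_i : \widetilde{N}_i \hookrightarrow \widetilde{S\times\PP^2}$, and $\xi_i = c_1(\OO_{\PP(N_i)}(1))$, I would invoke the standard identities $\widetilde{N}_i|_{\widetilde{N}_i} = -\xi_i$ and $\widetilde{N}_i^k = (j_i)_*((-\xi_i)^{k-1})$, combined with the Segre pushforward $(p_i)_*(\xi_i^{2+m}) = s_m(N_i)$ along $p_i : \PP(N_i) \to D_i$. Because each of the divisors $\widetilde{H},\widetilde{H}_1,\widetilde{H}_2$ is the pullback of a class from $Y$, the projection formula delivers the uniform identity
$$\int_{\widetilde{S\times \PP^2}} \widetilde{N}_i^k \cdot \alpha^*\beta \;=\; (-1)^{k-1} \int_{D_i} s_{k-3}(N_i) \cdot \beta|_{D_i},$$
valid for any $\beta \in A^*(Y)$ (with the convention $s_m = 0$ for $m < 0$). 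The disjointness of the $\widetilde{N}_i$ reduces $\widetilde{N}^k$ to $\sum_i \widetilde{N}_i^k$.

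Finally I would read off each case. For $k=2$ one has $s_{-1} = 0$, giving immediately the three vanishings $\widetilde{N}^2\cdot \widetilde{H}^2 = \widetilde{N}^2\cdot \widetilde{H}_1^2 = \widetilde{N}^2 \cdot \widetilde{H}_2^2 = 0$. For $k=4$ with $\beta = 1$, the formula gives $\widetilde{N}_i^4 = -1$, hence $\widetilde{N}^4 = -4$. For $k=3$, $s_0(N_i)=1$ collapses the problem to restricting classes from $Y$ to $D_i \cong E_i$: $(-K_S)\cdot E_i = 1$ (from $-K_S = 3L-E_1-\cdots-E_4$), $L\cdot E_i = 0$, while $\pi_2|_{D_i}$ is the constant map to $u_i$; the three restrictions have degrees $1$, $0$, $0$. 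Summing over the four exceptional divisors yields $\widetilde{N}^3 \cdot \widetilde{H} = 4$ and $\widetilde{N}^3 \cdot \widetilde{H}_1 = \widetilde{N}^3 \cdot \widetilde{H}_2 = 0$. The only real obstacle is bookkeeping: one must keep the normal-bundle splitting and the sign conventions in the Segre-class formula straight; no deeper input is needed.
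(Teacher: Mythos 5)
Your argument is correct and follows essentially the same route as the paper: both identify each $\widetilde{N}_i$ as a $\PP^2$-bundle over $D_i\cong \PP^1$ (the paper writes it as $\PP\bigl(\OO_{D_i}^{\oplus 2}\oplus \OO_{D_i}(1)\bigr)$, i.e.\ the dual of your normal bundle $\OO_{\PP^1}(-1)\oplus\OO_{\PP^1}^{\oplus 2}$), use $\widetilde{N}_i^k=(-1)^{k-1}(j_i)_*(\xi_i^{k-1})$ together with the projection formula, and evaluate via $(-K_S)\cdot E_i=1$, $L\cdot E_i=0$ and the constancy of $\pi_2$ on $D_i$. The only cosmetic difference is that you package the fibre integrals as Segre classes of the normal bundle, while the paper computes $(\alpha_i)_*(\xi_i^{j})$ directly; the numbers and the logic are identical.
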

\begin{proof}
These are standard calculations on blow-ups. We fix $i\in \{1, \ldots, 4\}$ and note that $\widetilde{N}_i=\PP\bigl(\OO_{D_i}^{\oplus 2}\oplus \OO_{D_i}(1)\bigr)$. We denote by $\xi_i:=c_1(\OO_{\widetilde{N}_i}(1))\in CH^1(\widetilde{N}_i)$ the class of the tautological bundle on the exceptional divisor, by $\alpha_i:=\alpha_{| \widetilde{N}_i}:\widetilde{N}_i\rightarrow D_i$ the restriction of $\alpha$, and by $j_i:\widetilde{N}_i\hookrightarrow \widetilde{S\times \PP^2}$ the inclusion map.
Then for $k=1, \ldots, 4$, the formula $\widetilde{N}_i^k=(-1)^{k-1} (j_i)_*(\xi_i^{k-1})$ holds in $CH^k(\widetilde{S\times \PP^2})$. In particular, $$\widetilde{N}_i^4=-(j_i)_*(\xi_i^3)=-c_1(\OO_{D_i}^{\oplus 2}\oplus \OO_{D_i}(1))=-1,$$ which implies that $\widetilde{N}^4=\widetilde{N}_1^4+\cdots +\widetilde{N}_4^4=-4$.
Furthermore, based on dimension reasons, $\widetilde{N}_i^2\cdot \alpha^*(\gamma)=-(j_i)_*\bigl(\xi_i\cdot \alpha_i^*(\gamma_{| D_i})\bigr)=0$, for each class $\gamma\in CH^2(S\times \PP^2)$. Finally, for a class $\gamma\in CH^1(S\times \PP^2)$, we have that
$\widetilde{N_i}^3\cdot \alpha^*(\gamma)=(j_i)_*\bigl(\xi_i^2\cdot \alpha_i^*(\gamma_{| D_i})\bigr)=(\alpha_i)_*(\xi_i^2)\cdot \gamma_{| D_i}=\gamma\cdot D_i$, where the last intersection product is computed on $S\times \PP^2$. This  determines all top intersection numbers involving $\widetilde{N}^3$, which finishes the proof.

\end{proof}
\begin{remark} Since $\epsilon_2$ contracts the divisors $\widetilde{E_i\times \PP^2}$, clearly $H=3H_1-N$. An immediate consequence of Proposition \ref{intnumb} is that the degree of the morphism $h:\PP\rightarrow \PP^4$ equals
$\mbox{deg}(h)=(H_1+H_2-N)^4=6H_1^2\cdot H_2^2+N^4=2$.
\end{remark}

 \subsection{Pencils of conic bundles in the projective bundle $\PP$}
 In this section we determine the numerical characters of a pencil of $4$-nodal conic bundle of type $(2, 2)$. Let
$$P\subset \bigl|\OO_{\PP}(2)\bigr|=\bigl|\OO_{\PP}(2H_1+2H_2-2N)\bigr|$$ be a Lefschetz pencil in $\PP$. We may assume that its base locus $B\subset \PP$ is a smooth surface. We are primarily interested in the number of singular conic bundles and those having a double line respectively. We first describe $B$.

\begin{lemma}\label{ints}
For the base surface $B\subset \PP$ of a pencil of conic bundles, the following hold:
\begin{enumerate}
\item $K_B=\OO_B(H_1+H_2-N)\in \mathrm{Pic}(B).$
\item $K_B^2=8$ and $c_2(B)=64$.
\end{enumerate}
\end{lemma}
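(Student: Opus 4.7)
The plan is to realize $B$ as the smooth transverse intersection of two generic members $D_1, D_2 \in |\OO_{\PP}(2)|$ of the pencil and invoke adjunction. The crucial input is a clean expression for $K_{\PP}$ coming from the $\PP^2$-bundle structure $\pi:\PP \to S$. Writing $\xi := c_1(\OO_{\PP}(1))$, the standard formula for the canonical class of a Grothendieck projective bundle reads
$$K_{\PP} = -3\xi + \pi^*\bigl(K_S + c_1(\cM)\bigr).$$
Since $c_1(\cM) = -K_S$ by (\ref{chV}), the pullback term cancels and we obtain the identity $K_{\PP} = -3\xi$. From this, part~(i) is immediate: by adjunction,
$$K_B = (K_{\PP} + D_1 + D_2)|_B = (-3\xi + 4\xi)|_B = \xi|_B,$$
which via the identification $\OO_{\PP}(1) = H_1 + H_2 - N$ reads $K_B = \OO_B(H_1 + H_2 - N)$.

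For $K_B^2$, I push the intersection to $\PP$: $K_B^2 = \xi^2 \cdot [B] = \xi^2 \cdot (2\xi)^2 = 4\xi^4$. The top intersection $\xi^4 = (H_1+H_2-N)^4$ equals the degree of the double cover $h:\PP \to \PP^4$, which is $2$, as noted in the Remark following Proposition~\ref{intnumb}. Hence $K_B^2 = 8$.

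For $c_2(B)$, I would use the normal bundle sequence $0 \to T_B \to T_{\PP}|_B \to N_{B/\PP} \to 0$, where $N_{B/\PP} \cong (\OO_{\PP}(2)|_B)^{\oplus 2}$, so $c_1(N) = 4\xi|_B$ and $c_2(N) = 4\xi^2|_B$. Expanding Whitney's formula yields $c_2(T_B) = c_2(T_{\PP})|_B - 12\xi^2|_B + 12\xi^2|_B = c_2(T_{\PP})|_B$, the cancellation $-12 + 16 - 4 = 0$ reflecting the fact that $c_1(T_{\PP}) = 3\xi$ is proportional to $\xi$. Therefore
$$c_2(B) = 4\,c_2(T_{\PP}) \cdot \xi^2 \quad \text{in } \PP.$$
To compute $c_2(T_{\PP})$, I combine the relative Euler sequence $0 \to \OO_{\PP} \to \pi^*\cM^{\vee} \otimes \OO_{\PP}(1) \to T_{\PP/S} \to 0$ with the product $c(T_{\PP}) = c(T_{\PP/S}) \cdot \pi^* c(T_S)$. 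Using $c_1(\cM) = -K_S$, $c_2(\cM) = 3$, together with the classical identities $K_S^2 = 5$ and $c_2(T_S) = \chi_{\mathrm{top}}(S) = 7$ for the quintic Del Pezzo, a short computation gives $c_2(T_{\PP}) = 5\pi^*[\mathrm{pt}] - \pi^* K_S \cdot \xi + 3\xi^2$. Intersecting with $\xi^2$ via $\pi^*[\mathrm{pt}] \cdot \xi^2 = 1$, the Grothendieck relation $\xi^3 = -\pi^* K_S \cdot \xi^2 - 3\pi^*[\mathrm{pt}] \cdot \xi$, and $\xi^4 = 2$, one finds $c_2(T_{\PP}) \cdot \xi^2 = 5 + 5 + 6 = 16$, whence $c_2(B) = 64$.

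The main obstacle is bookkeeping rather than conceptual. One must be careful about the sign convention for the Grothendieck projective bundle to ensure that $K_{\PP} = -3\xi$ really drops out of $c_1(\cM) = -K_S$, and then keep the contributions from fiber and base to $c_2(T_{\PP})$ straight. Once these pieces are in place, every intersection reduces via the Grothendieck relation (or equivalently via the explicit numbers in Proposition~\ref{intnumb} lifted along $\epsilon_2$) to the stated answers.
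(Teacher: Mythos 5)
Your proposal is correct, and it arrives at the same key identity $K_{\PP}=\OO_{\PP}(-3)$ as the paper, but by a different route and with a genuinely different computation of $c_2(B)$. For the canonical class, the paper pushes forward $K_{\widetilde{S\times \PP^2}}$ along $\epsilon_2$ to get $K_{\PP}=\OO_{\PP}(-3H_1-3H_2+3N)$, whereas you use the relative Euler sequence of $\pi:\PP(\cM)\rightarrow S$ together with $c_1(\cM)=-K_S$; since $\pi_*\OO_{\PP}(1)=\cM$ (the paper uses $h^0(\PP,\OO_{\PP}(1))=h^0(S,\cM)=5$), the Grothendieck quotient convention applies and your formula $K_{\PP}=-3\xi+\pi^*(K_S+c_1(\cM))$ is the right one, so the sign worry you flag is resolved. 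Part (i) and $K_B^2=4\xi^4=8$ are then identical in substance to the paper's (your $\xi^4=2$ is its $6H_1^2\cdot H_2^2+N^4=2$ from Proposition \ref{intnumb}). The real divergence is $c_2(B)$: the paper invokes Noether's formula $12\chi(B,\OO_B)=K_B^2+c_2(B)$ together with the (unproved there) value $\chi(B,\OO_B)=6$, which ultimately rests on its formula for $h^0(\PP,\OO_{\PP}(d))$ and the Koszul resolution of $\OO_B$; you instead compute $c_2(T_{\PP})=5\pi^*[\mathrm{pt}]-\pi^*K_S\cdot \xi+3\xi^2$ from the Euler sequence, use the cancellation in the normal bundle sequence forced by $c_1(T_{\PP})=3\xi$ being proportional to $\xi$, and evaluate $c_2(T_{\PP})\cdot 4\xi^2=4(5+5+6)=64$ via the Grothendieck relation. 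I checked your intermediate numbers ($-\pi^*K_S\cdot\xi^3=5$, $\pi^*[\mathrm{pt}]\cdot\xi^2=1$, $\xi^4=2$) and they are correct. Your route is self-contained and avoids justifying $\chi(B,\OO_B)=6$; the paper's is shorter given the cohomological groundwork it has already laid on $\PP$.
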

\begin{proof} The surface $B$ is a complete intersection in $\PP$, hence by adjunction
$$K_B=K_{\PP| B}\otimes \OO_B(4H_1+4H_2-4N).$$
Furthermore, $K_{\widetilde{S\times \PP^2}}=\alpha^* \bigl(\OO_S(-H)\boxtimes \OO_{\PP^2}(-3)\bigr)\otimes \OO_{\widetilde{S\times \PP^2}}(2\widetilde{N})$, and by push-pull
$$K_{\PP}=(\epsilon_2)_*\bigl(K_{\widetilde{S\times \PP^2}}\bigr)=\OO_{\PP}(-H-3H_2+2N)=\OO_{\PP}(-3H_1-3H_2+3N),$$
for $H=3H_1-N$. We find that $K_B=\OO_B(H_1+H_2-N)$. From Lemma \ref{intnumb}, we compute
$$K_B^2=4(H_1+H_2-N)^2\cdot (H_1+H_2-N)^2=24H_1^2\cdot H_2^2+4N^4=8.$$
Finally, from the Euler formula applied for $B$, we obtain $12\chi(B, \OO_B)=K_B^2+c_2(B)$. Since $\chi(B, \OO_B)=6$, this yields $c_2(B)=64$.
\end{proof}

For a variety $Z$ we denote as usual by $e(Z)$ its topological Euler characteristic.
\begin{lemma}\label{eul1}
For a general conic bundle $Q\in \bigl|\OO_{\PP}(2)\bigr|$, we have that $e(Q)=4$, whereas for conic bundle $Q_0$ with a single ordinary quadratic singularity, $e(Q_0)=5$.
\end{lemma}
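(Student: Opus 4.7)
The plan is to read both Euler characteristics off the conic bundle structure $\pi|_Q:Q\to S$, which exists because $\mathcal O_{\PP}(2)$ cuts $Q$ out of the $\PP^2$-bundle $\PP=\PP(\cM)$. Since $\cM$ is globally generated (Lemma \ref{globgen}), the linear system $|\mathcal O_{\PP}(2)|$ is base point free, so by Bertini a general $Q$ is smooth. The fibres of $\pi|_Q$ are plane conics, generically smooth, degenerating to pairs of distinct lines over a curve $\Delta\subset S$ (the discriminant) which for general $Q$ is itself smooth.

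The first task is to identify $\Delta$ and compute its genus. Using the identification $|\mathcal O_{\PP}(2)|=|\I^2_{\{w_1,\ldots,w_4\}}(2,2)|$, the image of $Q$ in $\PP^2\times\PP^2$ is a $4$-nodal threefold whose first projection has discriminant a plane sextic $\Gamma$ nodal exactly at $u_1,\ldots,u_4$, as shown in Section~1. The strict transform $\Delta\subset S$ has class $6L-2(E_1+\cdots+E_4)$; adjunction with $K_S=-3L+\sum E_i$ gives $g(\Delta)=1+\tfrac12(6L-2E)\cdot(3L-E)=6$. In particular $\Delta$ is smooth (the nodes of $\Gamma$ are resolved by $\sigma$) and $e(\Delta)=-10$.

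Stratifying $Q$ over $S$ by smooth vs.\ nodal fibres, and using $e(S)=7$ for the quintic del Pezzo:
\[
e(Q)=2\cdot e(S\setminus\Delta)+3\cdot e(\Delta)=2\,e(S)+e(\Delta)=14-10=4.
\]

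For the second assertion, I would argue by Milnor's formula applied to a smoothing of $Q_0$ inside the pencil $P$. An ordinary double point on a threefold has Milnor number $\mu=1$ and the Milnor fibre is homotopy equivalent to $S^3$, so has Euler characteristic $1+(-1)^3\mu=0$. Replacing the singular point by the Milnor fibre in a nearby smooth $Q$ shows
\[
e(Q)-e(Q_0)=e(\text{Milnor fibre})-1=-1,
\]
so $e(Q_0)=5$. The only nontrivial input is the smoothness and genus of $\Delta$ for general $Q$, which follows cleanly from the results of Section~1; the rest is topological bookkeeping.
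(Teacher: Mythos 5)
Your computation of $e(Q)=4$ is correct and is essentially the paper's own argument: both stratify $Q$ over $S$ into smooth conic fibres ($e=2$) and rank-two fibres over the discriminant ($e=3$), identify the discriminant as a smooth curve in $|-2K_S|=|6L-2E|$ of genus $6$, and conclude $e(Q)=2e(S)+e(\Delta)=14-10=4$. For the second assertion the two arguments genuinely diverge. The paper stays with the fibration picture: for $Q_0$ with one ordinary double point the discriminant curve $C_0$ acquires a single node (over which the fibre is still a pair of distinct lines meeting at the singular point of $Q_0$, so every fibre over $C_0$ still has Euler characteristic $3$), whence $e(Q_0)=2e(S)+e(C_0)=14-9=5$. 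You instead invoke the local smoothing picture: replacing the cone neighbourhood of the node ($e=1$) by the Milnor fibre of a threefold ODP ($\simeq S^3$, so $e=0$) gives $e(Q)-e(Q_0)=-1$. Both are correct and give $5$; your route avoids having to analyse the fibre over the node of $C_0$ and the topology of the nodal discriminant, at the cost of needing a smoothing of $Q_0$ (which is available, since $Q_0$ sits in a base-point-free linear system with smooth general member), while the paper's route keeps everything inside the single formula $e=2e(S)+e(\mathrm{discriminant})$, which is also what it reuses in the count of nodal members of a Lefschetz pencil in Theorem \ref{singconic}.
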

\begin{proof}
We fix a conic bundle $\pi_1:Q\rightarrow S$ with smooth discriminant curve $C\in |-2K_S|$. We then write the relation
$e(Q-\pi_1^*(C))=2e(S-C)$. Since $e(\pi_1^*(C))=3e(C)$, we find that $e(Q)=2e(S)+e(C)=2\cdot 7-10=4$.

Similarly, if $\pi_1:Q_0\rightarrow \PP^2$ is a conic bundle such that
the discriminant curve $C_0\subset S$ has a unique node, then $e(Q_0)=2e(S)+e(C_0)=14-9=5$.
\end{proof}

In the next statement we use the notation from \cite{FL} for divisors classes on $\rr_g$, see also the beginning of Section 3 for further details.

\begin{theorem}\label{singconic}
In a Lefschetz pencil of conic bundles $P\subset |\OO_{\PP}(2)|$ there are precisely $77$ singular conic bundles and $32$ conic bundles with a double line.
\end{theorem}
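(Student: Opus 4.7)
My plan is to prove the two enumerative statements by separate methods: an Euler-characteristic balance for the $77$ ODP members, and a degeneracy-locus Chern-class calculation on a suitable bundle of lines for the $32$ double-line members.

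For the first count, I would form the total space $\widetilde{\PP} := \mathrm{Bl}_B\, \PP$ of the pencil, equipped with its natural fibration $f:\widetilde{\PP} \to P \cong \PP^1$ whose fibres are the members of $P$. The blow-up formula (the exceptional divisor being a $\PP^1$-bundle over $B$) gives $e(\widetilde{\PP}) = e(\PP) + e(B)$; since $\PP = \PP(\cM)$ is a $\PP^2$-bundle over the degree-$5$ del Pezzo surface $S$ with $e(S) = 7$, and $e(B) = c_2(B) = 64$ by Lemma \ref{ints}, this yields $e(\widetilde{\PP}) = 21 + 64 = 85$. Assuming that a generic pencil contains $n_1$ members with a single ordinary double point and $n_2 = 32$ members with a double-line fibre (and no other singular members), additivity of Euler characteristic for $f$ reads
$$85 \;=\; 2\cdot 4 \;+\; n_1(5-4) \;+\; n_2\bigl(e(Q_{\mathrm{dbl}}) - 4\bigr),$$
with $e = 4,5$ for smooth and ODP fibres from Lemma \ref{eul1}. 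A short direct calculation in the style of Lemma \ref{eul1}, noting that the discriminant curve on $S$ of a conic bundle with one rank-$1$ point acquires exactly one node (so its topological Euler characteristic moves from $-10$ to $-9$), shows $e(Q_{\mathrm{dbl}}) = 4$; therefore the $n_2$-term drops out, leaving $n_1 = 77$.

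For the second count, I would introduce $\mathcal{L} := \PP(\cM^{\vee}) \to S$, the $\PP^2$-bundle parametrising lines in the fibres of $\PP \to S$, endowed with the tautological sub-line bundle $\mathcal{O}_{\mathcal{L}}(-1) \hookrightarrow \pi^*\cM$. The fibrewise Veronese squaring gives an inclusion $\mathcal{O}_{\mathcal{L}}(-2) \hookrightarrow \pi^* S^2\cM$, and a conic bundle $Q \in |\mathcal{O}_{\PP}(2)|$, identified with a section of $S^2\cM$, contains the line $(x,[v]) \in \mathcal{L}$ as a double-line fibre exactly when $Q(x)$ lies in $\mathcal{O}_{\mathcal{L}}(-2)_{(x,[v])} = \langle v \otimes v \rangle$. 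For the universal pencil on $\mathcal{L} \times P$ (a section of $\pi^* S^2\cM \boxtimes \mathcal{O}_P(1)$), the induced image in the rank-$5$ quotient bundle $\mathcal{Q}_0 \boxtimes \mathcal{O}_P(1)$, with $\mathcal{Q}_0 := \pi^* S^2\cM/\mathcal{O}_{\mathcal{L}}(-2)$, has zero scheme of expected dimension $0$ on the $5$-dimensional $\mathcal{L} \times P$, and the count is
$$\int_{\mathcal{L}\times P} c_5\bigl(\mathcal{Q}_0 \boxtimes \mathcal{O}_P(1)\bigr) \;=\; \int_{\mathcal{L}} c_4(\mathcal{Q}_0),$$
using $c_1(\mathcal{O}_P(1))^2 = 0$. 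Using $c_1(S^2\cM) = -4K_S$, $c_2(S^2\cM) = 5K_S^2 + 5\,c_2(\cM) = 40$, and $c_{\geq 3}(S^2\cM) = 0$ on $S$ for dimensional reasons, one expands $c_4(\mathcal{Q}_0) = 16\, h_{\mathcal{L}}^4 - 32\, K_S\, h_{\mathcal{L}}^3 + 160\, h_{\mathcal{L}}^2$; combining the projective-bundle relation $h_{\mathcal{L}}^3 = K_S h_{\mathcal{L}}^2 - 3 h_{\mathcal{L}}$ with $K_S^2 = 5$ gives $\int_{\mathcal{L}} h_{\mathcal{L}}^4 = 2$ and $\int_{\mathcal{L}} K_S h_{\mathcal{L}}^3 = 5$, so the three terms integrate to $32,\,-160,\,160$, summing to $32$.

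The hardest part will be the genericity verification underlying both counts: one must check that the discriminant locus in $|\mathcal{O}_{\PP}(2)|$ has precisely the two hypersurface components corresponding respectively to an ordinary double point of the total space and to a double-line fibre, that a Lefschetz pencil avoids all codimension-$\geq 2$ strata, and that it meets each component transversally at smooth points so every intersection contributes with multiplicity one. Granted these transversality facts, the Chern and Euler calculations above yield $77$ and $32$ respectively.
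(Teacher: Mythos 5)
Your count of the $77$ nodal members is essentially the paper's argument: the paper writes $\delta = e(\PP)+e(B)-2e(Q) = 3e(S)+64-8 = 77$, using Lemmas \ref{ints} and \ref{eul1} exactly as you do. Your explicit check that a member with a double-line fibre still has $e(Q_{\mathrm{dbl}})=4$ (the discriminant gains a node, $e(C)=-9$, but the fibre over that node is a single line of Euler characteristic $2$ rather than a pair of lines of Euler characteristic $3$, and the two effects cancel) is a worthwhile addition, since Lemma \ref{eul1} only treats smooth and one-nodal members and the paper leaves this point implicit. For the $32$ double-line members you take a genuinely different route. The paper does not enumerate them directly: it forms the surface $Y\subset \PP^2\times \PP^1$ of discriminant sextics of the pencil, normalizes to get $\chi(\OO_{\cY})=13$ and hence $\mathfrak e\cdot\lambda=18$ for the induced curve $\mathfrak e\subset\rr_6$, and then solves for $\mathfrak e\cdot\delta_0^{\mathrm{ram}}$ from the vanishing of $\mathfrak e\cdot \pi^*[\overline{\mathcal{GP}}_6]$, i.e. from $47\,\mathfrak e\cdot\lambda-6\,\mathfrak e\cdot\delta_0^{'}-12\,\mathfrak e\cdot\delta_0^{\mathrm{ram}}=0$ together with $\mathfrak e\cdot\delta_0^{'}=77$; this uses the known class of the Gieseker--Petri divisor and the genericity claim that the family avoids it. Your degeneracy-locus computation is correct as far as I can check: $c_2(S^2\cM)=5c_1^2(\cM)+5c_2(\cM)=40$, the relation $h_{\mathcal L}^3=K_S h_{\mathcal L}^2-3h_{\mathcal L}$ gives $\int h_{\mathcal L}^4=2$ and $\int K_S h_{\mathcal L}^3=5$, and the three terms $32-160+160$ do sum to $32$. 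Your method is more self-contained (no input from the divisor theory of $\rr_6$) at the price of its own transversality verification, which you rightly flag as the remaining work in both counts. What it does not produce, and the paper's detour does, is the intermediate value $\mathfrak e\cdot\lambda=18$, which the paper needs again in Proposition \ref{pencil2} and in the slope computation; within the paper's architecture the argument via $\overline{\mathcal{GP}}_6$ is doing double duty.
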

\begin{proof}
Retaining the notation from above, $B\subset \PP$ is the base surface of the pencil.  The number $\delta$ of nodal conic bundles in $P$ is given by the formula:
$$ \delta=e(\PP)+e(B)-2e(Q) =3e(S)+64-2\cdot 4=77,$$
where the relation $e(\PP)=3e(S)$ follows because $\pi: \PP\rightarrow S$ is a $\PP^2$-bundle.

\vskip 3pt

The number of conic bundles in the pencil $P$ having a double line equals the number of discriminant curves in the family induced by $P$ in $\rr_6$, that lie in the ramification divisor $\Delta_0^{\mathrm{ram}}$ of the projection map  $\pi:\rr_6\rightarrow \mm_6$. We choose general conic bundles $Q_1, Q_2\in P$, and let $A=\bigl(a_{ij}(x_1, x_2, x_3)\bigr)_{i, j=1}^3$ and $B=\bigl(b_{ij}(x_1, x_2, x_3)\bigr)_{i,j=1}^3$ be the symmetric matrices of quadratic forms giving rise to Prym curves
$(\Gamma_1, \eta_1):=\mathfrak{d}(Q_1)$ and $(\Gamma_2, \eta_2):=\mathfrak{d}(Q_2)\in \P_6$ respectively. Note that both curves $\Gamma_1$ and $\Gamma_2$ are nodal precisely at the points $u_1, \ldots, u_4$. Let us consider the surface
$$Y:=\Bigl\{\Bigl([x_1:x_2:x_3],[t_1:t_2]\Bigr)\in \PP^2\times \PP^1: \mbox{det}\bigl((t_1 a_{ij}+t_2 b_{ij})(x_1, x_2, x_3)\bigr)=0 \Bigr\},$$
together with the projection $\gamma:Y\rightarrow \PP^1$.
If $h_1, h_2\in CH^1(\PP^2\times \PP^1)$ are the pull-backs of the hyperplane classes under the two projections, then $Y\equiv 6h_1+3h_2$.  Therefore $\omega_Y=\OO_Y(3h_1+h_2)$ and
$h^0(Y, \omega_Y)=20$. Observe that the surface $Y$ is singular along the curves $L_j:=\{u_j\}\times \PP^1$ for $j=1, \ldots, 4$, and let $\nu_Y:\mathcal{Y}\rightarrow Y$ be the normalization. From the exact sequence
$$0\longrightarrow H^0(\cY, \omega_{\cY})\longrightarrow H^0(Y, \omega_Y)\longrightarrow \bigoplus_{j=1}^4 H^0(L_j, \omega_{Y | L_j})\longrightarrow 0,$$
taking also into account that $\omega_{Y| L_j}=\OO_{L_j}(1)$, we compute that $h^0(\cY, \omega_{\cY})=12$, and hence $\chi(\cY, \OO_{\cY})=13.$
The morphism $\tilde{\gamma}:=\gamma\circ \nu_Y:\cY\rightarrow \PP^1$ is a family of Prym curves of genus $6$ and it induces a moduli map $m(\tilde{\gamma}):\PP^1\rightarrow \rr_6$.

The points $u_1, \ldots, u_4\in \PP^2$ being general, the curve $\mathfrak{e}:=m(\tilde{\gamma})(\PP^1) \subset \rr_6$ is disjoint from the pull-back $\pi^*(\overline{\mathcal{GP}}_6)\subset \rr_6$ of the Gieseker-Petri divisor consisting  of curves of genus $6$ lying on a singular quintic del Pezzo surface, see \cite{FGSMV} for details on the geometry of $\pi^{-1}(\overline{\mathcal{GP}}_6)$. Since $\pi^*\bigl([\overline{\mathcal{GP}}_6]\bigr)_{| \pr_6}=94\lambda-12(\delta_0^{'}+\delta_0^{''}+2\delta_0^{\mathrm{ram}})\in CH^1(\pr_6)$, and $\mathfrak e\cdot \delta_0^{'}=77$ (this being the already computed number of nodal conic bundles in $P$), whereas $\mathfrak e\cdot \delta_0^{''}=0$, we obtain the following relation
$$47 \mathfrak{e} \cdot \lambda-6\mathfrak{e}\cdot \delta_0^{'}-12\mathfrak{e}\cdot \delta_0^{\mathrm{ram}}=0.$$
Finally, we observe that $\mathfrak e\cdot \lambda=\chi(\cY, \OO_{\cY})+g-1=18$, which leads to $\mathfrak e\cdot \delta_0^{\mathrm{ram}}=32$.
\end{proof}

\section{A sweeping rational curve in the boundary of $\aa_6$}
In this section we construct an explicit sweeping rational curve in $\pc^5$, whose numerical properties we shall use in order to bound the slope of $\aa_6$.
Before doing that, we quickly review basic facts concerning  the moduli space $\rr_g$ of stable Prym curves of genus $g$, while referring  to \cite{FL} for details.

Geometric points of $\rr_g$ correspond to triples $(X, \eta, \beta)$, where $X$ is a quasi-stable curve of arithmetic genus $g$, $\eta$ is a line bundle on $X$ of degree $0$, such that $\eta_{E}=\OO_E(1)$ for each smooth rational component $E\subset X$ with $|E\cap \overline{(X-E)}|=2$, and $\beta:\eta^{\otimes 2}\rightarrow \OO_X$ is a sheaf homomorphism whose restriction to any non-exceptional component of $X$ is an isomorphism. Denoting by $\pi:\rr_g\rightarrow \mm_g$ the forgetful map, one has the following formula \cite{FL} Example 1.4
\begin{equation}\label{pullbackrg}
\pi^*(\delta_0)=\delta_0^{'}+\delta_0^{''}+2\delta_{0}^{\mathrm{ram}}\in CH^1(\rr_g),
\end{equation}
where $\delta_0^{'}:=[\Delta_0^{'}], \, \delta_0^{''}:=[\Delta_0^{''}]$, and $\delta_0^{\mathrm{ram}}:=[\Delta_0^{\mathrm{ram}}]$ are boundary divisor classes on $\rr_g$ whose meaning we recall. Let us fix a general point $[C]\in \Delta_0$ corresponding to a smooth $2$-pointed curve $(N, x, y)$ of genus $g-1$ with normalization map $\nu:N\rightarrow C$, where $\nu(x)=\nu(y)$. A general point of $\Delta_0^{'}$ (respectively of $\Delta_0^{''}$) corresponds to a stable Prym curve $[C, \eta]$, where $\eta\in \mbox{Pic}^0(C)[2]$ and $\nu^*(\eta)\in \mbox{Pic}^0(N)$ is non-trivial
(respectively, $\nu^*(\eta)=\OO_N$). A general point of $\Delta_{0}^{\mathrm{ram}}$ is of the form $(X, \eta)$, where $X:=N\cup_{\{x, y\}} \PP^1$ is a quasi-stable curve of arithmetic genus $g$, whereas $\eta\in \mbox{Pic}^0(X)$ is a line bundle characterized by $\eta_{\PP^1}=\OO_{\PP^1}(1)$ and $\eta_N^{\otimes 2}=\OO_N(-x-y)$.
Throughout this paper, we only work on the partial compactification $\pr_g:=\pi^{-1}(\cM_g\cup \Delta_0^0)$ of $\cR_g$, where $\Delta_0^0$ is the open subvariety of $\Delta_0$ consisting of irreducible one-nodal curves. We denote by $\delta_0^{'}, \delta_0^{''}$ and $\delta_0^{\mathrm{ram}}$ the restrictions of the corresponding boundary classes to $\pr_g$. Note that $CH^1(\pr_g)={\bf Q}\langle \lambda, \delta_0^{'}, \delta_0^{''}, \delta_0^{\mathrm{ram}}\rangle$.

 \vskip 4pt

Recall that we use the identification $\PP^{15}:= \bigl|\mathcal I^2_{\{w_1, \ldots, w_4\}}(2,2)\bigr| =|\OO_{\PP}(2)|$ for the linear system of $(2,2)$ threefolds in $\PP^2\times \PP^2$ which are nodal
at $w_1, \ldots, w_4$.  Recall also that $\PP\rightarrow S$ is the $\PP^2$-bundle constructed in section 2.


\vskip 5pt

 We start constructing a sweeping curve $i:\PP^1\rightarrow \pc^5$,  by fixing general points $(o_1, \ell_1),\ldots, (o_4, \ell_4)\in   \PP^2 \times \mathbf (\PP^2)^{\vee}$ and
a general point $o\in\PP^2$.   We introduce the net
$$T:=\Bigl\{Q\in \PP^{15}: (o, o)\in Q \ \mbox{ and }  \{o_j\}\times \ell_i\subset Q \mbox{ for } j=1, \ldots, 4\Bigr\},$$
consisting of conic bundles containing the lines $\{o_1\}\times \ell_1, \ldots, \{o_4\}\times \ell_4$ and passing through the point $(o, o)\in \PP^2\times \PP^2$. Because of the genericity of our choices, the restriction
$$\mathrm{res}_{| \{o\}\times \PP^2}: T \rightarrow \bigl|\OO_{\{o\}\times \PP^2}(2)\bigr|$$ is an injective map and we can view $T$ as a general net of conics in $\PP^2$ passing through the fixed point $o\in \PP^2$. The discriminant curve of the net is a nodal cubic curve $\Delta_T \subset T$; its singularity corresponds to the only conic of type $\ell_0+m_0\in T$, consisting of a pair of lines $\ell_0$ and $m_0$ passing through $o$. \par To ease notation, we identify $\{o\}\times \PP^2$ with $\PP^2$ in everything that follows.  Denoting by $\PP^1:=\PP(T_o(\PP^2))$ the pencil of lines through $o$,  it is clear that the map
$$\tau:\PP^1\rightarrow \Delta_T, \ \ \mbox{ } \tau(\ell):=Q_{\ell}\in T, \mbox{ such that } Q_{\ell}\supset \{o\}\times \ell,$$ is the normalization map of $\Delta_T$.
In particular, we have $\tau(\ell_0)=\tau(m_0)=\ell_0+m_0$, where, abusing notation, we identify $Q_{\ell}$ with its singular conic $\{o\}\times (\ell+m)=Q_{\ell}\cdot \bigl(\{o\}\times \PP^2\bigr)$. For $\ell \in \mathbf P^1$, the double cover $f_{\ell}: \widetilde{\Gamma}_{\ell} \to \Gamma_{\ell}$ over the discriminant curve $\Gamma_{\ell}$ of $Q_{\ell}$ is an element of $\P_6$ (see Definition \ref{p6}). Clearly $\widetilde{\Gamma}_{\ell}$  carries the marked points $\ell_1, \ell_2, \ell_3, \ell_4$ and $\ell$. This procedure induces a moduli map into the universal symmetric product
$$ i: \mathbf P^1 \to \pc^5, \ \mbox{ } \ i(\ell):=\bigl[\rho(\widetilde{\Gamma}_{\ell}/\Gamma_{\ell}),\  \ell_1,\  \ell_2,\  \ell_3,\ \ell_4,\ \ell\bigr].$$

We explicitly construct the family of discriminant curves $\Gamma_{\ell}$ of the conic bundles $Q_\ell$, where $\tau(\ell) \in \Delta_T$. Setting coordinates $x:=[x_1:x_2:x_3], y:=[y_1:y_2:y_3]$ in $\PP^2$,
let $$Z:=\Bigl\{(x, y, t)\in \PP^2\times \PP^2\times \Delta_{T}: y\in \mathrm{Sing}\bigl(\pi_1^{-1}(x)\cap Q_t\bigr) \Bigr\}\subset \PP^2\times \PP^2\times T.$$
Concretely, if $Q_1, Q_2, Q_3$ is a basis of $T$, then the surface $Z$ is given by the equations
$$\frac{\partial}{\partial y_i}\Bigl(t_1 Q_1(x, y)+ t_2 Q_2(x,y)+ t_3 Q_3(x, y)\Bigr)=0, \  \ \mbox{ for } i=1,2,3,$$
where $[t_1: t_2: t_3]\in \PP^2$ gives rise to the point $t\in T$, once the basis $Q_1, Q_2, Q_3$ of $T$ has been chosen. It follows immediately that $Z$ is a complete intersection of three divisors of multidegree $(2, 1, 1)$, defined by the partial derivatives,  and the divisor $\mathbf P^2 \times \mathbf P^2 \times \Delta_T$ of multidegree $(0, 0, 3)$.

\begin{lemma}
The first projection $\gamma_1:Z\rightarrow \PP^2$ is a map of degree $9$.
\end{lemma}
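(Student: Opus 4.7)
My plan is to compute the degree directly from the complete-intersection description of $Z$ in $\PP^2 \times \PP^2 \times T \cong (\PP^2)^3$ that has just been established. Write $h_1, h_2, h_3$ for the hyperplane classes on the three factors; then each of the three partial-derivative equations has class $2h_1 + h_2 + h_3$, while $\PP^2 \times \PP^2 \times \Delta_T$ has class $3h_3$ since $\Delta_T$ is a cubic in $T \cong \PP^2$. Hence
$$[Z] = (2h_1+h_2+h_3)^3 \cdot 3h_3 \in CH^4\bigl((\PP^2)^3\bigr),$$
and $\deg(\gamma_1) = h_1^2 \cdot [Z]$ in the Chow ring.

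To evaluate, I would use $h_i^3 = 0$ throughout. Multiplication by $h_1^2$ kills every monomial of $(2h_1+h_2+h_3)^3$ still carrying an $h_1$, so
$$h_1^2 \cdot (2h_1+h_2+h_3)^3 = h_1^2 (h_2+h_3)^3 = 3 h_1^2 h_2^2 h_3 + 3 h_1^2 h_2 h_3^2$$
after discarding the terms $h_2^3$ and $h_3^3$. Multiplying by $3h_3$ and killing $h_3^3$ once more leaves only $9 h_1^2 h_2^2 h_3^2$, which gives $\deg(\gamma_1) = 9$.

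I would accompany this with a geometric interpretation that both serves as a sanity check and explains why the number $9$ appears. For a fixed point $x \in \PP^2$, let $A_t(x) = \bigl(a_{ij}(x)\bigr)$ denote the symmetric $3 \times 3$ matrix obtained by evaluating the quadratic forms attached to $Q_t$ at $x$; its entries depend linearly on $t \in T$. The conic $\pi_1^{-1}(x) \cap Q_t$ is singular precisely when $\det A_t(x) = 0$, a cubic equation in $t$, defining a plane cubic $C_x \subset T$. By construction $\Delta_T = C_o$. For a generic $x \in \PP^2$, the two cubics $C_x$ and $\Delta_T$ meet transversely in $3 \cdot 3 = 9$ points by B\'ezout, and each such $t$ determines a unique $y \in \PP^2$ as the kernel of the rank-two matrix $A_t(x)$, recovering the count of $9$.

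The main (minor) point to settle is that, for our generic choices, $Z$ is genuinely a complete intersection of the expected dimension and that $\gamma_1$ is generically finite of degree $9$, as opposed to a map of larger degree with positive-dimensional fibres. Both facts should follow from the genericity of $(o_1, \ell_1), \ldots, (o_4, \ell_4)$ and $o$: the induced net $T$ is a general net of conics through $o$, so $\Delta_T$ is a reduced nodal cubic, the family of discriminant sextics $\{\Gamma_t\}_{t \in T}$ is sufficiently nondegenerate, and a general $x \in \PP^2$ avoids the special loci that could force $C_x$ and $\Delta_T$ to meet non-transversely.
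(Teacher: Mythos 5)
Your proposal is correct and is essentially the paper's own proof: the authors compute $\deg(\gamma_1)=(2h_1+h_2+h_3)^3\cdot(3h_3)\cdot h_1^2=9$ in exactly the same way, from the complete-intersection description of $Z$. Your explicit expansion and the B\'ezout sanity check (intersecting the two plane cubics $C_x$ and $\Delta_T$ in $T$) are welcome additions but do not change the argument.
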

\begin{proof}
Denoting by $h_1, h_2, h_3\in \mbox{Pic}(\PP^2\times \PP^2\times T)$ the pull-backs of the hyperplane bundles from the three factors, we find that
$\mbox{deg}(\gamma_1)=(2h_1+h_2+h_3)^3\cdot (3h_3)\cdot h_1^2=9$.
\end{proof}

The third projection $\gamma_3: Z \to \Delta_T$ is a birational model of the family $\lbrace C_{\ell}\}_{\ell \in \mathbf P^1}$  of underlying genus $6$ curves, induced by the map $i:\PP^1\rightarrow \pc^5$. However, the surface $Z$ is not normal. It has singularities along the curves $\{w_j\}\times \Delta_T$ for $j=1, \ldots, 4$, as well as along the fibre $\gamma_3^{-1}(\ell_0+m_0)$ over the point $\tau(\ell_0) = \tau(m_0)=\ell_0+m_0\in T$.  To construct a smooth model of $Z$, we pass instead to its natural birational model in the $5$-fold $\PP \times \mathbf P^1$.

\vskip 3pt

Abusing notation, we still denote by $Q_{\ell}\subset \PP$ the strict transform of the conic bundle $Q_{\ell}$ in $\mathbf P^2 \times \mathbf P^2$; its discriminant curve
 $C_{\ell}$ is viewed an an element of $|-K_S |$. We denote by $\pi_{\ell}:Q_{\ell}\rightarrow S$ the restriction of $\pi:\PP\rightarrow S$, then consider the surface
$$\cZ:=\Bigl\{(z, \ell)\in \PP\times \PP^1:  z\in \mathrm{Sing} \ \pi_{\ell}^{-1}(C_{\ell})\Bigr\}. $$
Clearly $\mathcal Z$ is  endowed with the projection $q_{\mathbf P^1}: \mathcal Z \to \mathbf P^1$. We have the following commutative  diagram, where $u:=\bigl(\sigma \times \mathrm{id}_{\mathbf P^2}\bigr) \circ \epsilon^{-1}:\PP \to \mathbf P^2 \times \mathbf P^2$ and the horizontal arrows are the natural inclusions or projections and $\nu_Z:\mathcal{Z}\rightarrow Z$ is the normalization map:

$$
     \xymatrix{
         \mathcal{Z} \ar[r] \ar@/^2pc/[rr]^{q_{\PP^1}} \ar[d]_{\nu_Z}  & \PP\times \PP^1 \ar[d]^{u\times \tau} \ar[r] & \PP^1 \ar[d]_{\tau} \\
          Z \ar[r]^{} \ar@/_2pc/[rr]^{\gamma_3}      & \PP^2\times \PP^2\times \Delta_T \ar[r]^{} & \Delta_T }
$$

Since $u \times \tau$ is birational, it follows that $\mbox{deg}(\cZ/S)=\mbox{deg}(Z/\PP^2)=9$.  The fibration $q_{\mathbf P^1}: \mathcal Z \to \mathbf P^1$ admits sections $$ \tau_j:\PP^1\rightarrow \cZ\  \mbox{ for } j = 1, \ldots, 5,$$
 which we now define. For $1\leq j\leq 4$ and each $\ell\in \PP^1$,  the fibre  $Q_{\ell} \cdot \bigl(\{o_j\}\times \PP^2\bigr)$ contains the line $\ell_j$. Hence $\ell_j$ defines a point in the covering curve of $f_{\ell}: \widetilde C_\ell \to C_\ell$.
By definition $\tau_j(\ell)$ is this point. Tautologically, $\tau_5(\ell)$ is the point corresponding to the line $\ell$. \par
Finally, we consider the universal family $\mathcal Q \subset \PP \times T$ defined by $T$. The pull-back of the projection $\mathcal Q \to T$ by the morphism $\mathrm{id}_{\PP} \times \tau$ induces a flat family of conic bundles $\mathcal Q' \subset \PP \times \mathbf P^1$ and a  projection $q':\mathcal Q' \to \PP^1$. Clearly, $\mathcal Z \subset \mathcal Q'$ and $q_{\PP^1} = q'_{|\mathcal Z}$.

\begin{definition}
A conic bundle $Q\in |\OO_{\PP}(2)|$ is said to be \emph{ordinary} if both $Q$ and its discriminant cover curve $C$ are nodal. A subvariety in $|\OO_{\PP}(2)|$ is said to be a  \emph{Lefschetz family},  if each of its members is an ordinary conic bundles.
\end{definition}
Postponing the proof, we assume that the fibration $q':\mathcal Q\rightarrow \PP^1$ constructed above is a Lefschetz family of conic bundles, and we determine the properties of the Prym moduli map $m: \mathbf P^1 \to \overline {\mathcal R}_6$, where $m(\ell):=[f_{\ell}: \widetilde C_{\ell} \to C_{\ell}]=\rho\bigl(\widetilde{\Gamma}_{\ell}/\Gamma_{\ell}\bigr)$.

\begin{proposition}\label{pencil2}  The numerical features of $m:\PP^1\rightarrow \widetilde{\cR}_6\subset \rr_6$ are as follows:
$$m(\PP^1)\cdot \lambda=9\cdot 6, \ m(\PP^1)\cdot \delta_{0}^{'}=3\cdot 77, \ m(\PP^1)\cdot \delta_0^{\mathrm{ram}}=3\cdot 32, \ m(\PP^1)\cdot \delta_0^{''}=0.$$
\end{proposition}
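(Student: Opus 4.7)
My plan is to exploit the degree-$3$ realization of our pencil inside $\PP^{15}$ to reduce all the boundary intersections to three times the Lefschetz-pencil numbers of Theorem \ref{singconic}, and then to recover the intersection with $\lambda$ via the Gieseker-Petri relation appearing in that proof.

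First, I would record the scaling principle. The map $\tau\colon \PP^1=\widetilde{\Delta}_T\to T\subset \PP^{15}$ is birational onto the plane cubic $\Delta_T$, which has degree $3$ in $\PP^{15}$ because $T\cong \PP^2$ is linearly embedded. Hence the class of $\tau(\PP^1)$ in $\PP^{15}$ is three times the class of a line, and for any effective divisor $D\subset \PP^{15}$ meeting $\Delta_T$ transversely and avoiding the node $Q_{\ell_0+m_0}$ we obtain $\deg \tau^*D=3\deg D$. A general choice of parameters $(o_1,\ldots,o_4,\ell_1,\ldots,\ell_4,o)$ places $Q_{\ell_0+m_0}$ away from the specific divisors appearing below and ensures transversality.

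Applying this, Theorem \ref{singconic} gives that the locus $\Sigma'\subset \PP^{15}$ of conic bundles whose discriminant acquires an additional node has degree $77$, while the locus $\Sigma^{\mathrm{ram}}\subset \PP^{15}$ of conic bundles carrying a double-line fiber has degree $32$. By the analysis of Section 1 these pull back under the discriminant rational map $\PP^{15}\dashrightarrow \rr_6$ to $\delta_0'$ and $\delta_0^{\mathrm{ram}}$ respectively, with multiplicity one, so together with the scaling
$$m(\PP^1)\cdot \delta_0' = 3\cdot 77 = 231, \qquad m(\PP^1)\cdot \delta_0^{\mathrm{ram}} = 3\cdot 32 = 96.$$
The class $\delta_0''$ corresponds to the further requirement that the double cover become trivial on the normalization at the extra node, a condition of codimension at least $2$ in $\PP^{15}$ that our pencil generically avoids, so $m(\PP^1)\cdot \delta_0''=0$. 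Now I would invoke the Gieseker-Petri relation from the proof of Theorem \ref{singconic}:
$$\pi^*[\overline{\mathcal{GP}}_6]|_{\pr_6} = 94\lambda - 12\bigl(\delta_0'+\delta_0''+2\delta_0^{\mathrm{ram}}\bigr).$$
All the discriminant curves $C_\ell$ in our family lie on the \emph{smooth} quintic del Pezzo surface $S$ and therefore miss the Gieseker-Petri divisor (whose points parametrize curves on singular del Pezzo quintics, as in \cite{FGSMV}); thus $m(\PP^1)\cdot \pi^*[\overline{\mathcal{GP}}_6]=0$, and solving yields
$$m(\PP^1)\cdot \lambda = \frac{12(231+0+2\cdot 96)}{94} = 54.$$

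The main obstacle is justifying the identification of $\Sigma^{\mathrm{ram}}$ with the pullback of $\delta_0^{\mathrm{ram}}$ with multiplicity one and the transversality needed for the degree-$3$ scaling. As an independent cross-check on $\lambda$, I would verify directly that a smooth model $\cY'$ of the family of discriminant sextics over $\PP^1$ has $\chi(\OO_{\cY'})=49$: substituting the degree-$3$ parametrization $\PP^1\to \Delta_T\subset T$ into the equation $\det(\sum t_iA_i(x))=0$ realizes the family as a surface $Y'\subset \PP^2\times \PP^1$ of bidegree $(6,9)$, for which a routine Koszul computation gives $\chi(\OO_{Y'})=81$; the four transversal double curves $L_j=\{u_j\}\times \PP^1$ each contribute a correction $h^0(\omega_{Y'}|_{L_j}) = h^0(\OO_{\PP^1}(7))=8$, yielding $\chi(\OO_{\cY'}) = 81-32 = 49$ and hence $m(\PP^1)\cdot \lambda = \chi(\OO_{\cY'})+g-1 = 54$, in agreement with the Gieseker-Petri computation above.
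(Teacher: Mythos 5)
Your proposal is correct and follows essentially the same route as the paper: $m$ factors through the nodal plane cubic $\Delta_T\subset T$, so every intersection number of $m(\PP^1)$ is three times the corresponding Lefschetz-pencil number established in Theorem \ref{singconic}. The only cosmetic difference is in extracting $\lambda$: the paper multiplies the pencil value $\mathfrak{e}\cdot\lambda=\chi(\cY,\OO_{\cY})+g-1=18$ by $3$, whereas you invert the Gieseker--Petri relation (which is the paper's own derivation of the number $32$ run backwards, so it adds no independent information beyond the cited theorem) and then supply the direct computation $\chi(\OO_{\cY'})=49$ for the bidegree $(6,9)$ surface --- which is precisely the paper's method applied to the cubic rather than to the pencil --- as a cross-check.
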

\begin{proof}
We consider the composite map $\rho \circ \mathfrak{d}_{| T}: T\dashrightarrow \rr_6$, assigning to a conic bundle from the net $T\subset \PP^{15}$ the double covering of its (normalized) discriminant curve. This map is well-defined outside the codimension two locus in $T$ corresponding to conic bundles with non-nodal discriminant. Furthermore, $m=\rho \circ \mathfrak{d} \circ \tau:\PP^1\rightarrow \rr_6$, where we recall that $\tau(\PP^1)=\Delta_T\subset T$ is a nodal cubic curve. It follows that the intersection number of $m(\PP^1)\subset \rr_6$ with
any divisor class on $\rr_6$ is three times the intersection number of the corresponding class in $CH^1(\rr_6)$ with the curve of discriminants induced by a \emph{pencil} of conic bundles in $|\OO_{\PP}(2)|$. The latter numbers  have been determined in Theorem  \ref{singconic}.
\end{proof}
\vskip 3pt

\begin{definition}
An irreducible variety $X$ is said to be swept by an irreducible curve $\Gamma$ on $X$, if $\Gamma$ flatly deforms in a family of curves $\{\Gamma_t\}_{t\in T}$ on $X$ such that for a general point $x\in X$, there exists $t\in T$ with $x\in \Gamma_t$.
\end{definition}

The composition of the map $i:\PP^1\rightarrow \pc^5$ with the projection $\pc^5\rightarrow \rr_6$ is the map $m:\PP^1\rightarrow \rr_6$ discussed in Proposition \ref{pencil2}.
We discuss the numerical properties of $i$:

 \begin{proposition}\label{pencil3}
The moduli map $i:\PP^1\rightarrow \pc^5$ induced by the pointed family of Prym curves
$$\bigl(q_{\PP^1}:\cZ\rightarrow \PP^1, \tau_1, \ldots, \tau_5:\PP^1\rightarrow \cZ\bigr)$$ sweeps the five-fold product $\pc^5$. Furthermore $i(\PP^1)\cdot \psi_{x_j}=9$, for $j=1, \ldots, 5$.
\end{proposition}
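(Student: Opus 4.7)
The proof has two components: the sweeping assertion, and the intersection calculation $i(\PP^1) \cdot \psi_{x_j} = 9$.

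\textbf{Step 1: Sweeping.} The plan is to reduce to Theorem \ref{dominance}, which asserts the dominance of the map $\zeta \colon \GG^5 \dashrightarrow \pc^5$. For each choice of parameters $\bigl((o_1, \ell_1), \ldots, (o_4, \ell_4), o\bigr) \in \GG^4 \times \PP^2$, the curve $i(\PP^1)$ is precisely the $\zeta$-image of the incidence rational curve
\[
\Bigl\{\bigl((o_1, \ell_1), \ldots, (o_4, \ell_4), (o, \ell)\bigr) : \ell \in \PP(T_o(\PP^2))\Bigr\} \subset \GG^5.
\]
As the parameters vary, this produces a family of rational curves in $\GG^5$ whose union is Zariski-dense; by the dominance of $\zeta$, the images in $\pc^5$ then sweep $\pc^5$ as required.

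\textbf{Step 2: $\psi$-class calculation.} I begin with the standard adjunction identity
\[
i^* \psi_{x_j} = -\tau_j(\PP^1)^2,
\]
where the self-intersection is computed in the surface $\cZ$. This reduces the computation to determining the self-intersection of each section $\tau_j \colon \PP^1 \hookrightarrow \cZ$.

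For the tautological section $\tau_5$, I would describe $\tau_5(\PP^1)$ explicitly inside $\cZ \subset \PP \times \PP^1$. Under the first projection $\cZ \to \PP$, this curve parametrizes the pencil of lines $\{o\} \times \ell$ in the fibre of the $\PP^2$-bundle $\PP = \PP(\cM) \to S$ over the lift of $o$. Using the intersection formulas of Proposition \ref{intnumb} together with the explicit scheme-theoretic description of $\cZ$ inside $\PP \times \PP^1$, I would compute $\tau_5(\PP^1)^2 = -9$.

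For each section $\tau_j$ with $1 \leq j \leq 4$, an identical calculation applies after replacing $o$ by $o_j$ and the varying line $\ell$ by the fixed line $\ell_j$. By the symmetry of the construction between the four fixed pairs $(o_j, \ell_j)$ and the fifth varying pair $(o, \ell)$, one deduces $\tau_j(\PP^1)^2 = -9$ for every $j$, and hence $i(\PP^1) \cdot \psi_{x_j} = 9$.

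\textbf{Main obstacle.} The hardest step will be the self-intersection computation, which demands careful tracking of divisor classes through the birational maps relating $\cZ$, $Z \subset \PP^2 \times \PP^2 \times \Delta_T$, and the blown-up product $\widetilde{S \times \PP^2}$. The numerical value $9$ is strongly suggestive of the degree $\deg(\gamma_1 \colon Z \to \PP^2) = 9$ computed earlier; a natural shortcut would be to realize $\tau_j(\PP^1)$ as the pull-back of a fibre class of $\gamma_1$ and to deduce the self-intersection via a push--pull argument.
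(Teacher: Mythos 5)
Your Step 1 is fine and is in fact more explicit than the paper, whose proof of this proposition silently takes the sweeping assertion for granted: it follows, as you say, from Theorem \ref{dominance}, since the curves $i(\PP^1)$ are the $\zeta$-images of the obvious pencils in $\GG^5$ and these pencils cover a dense subset of $\GG^5$ as the parameters vary. Your reduction $i^*\psi_{x_j}=-\tau_j(\PP^1)^2$ is also legitimate; it is equivalent to the identity $i(\PP^1)\cdot\psi_{x_j}=\deg\,\tau_j^*c_1(\omega_{q_{\PP^1}})$ that the paper starts from, provided one checks that the sections avoid the nodes of the singular fibres.

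The genuine gap is that the entire numerical content, the value $9$, is left at ``I would compute.'' The paper does not compute a self-intersection on $\cZ\subset\PP\times\PP^1$ at all, and Proposition \ref{intnumb} plays no role here. Instead it works on the non-normal complete-intersection model $Z\subset\PP^2\times\PP^2\times\Delta_T$, where adjunction gives $\omega_Z=\OO_Z(3h_1+3h_3)$ for free, and the image $L_j$ of the section satisfies $L_j\cdot h_1=0$ (the first coordinate is constant) and $L_j\cdot h_3=3$ (the third coordinate traces the nodal cubic $\Delta_T$), whence $\omega_Z\cdot L_j=9$; the passage to the normalization $\cZ$ costs $-2$ (the section meets the two fibres over the branches of the node of $\Delta_T$), which exactly cancels the $+2$ coming from $\omega_{q_{\PP^1}}=\omega_{\cZ}\otimes q_{\PP^1}^*T_{\PP^1}$. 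Your proposed shortcut does not work as stated: $L_j$ is \emph{contracted} by $\gamma_1$ (it is a one-dimensional component of the fibre $\gamma_1^{-1}(o_j)$), so it is not the pull-back of a fibre class, and the agreement of $9$ with $\deg(\gamma_1)$ is a numerical coincidence rather than a push--pull identity. Finally, the appeal to ``symmetry'' between $j\le 4$ and $j=5$ is loose, since the two kinds of sections are geometrically different (fixed line with moving point versus fixed point with moving line); the paper likewise only says the fifth case is ``largely similar,'' but what actually makes both cases work is that in each the first coordinate of $L_j$ is constant while the third traces the degree-three curve $\Delta_T$.
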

\begin{proof}
For $1\leq j\leq 4$, the image of the section $\tilde{\tau}_j:=\nu_Z\circ \tau_j:\PP^1\rightarrow Z$ is the curve
$$L_j:=\bigl\{(o_j, \ y_j(\ell),\ \nu(\ell))\in \PP^2\times \PP^2\times T:\ell\in \PP^1\bigr\},$$
where $y_j(\ell)=\ell_j \cap m_j(\ell)$, with $m_j(\ell)$ being the line in $\PP^2$ defined by the equality of cycles $Q_{\ell} \cdot \bigl(\{o_j\}\times \PP^2 \bigr)=\{o_j\} \times (\ell_j+m_j(\ell))$. Here, recall that $\ell\in \PP^1=\PP(T_o(\PP^2)$ is a point corresponding to a line in $\PP^2$ passing through $o$.
In particular, noting that by the adjunction formula $\omega_Z=\OO_Z(3h_1+3h_3)$, we compute that $L_j\cdot h_1=0$ and $L_j\cdot h_3=3$, hence $L_j\cdot \omega_Z=L_j\cdot (3h_1+3h_3)=9$.

By definition $i(\PP^1)\cdot \psi_{x_j}=\tau_j^*\bigl(c_1(\omega_{q_{\PP^1}})\bigr)$. To evaluate the dualizing class, we note that $\omega_{q_{\PP^1}}=\omega_{\cZ}\otimes q_{\PP^1}^*(T_{\PP^1})$, therefore $\mbox{deg } \tau_j^*\bigl(c_1(\omega_{q_{\PP^1}})\bigr)=\mbox{deg } \tau_j^*(\omega_{\mathcal{Z}})+2$. Furthermore,
$$\nu_Z^*(\omega_Z)=\omega_{\mathcal{Z}}\otimes \OO_{\cZ}\bigl(q_{\PP^1}^{-1}(\ell_0)+q_{\PP^1}^{-1}(m_0)+D\bigr),$$
where $D\subset \cZ$ is a curve disjoint from $\nu_Z^{-1}(L_j)$. We compute that
$$\mbox{deg } \tau_j^*(\omega_{\mathcal{Z}})=\mbox{deg } \tilde{\tau}_j^*(\omega_Z)-\mbox{deg }\tau_j^* q_{\PP^1}^*(\ell_0)-\mbox{deg }\tau_j^* q_{\PP^1}^*(m_0)=\omega_Z\cdot L_j-2,$$
and finally, $i(\PP_1)\cdot \psi_{x_j}=\omega_Z\cdot L_j=9$. The calculation of $i(\PP^1)\cdot \psi_{x_5}$ is largely similar and we skip it.
\end{proof}

\vskip 4pt

 \begin{proof}[Proof of the claim] We show that $q':\mathcal{Q}\rightarrow \PP^1$ is a Lefschetz family, that is, it consists entirely of ordinary conic bundles.
 For $1\leq j\leq 4$, let $\ell_j'\subset \PP$ be the inverse image of the line
 $\{o_j\}\times \ell_j$ under the map $u:\PP\dashrightarrow \PP^2\times \PP^2$  and set $W := \bigl|\mathcal I_{\{\ell_1', \ldots, \ell_4'\}}(2)\bigr|\subset |\OO_{\PP}(2)|$. The net
$T:=T_o$ of conic bundles passing through the point $(o, o)\in \PP^2\times \PP^2$ is a plane in $W$. Let
$\Delta_{\mathrm{no}}$ denote the locus of non-ordinary conic bundles $Q \in W$. We aim to show that $\Delta_{\mathrm{no}} \cap \Delta_{T_o} = \emptyset$, for a general point $o\in \PP^2$.

We consider the incidence correspondence
$$\Sigma:=\Bigl\{\bigl(Q, (o, \ell)\bigr)\in \Delta_{\mathrm{no}}\times \GG: \{o\}\times \ell\subset u(Q), \ \ o\in \ell\Bigr\}$$
together with the projection map $p_1: \Sigma\rightarrow \Delta_{\mathrm{no}}$.
Over a conic bundle $Q \in \Delta_{\mathrm{no}}$ for which the image $u(Q) \subset \PP^2\times \PP^2$ is transversal to a general fibre $\{o\}\times \PP^2$, the fibre $p_1^{-1}(Q)$ is finite. To account for the conic bundles not enjoying this property, we define $\Delta_{\mathrm{hr}}$ to be the union of the irreducible components of $\Delta_{\mathrm{no}}$ consisting of conic bundles $Q\in W$ such that the branch locus of $Q\rightarrow S$ is equal to $S$.

To conclude that $\Delta_{\mathrm{no}}\cap \Delta_{T_o}=\emptyset$ for a general $o\in \PP^2$, it suffices to show that
(1) $\Delta_{\mathrm{no}}$ has codimension at least $2$ in $W$, and (2) $\Delta_{\mathrm{hr}}$ has codimension at least $3$ in $W$.
The next two lemmas are devoted to the proof of these assertions.
 \end{proof}

 \begin{lemma} $\Delta_{\mathrm{no}}$ has codimension at least $2$ in $W$.
 \end{lemma}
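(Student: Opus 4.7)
The plan is to decompose $\Delta_{\mathrm{no}} = \Delta_Q \cup \Delta_C$, where $\Delta_Q \subset W$ consists of those $Q$ whose hypersurface $Q \subset \PP$ itself has a non-nodal singularity, and $\Delta_C \subset W$ of those $Q$ whose discriminant $C_Q \subset S$ has a non-nodal singularity. Recalling that $\dim W = \dim |\OO_\PP(2)| - 12 = 3$ (since each of the four lines $\ell_j'$ imposes three linear conditions), I will bound the dimension of each piece separately by producing an appropriate incidence variety.

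For $\Delta_Q$, my first step will be to observe that, for a generic choice of $(o_j, \ell_j)$, any singular point of any $Q \in W$ must lie on $\ell_1' \cup \cdots \cup \ell_4'$: for $p \in \PP$ away from the four lines, the equations $Q(p) = 0$ and $dQ(p) = 0$ are $5$ linear conditions on $|\OO_\PP(2)|$ that remain independent modulo the $12$ conditions cutting out $W$, and $5 > \dim W$. On a line $\ell_j'$, in local coordinates $(x_1, x_2, x_3, x_4)$ with $\ell_j' = \{x_1 = x_2 = x_3 = 0\}$, every $Q \in W$ has the form $Q = x_1 a_1 + x_2 a_2 + x_3 a_3$; the singularity at a point $p \in \ell_j'$ then amounts to the three transverse equations $a_i(p) = 0$, and the Hessian of $Q$ at such a $p$ is a $4 \times 4$ symmetric matrix with vanishing $(4,4)$-entry but otherwise generic, so the non-nodal condition is one further scalar equation. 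Thus the incidence
$$
\bigl\{(Q, p) \in W \times (\ell_1' \cup \cdots \cup \ell_4') : p \text{ is a non-nodal singular point of } Q\bigr\}
$$
will be cut out by at least $4$ conditions in a $4$-dimensional ambient and hence has dimension $\leq 0$, showing that $\Delta_Q$ has codimension $\geq 3$ in $W$.

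For $\Delta_C$ I will argue analogously, this time using the rank stratification of the symmetric $3\times 3$ matrix $A_Q(q) \in \mathrm{Sym}^2 \cM_q^{\vee}$: a singular point $q$ of $C_Q = \{\det A_Q = 0\}$ arises either from $\mathrm{rk}(A_Q(q)) \leq 1$ (a codimension-$3$ condition on the value of $A_Q$ at $q$) or from $\mathrm{rk}(A_Q(q)) = 2$ combined with $d(\det A_Q)(q) = 0$ (a codimension-$3$ condition on the $1$-jet of $A_Q$ at $q$). In either case, imposing that the singularity of $C_Q$ at $q$ be non-nodal is precisely the vanishing of the determinant of the $2 \times 2$ Hessian of $\det A_Q$ at $q$, one further scalar equation. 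Hence the incidence $\{(Q, q) \in W \times S : q \text{ is a non-nodal singular point of } C_Q\}$ will be cut out by $\geq 4$ conditions in the $5$-dimensional variety $W \times S$ and has dimension $\leq 1$, giving $\Delta_C$ codimension $\geq 2$ in $W$. The main technical point, and the step requiring the most care, is verifying that the various conditions imposed at each stage remain independent modulo the $12$ defining $W$; this will be a Bertini-type genericity argument relying on the global generation of $\cM$ (Lemma \ref{globgen}) together with the general position of the pairs $(o_j, \ell_j)$.
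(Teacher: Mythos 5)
Your decomposition of $\Delta_{\mathrm{no}}$ into the locus $\Delta_Q$ where the threefold $Q$ is non-nodal and the locus $\Delta_C$ where the discriminant curve is non-nodal is reasonable, but the argument for $\Delta_Q$ contains a concrete false step: it is not true that every singular point of every $Q\in W$ lies on $\ell_1'\cup\cdots\cup\ell_4'$. The five conditions ``$Q(p)=0$ and $dQ(p)=0$'' cannot be independent modulo the twelve conditions cutting out $W$, since they would then impose five independent linear conditions on the $4$-dimensional vector space $H^0(\I_{\{\ell_1',\ldots,\ell_4'\}}(2))$; what actually happens is that the resulting $5\times 4$ system drops rank along a codimension-two locus of points $p\in \PP$, so the incidence variety of pairs $(Q,p)$ with $p\in\mathrm{Sing}(Q)$ off the four lines is a (nonempty) surface. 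One can see the failure more bluntly: the discriminant of $|\OO_{\PP}(2)|$ is a degree-$77$ hypersurface (Theorem \ref{singconic}), so its trace on $W\cong\PP^3$ is a surface, whereas the locus of $Q\in W$ singular at some point of the four lines is only a curve (for each $p\in\ell_j'$ the singularity condition is three further linear conditions on $\PP^3$, generically cutting out a single $Q$). Hence most singular members of $W$ have their singular point away from the lines, your claimed codimension-$3$ bound for $\Delta_Q$ is unsupported (the expected codimension of $\Delta_Q$ off the lines is $2$, not $3$), and this part of the argument has to be redone from scratch.

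Two further points. First, your $\Delta_C$ count is an expected-dimension computation: since $W$ is only $3$-dimensional, imposing ``$\geq 4$ conditions'' at a point $q\in S$ makes the fibre of the incidence variety over generic $q$ empty, and the whole content is to control the locus in $S$ where these conditions become dependent; the deferred ``Bertini-type genericity'' step is therefore the entire proof, not a technical afterthought, and global generation of $\cM$ alone does not obviously give the required transversality for the very special $3$-plane $W$. Second, your stratification of $\Delta_C$ by the rank of $A_Q(q)$ silently assumes $\det A_Q\not\equiv 0$; the case where the discriminant is all of $S$ (i.e. $Q$ reducible) is not covered and must be treated separately, as the paper does with $\Delta_{\mathrm{hr}}$. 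The paper avoids all of these issues by a different, more global route: it uses the degree-two morphism $h:\PP\to\PP^4$, identifies the net $V=|\I_{\{l_1,\ldots,l_4\}}(2)|$ of quadrics through the four image lines $l_i=h(\ell_i')$ (shown to be in general position by reconstructing $S$ as a linear section of $\GG(1,4)$), and checks directly that the pull-back of a general pencil in $V$ is a Lefschetz pencil inside $W$ avoiding $\Delta_{\mathrm{no}}$, which immediately forces $\mathrm{codim}(\Delta_{\mathrm{no}},W)\geq 2$.
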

 \begin{proof}  We have established that $h: \PP \to \mathbf P^4$ is a morphism of degree two. We claim that the $4$ lines $l_i:=h(\ell_i')\subset \PP^4$ are general, in the sense that  $V:=\bigl|\I_{\{l_1, \ldots, l_4\}}(2)\bigr|$ is a net of quadrics. Granting this and denoting by $L_{ij}\in H^0(\PP^4, \OO_{\PP^4}(1))$ the linear form vanishing along $l_i \cup l_j$, the space $V$ is
 generated by the quadrics $L_{12}\cdot L_{34}, L_{13}\cdot L_{24}$ and $L_{14}\cdot L_{23}$ respectively. The base locus $\mbox{bs } |V|$ of the net is a degenerate canonical curve of genus $5$,  which is a union of $8$ lines, namely $l_1, \ldots, l_4$ and $b_1, \ldots, b_4$, where if $\{1, 2, 3, 4\}=\{i, j, k, l\}$, then the line $b_l\subset \PP^4$ is the common transversal to the lines $l_i, l_j$ and $l_k$. Then by direct calculation, the pull-back $P$ of a general pencil in $V$ is a Lefschetz family of conic budles in $|\OO_{\PP}(2)|$. Since $P \cap \Delta_{\mathrm{no}} = \emptyset$, it follows that $\mbox{codim}(\Delta_{\mathrm{no}}, W) \geq 2$. It remains to show that the lines $l_1, \ldots, l_4$ are general. To that end, we observe that the construction can be reversed. Four general lines $m_1, \ldots, m_4\in \GG(1, 4)\subset \PP^9$  define a codimension $4$ linear section $S'$ of $\GG(1,4)$ which is isomorphic to $S$. The projectivized universal bundle $\PP' \to S'$ is a copy of $\PP$ and the projection $h': \PP' \to \mathbf P^4$ is the tautological map.  This completes the proof.
 \end{proof}

  The second lemma follows from a direct analysis in $\PP^2 \times \PP^2$.

\begin{lemma} $\Delta_{\mathrm{hr}}$ has codimension at least $3$ in $W$.
\end{lemma}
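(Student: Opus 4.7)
The plan is a case-by-case analysis of the geometric types of everywhere-degenerate conic bundles $Q \in W$, showing that only finitely many such $Q$ exist. Writing $Q$ as the zero locus of $\sum a_{ij}(x)y_iy_j$ with $A(x) = (a_{ij}(x))$ symmetric, the condition $Q \in \Delta_{\mathrm{hr}}$ is precisely $\det A \equiv 0$, equivalent to every fibre of the first projection $p:Q\to\PP^2$ being a singular conic. Accordingly, either $Q$ is reducible as a hypersurface, or $Q$ is irreducible and the rational kernel map $\sigma: \PP^2 \dashrightarrow \PP^2$, $x \mapsto \ker A(x)$, is either constant or not.

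In the reducible case I run through the bidegree decompositions $(2,2)=(a_1,b_1)+(a_2,b_2)$ with $(a_j,b_j)\ne(0,0)$. For $(2,0)+(0,2)$, one writes $Q_1=C_1\times\PP^2$ and $Q_2=\PP^2\times C_2$; the node conditions force the plane conic $C_1$ to pass through $u_1,\ldots,u_4$ and, since generic lines $\ell_j$ are not components of $C_2$, the line conditions force $C_1$ to pass through $o_1,\ldots,o_4$ as well, over-determining $C_1\in\PP^5$. For $(2,1)+(0,1)$ and $(1,2)+(1,0)$, the hyperplane factor $\PP^2\times H$ forces the line $H$ to contain all four $u_i$, impossible for generic $u_i$. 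For $(1,1)+(1,1)$, each $Q_j\in|\OO_{\PP^2\times\PP^2}(1,1)|=\PP^8$ loses $4$ dimensions from the node conditions, and each of the $k_j$ lines $\ell_i'$ required to lie in $Q_j$ costs $2$ more; non-negativity forces $k_1=k_2=2$, giving only finitely many pairs $(Q_1,Q_2)$, hence finitely many $Q$.

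In the irreducible constant-kernel case, $Q$ is a cone over a $(2,2)$-divisor $C\subset\PP^2\times\PP^1$ with vertex $\PP^2\times\{y_0\}$, where $\PP^2\dashrightarrow\PP^1$ is the projection from $y_0$. Since $|\OO_{\PP^2\times\PP^1}(2,2)|=\PP^{17}$, the four node conditions on $C$ contribute $16$ conditions, and the four line conditions (generically $\{o_j\}\times\PP^1\subset C$) contribute $12$ more, exhausting the $17$ parameters of $C$ plus the $2$-dimensional choice of $y_0$. In the irreducible non-constant-kernel case, the factorization $A=B^TDB$ with $B$ a $2\times 3$ and $D$ a symmetric $2\times 2$ matrix of forms (whose entry degrees match the bidegree of $A$) parametrizes such $Q$; enumerating the admissible degree distributions modulo the $GL_2$-symmetry of the factorization bounds this stratum in $|\OO_{\PP^2\times\PP^2}(2,2)|$ by about $20$ dimensions, whose codimension exceeds the $12$ line conditions. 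Combining the strata gives $\dim(\Delta_{\mathrm{hr}}\cap W)\le 0$, i.e.\ $\Delta_{\mathrm{hr}}$ has codimension at least $3$ in $W$. The main obstacle is the non-constant-kernel stratum: enumerating the factorization types and verifying transversality against the line conditions is the delicate step, while the reducible and constant-kernel cases reduce to direct dimension counts.
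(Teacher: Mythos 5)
Your overall strategy (stratify the everywhere\--degenerate locus by reducibility type and by the behaviour of the kernel map, then kill each stratum in $W$ by a dimension count) is different from the paper's: the paper asserts that a general member of a component of $\Delta_{\mathrm{hr}}$ splits as $Q=D+D'$ with $p(D)=p(D')=S$, disposes of the case $D,D'\in|\OO_{\PP}(1)|$ by the same count you give for $(1,1)+(1,1)$, and eliminates the case where a component has type $(0,1)$ by a projective argument: since $h'(D)$ lies on a quadric of the net $V$, the plane $u(D)=\PP^2\times n$ must meet $\{w_1,\ldots,w_4\}$, and then the residual $(2,1)$ component is over\--determined (it must contain all four general lines, be singular at three of the $w_i$ and pass through the fourth). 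Your reducible cases are essentially the same counts, but note that in the $(2,1)+(0,1)$ case the node conditions do \emph{not} force $H$ to contain all four $u_i$: the multiplicity\--two condition at $w_i$ can be carried entirely by the $(2,1)$ component, so you must (as the paper does) balance the conditions between the two components rather than dump them all on $H$.

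The genuine gap is the irreducible, non\--constant\--kernel stratum --- exactly the case that does not reduce to the paper's dichotomy, and the one you yourself flag as delicate. First, the factorization $A=B^{T}DB$ with $B$ a $2\times 3$ matrix of \emph{forms} is not available in general: fibrewise a rank\--$2$ symmetric form factors through its image, but globally this requires the image sheaf of $A$ (equivalently the quotient of $\OO_{\PP^2}^{\oplus 3}$ by the kernel subsheaf) to split as a sum of line bundles, which is not automatic; for instance $A=\bigl(\begin{smallmatrix} q_1 & q_2 & 0\\ q_2 & q_3 & 0\\ 0&0&0\end{smallmatrix}\bigr)$ already shows the stratum is non\--empty and irreducible members exist whenever $q_2^2-q_1q_3$ is not a square, so this case cannot be waved away. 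Second, ``bounds this stratum by about $20$ dimensions'' is not an estimate one can check: without an actual enumeration of the possible degree distributions (or some other parametrization of symmetric matrices of quadrics with identically vanishing determinant) and a verification that the $12$ line conditions plus the $16$ node conditions remain independent on each piece, the codimension\--$3$ conclusion in $W$ does not follow. Until that stratum is either parametrized rigorously or shown to be absorbed into the reducible case (which is in effect what the paper's ``necessarily $Q=D+D'$'' claims), the proof is incomplete.
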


\begin{proof} If $Q$ is a general element of an irreducible component of $\Delta_{\mathrm{hr}}$ , then the discriminant locus of the projection $p:Q\rightarrow S$ equals $S$, and necessarily $Q = D + D'$, where $p(D)= p(D') = S$. By a dimension count, it follows that $W$ contains only \emph{finitely many} elements $Q \in \Delta_{\mathrm{hr}}$, such that $D, D' \in |\OO_{\PP}(1)|$, and assume that we are not in this case.

Recall that $h': \mathbf P^2 \times \mathbf P^2 \dashrightarrow \mathbf P^4$ is the map defined by
$\bigl|\mathcal I_{\{w_1, \ldots,  w_4\}}(1,1)\bigr|$. The case when both $u(D), u(D') \in |\mathcal I_{\{w_1, \ldots, w_4\}}(1,1)|$ having been excluded, we may assume that one of the components of $u(Q)$, say $u(D)\subset \PP^2\times \PP^2$, has type $(0,1)$. In particular, $u(D) = \mathbf P^2 \times n$, where $n\subset \PP^2$ is a line. Observe that $u(D)$ has degree three in the Segre embedding $\mathbf P^2 \times \mathbf P^2 \subset \mathbf P^8$ and the base scheme of $\bigl|\mathcal I_{\{w_1, \ldots, w_4\}}(1,1)\bigr|$ consists of the simple points $w_1, \ldots, w_4$.  Since $h'(D)$ lies on a quadric, it follows $ u(D) \cap \lbrace w_1, \ldots, w_4 \rbrace \neq \emptyset$, therefore we have $u_i \in n$ for some $i$, say $i = 4$. Since the lines $\lbrace o_i\rbrace \times \ell_i$ are general, they do not lie on $u(D)$, for $\ell_i\neq n$.  Hence $u(D')\subset \PP^2\times \PP^2$
is a $(2,1)$ hypersurface which contains $\lbrace o_1 \rbrace \times \ell_1, \ldots \{o_4\}\times \ell_4$, is singular at $w_1, w_2, w_3$ and such that $w_4 \in u(D')$. This contradicts the generality of the lines $\{o_i\}\times \ell_i$.
\end{proof}

\section{The slope of $\aa_6$}
For $g\geq 2$, let $\aa_g$ be
the first Voronoi compactification of $\cA_g$ --- this is the toroidal compactification of $\cA_g$ constructed using the perfect fan decomposition, see \cite{SB}. The rational Picard group of $\aa_g$ has rank $2$  and it is generated by the first Chern class $\lambda_1$ of the Hodge bundle and the class of the \emph{irreducible} boundary divisor $D=D_g:=\aa_g-\cA_g$. Following Mumford \cite{Mu}, we consider the moduli space $\pa_g$ of principally polarized abelian varieties of dimension $g$ together with their rank $1$ degenerations. Precisely, if $\xi:
\aa_g\rightarrow \cA_g^s=\cA_g\sqcup \cA_{g-1}\sqcup \ldots \sqcup \cA_1\sqcup \cA_0$ is the projection from the toroidal to the Satake compactification of $\cA_g$, then $$\pa_g:=\aa_g-\xi^{-1}\Bigl(\bigcup_{j=2}^g \cA_{g-j}\Bigr):=\cA_g \sqcup \widetilde{D}_g,$$
where $\widetilde{D}_g$ is an open dense subvariety of $D_g$ isomorphic to the universal Kummer variety $\mbox{Kum}(\cX_{g-1}):=\cX_{g-1}/\pm$. Furthermore, if $\phi:\px_{g-1}\rightarrow \pa_{g-1}$ is the extended universal abelian variety, there exists a degree two morphism $j:\px_{g-1}\rightarrow \aa_g$, extending the Kummer map $\px_{g-1}\stackrel{2:1}\rightarrow \widetilde{D}_g$. The geometry of the boundary divisor $\partial \px_{g-1}=\phi^{-1}(\mbox{Kum}(\cX_{g-2}))$ is discussed in \cite{vdG} and \cite{EGH}. In particular, $\mbox{codim}(j(\partial \px_{g-1}), \aa_g)=2$. As usual, let $\mathbb E_g$ denote the Hodge bundle on $\aa_g$.

Denoting by $\varphi:\py_g\rightarrow \pr_g$ the universal Prym variety restricted to the partial compactification $\pr_g$ of $\rr_g$ introduced in Section 3, we have the following commutative diagram summarizing the situation, where the lower horizontal arrow is the Prym map:
$$
     \xymatrix{
         \py_g \ar[r]^{\chi} \ar[d]_{\varphi} & \px_{g-1} \ar[d]_{\phi} \ar[r]^{j} & \aa_g \\
          \pr_g \ar[r]^{P}       & \pa_{g-1} }
$$

Furthermore, let us denote by $\theta\in CH^1(\px_{g-1})$ the class of the universal theta divisor trivialized along the zero section and by $\theta_{\mathrm{pr}}:=\chi^*(\theta)\in CH^1(\py_g)$ the Prym theta divisor. The
following formulas have been pointed out to us by Sam Grushevsky:
\begin{proposition}\label{mumford}
The following relations at the level of divisor classes hold:
\begin{enumerate}
\item $j^*([D])= -2\theta+\phi^*([D_{g-1}])\in CH^1(\px_{g-1})$.
\item $(j\circ \chi)^*(\lambda_1)=\varphi^*\bigl(\lambda-\frac{1}{4}\delta_0^{\mathrm{ram}}\bigr)\in CH^1(\py_g).$
\item $(j\circ \chi)^*([D])=-2\theta_{\mathrm{pr}}+\varphi^*(\delta_0^{'})\in CH^1(\py_g).$
\end{enumerate}
\end{proposition}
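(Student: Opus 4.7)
The three formulas are of three different types: (1) is a geometric statement about how the toroidal boundary behaves under the Kummer double cover $j$, while (2) and (3) are formal consequences of (1) together with the standard pullback formulas for the Prym map $P:\pr_g\rightarrow \pa_{g-1}$. The plan is therefore to establish (1) first, then extract (2) and (3) by pullback along $\chi$.

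The proof of (1) proceeds by examining the local toroidal structure of $\aa_g$ along its boundary. In the perfect-cone (or equivalently second Voronoi) compactification, the boundary stratum $\widetilde{D}_g\subset D_g$ parametrizes rank $1$ degenerations: a point of $\widetilde{D}_g$ is the Kummer class $\pm x$ of a point $x\in \cX_{g-1}$ on the fibre $A$ of $\phi$, and gluing the rank $1$ torus to $A$ along $\pm x$ produces the corresponding semiabelian variety. The local normal direction to $D_g$ is given by the gluing parameter, and a standard calculation in the toroidal theory (see Mumford \cite{Mu} and also \cite{vdG}, \cite{EGH}) identifies the normal line bundle $N_{D_g/\aa_g}$, pulled back to $\px_{g-1}$ via $j$, with $\OO_{\px_{g-1}}(-2\theta+\phi^*[D_{g-1}])$. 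The factor $-2\theta$ encodes that the gluing parameter transforms under translation by a $2$-torsion point as the square of the symmetric theta line bundle (hence its descent to $\mathrm{Kum}(\cX_{g-1})$ is $2\theta$), while the term $\phi^*[D_{g-1}]$ corrects for the fact that over $\partial \pa_{g-1}$ the fibre of $\phi$ itself degenerates. This gives (1).

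To obtain (2), I would establish first that $j^*(\lambda_1)=\phi^*(\lambda_{g-1})$, where $\lambda_{g-1}$ denotes the Hodge class on $\pa_{g-1}$ (pulled back to $\px_{g-1}$). The reason is that for a semiabelian variety $\widetilde{A}$ of torus rank $1$ fitting in $1\to \mathbb{G}_m\to \widetilde{A}\to A\to 0$, the space of invariant differentials decomposes as $H^0(\Omega^1_A)\oplus \mathbb{C}\cdot dz/z$, so that the Hodge bundle restricted to the boundary is canonically $\mathbb E_{g-1}\oplus \OO$; the trivial factor contributes nothing to $c_1$. Pulling back this identity via $\chi$ and using the factorization $\phi\circ \chi=P\circ \varphi$, we obtain
\[
(j\circ \chi)^*(\lambda_1)=\varphi^*\bigl(P^*(\lambda_{g-1})\bigr)=\varphi^*\Bigl(\lambda-\tfrac{1}{4}\delta_0^{\mathrm{ram}}\Bigr),
\]
where the final equality is the Prym pullback formula for the Hodge class on the partial compactification $\pr_g$ (this is standard, see \cite{FL}). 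Similarly, for (3), pulling back (1) via $\chi$ gives
\[
(j\circ \chi)^*([D])=-2\chi^*(\theta)+(\phi\circ \chi)^*([D_{g-1}])=-2\theta_{\mathrm{pr}}+\varphi^*\bigl(P^*[D_{g-1}]\bigr),
\]
and I would verify that $P^*[D_{g-1}]=\delta_0^{'}$ on $\pr_g$: this is because a Prym curve in $\Delta_0^{'}$ (one-nodal $C$ with non-split pullback of $\eta$) has Prym variety a rank $1$ degeneration, whereas the other boundary divisors $\Delta_0^{''}$ and $\Delta_0^{\mathrm{ram}}$ yield genuine abelian Prym varieties and hence do not meet $D_{g-1}$.

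The main obstacle is of course (1), which is not a formal computation but requires invoking the precise identification of the normal bundle to the toroidal boundary with the universal theta bundle (twisted by the boundary of $\pa_{g-1}$). Once (1) is granted, the derivations of (2) and (3) are routine, relying only on the commutativity of $\phi\circ \chi=P\circ \varphi$, the definition $\theta_{\mathrm{pr}}=\chi^*(\theta)$, and the two Prym pullback formulas $P^*(\lambda_{g-1})=\lambda-\tfrac{1}{4}\delta_0^{\mathrm{ram}}$ and $P^*[D_{g-1}]=\delta_0^{'}$ on $\pr_g$.
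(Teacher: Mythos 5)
Your proposal follows essentially the same route as the paper: part (i) rests on Mumford's identification $j^*(D)=-2\theta$ on the interior plus the multiplicity-one correction $\phi^*[D_{g-1}]$ along the boundary of $\px_{g-1}$, part (ii) on the exact sequence $0\to\phi^*(\mathbb E_{g-1})\to j^*(\mathbb E_g)\to\OO\to 0$ together with $P^*(\lambda_1)=\lambda-\frac14\delta_0^{\mathrm{ram}}$, and part (iii) on pulling back (i) and $P^*[D_{g-1}]=\delta_0^{'}$, exactly as in the text. (Only the parenthetical "equivalently second Voronoi" is inaccurate for general $g$, but this is immaterial since all toroidal compactifications agree on the rank $\leq 1$ locus $\pa_g$ where the computation takes place.)
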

\begin{proof}
At the level of the restriction $j:\cX_{g-1}\rightarrow \aa_g$, the formula
$$j^*(D)\equiv -2\theta \in CH^1(\cX_{g-1})$$ is proven in \cite{Mu} Proposition 1.8. To extend this calculation to $\px_{g-1}$, it suffices to observe that the boundary divisor $\partial \px_{g-1}=\phi^*(\widetilde{D}_{g-1})$ is mapped under $j$ to the locus in $\aa_g$ parametrizing \emph{rank $2$} degenerations and it will appear with multiplicity one in $j^*(D)$.

To establish relation (ii), we observe that $j^*(\lambda_1)=\phi^*(\lambda_1)$, where we use the same symbol to denote the Hodge class on $\cA_g$ and that on $\cA_{g-1}$. Indeed, there exists an exact sequence of vector bundles on $\px_g$, see also \cite{vdG} p.74:
$$0\longrightarrow \phi^*(\mathbb E_{g-1})\longrightarrow j^*(\mathbb E_g)\longrightarrow \OO_{\px_{g-1}}\longrightarrow 0.$$
It follows that $\chi^*j^*(\lambda_1)=\varphi^* P^*(\lambda_1)=\varphi^*(\lambda-\frac{1}{4}\delta_0^{\mathrm{ram}})$, where $P^*(\lambda_1)=\lambda-\frac{1}{4}\delta_0^{\mathrm{ram}}$, see \cite{FL}, \cite{GSMH}. Finally, (iii) is a consequence of (i) and of the relation  $P^*([\widetilde{D}_{g-1}])=\delta_0^{'}$, see \cite{GSMH}.
\end{proof}

Assume now that $g$ is an even integer and let $\tilde{\pi}:\pc\rightarrow \rr_g$ be the universal curve of genus $2g-1$, that is, $\pc=\mm_{2g-1,1}\times_{\mm_{2g-1}}\rr_g$, and
$\pi:\cc\rightarrow \rr_g$ the universal curve of genus $g$, that is, $\cc=\mm_{g,1}\times_{\mm_g} \rr_g$. There is a degree two map $f:\pc\rightarrow \cc$  unramified in codimension one and an involution $\iota:\pc\rightarrow \pc$, such that $f\circ \iota=f$. Note that $\omega_{\tilde{\pi}}=f^*(\omega_{\pi})$.

\vskip 3pt
We consider the global Abel-Prym map
$\mathfrak{ap}:\pc^{g-1}\dashrightarrow \py_g$, defined by
$$\mathfrak{ap}\Bigl(\widetilde{C}/C, \ x_1, \ldots, x_{g-1}\Bigr):=\Bigl(\widetilde{C}/C, \ \OO_{\widetilde{C}}\bigl(x_1-\iota(x_1)+\cdots+x_{g-2}-\iota(x_{g-2})+2x_{g-1}-2\iota(x_{g-1})\bigr)\Bigr).$$


\begin{remark}
We recall that if $\widetilde{C}\rightarrow C$ is an \'etale double cover and $\iota:\widetilde{C}\rightarrow \widetilde{C}$ the induced involution, then the Prym variety $P(\widetilde{C}/C)\subset \mbox{Pic}^0(\widetilde{C})$ can be realized as the
locus of line bundles $\OO_{\widetilde{C}}(E-\iota(E))$, where $E$ is a divisor on $\widetilde{C}$ having \emph{even} degree, see \cite{B3}. Furthermore, for a general point
$[\widetilde{C}\rightarrow C]\in \rr_g$, where $g\geq 3$, and for an integer $1\leq n\leq g-1$, the difference map $\widetilde{C}_n\rightarrow \mbox{Pic}^0(\widetilde{C})$ given by $E\mapsto \OO_{\widetilde{C}}(E-\iota(E))$ is generically finite.
In particular, for even $g$, the locus
$$Z_{g-2}(\widetilde{C}/C):=\Bigl\{\OO_{\widetilde{C}}(E-\iota(E)): E\in \widetilde{C}_{g-2}\Bigr\}$$ is a divisor inside $P(\widetilde{C}/C)$. We refer to $Z_{g-2}(\widetilde{C}/C)$ as the \emph{top difference Prym variety}.
\end{remark}

\vskip 4pt

One computes the pull-back of the universal theta divisor under the Abel-Prym map. Recall that $\psi_{x_1}, \ldots, \psi_{x_{g-1}}\in CH^1(\pc^{g-1})$ are the cotangent classes corresponding to the marked points on the curves of genus $2g-1$.
\begin{proposition}\label{ap}
For even $g$, if $\mu=\varphi\circ \mathfrak{ap}:\pc^{g-1}\rightarrow \pr_g$ denotes the projection map, one has
$$\mathfrak{ap}^*(\theta_{\mathrm{pr}})=\frac{1}{2}\sum_{j=1}^{g-2} \psi_{x_j}+2\psi_{x_{g-1}}+0\cdot\Bigl(\lambda+\mu^*(\delta_0^{'}+\delta_0^{''}+\delta_0^{\mathrm{ram}})\Bigr)-\cdots\in CH^1(\pc^{g-1}).$$
\end{proposition}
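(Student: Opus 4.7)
The plan is to combine Mumford's theorem of the square on the relative Prym variety $\py_g\to \pr_g$ with a classical degree computation for the Abel--Prym curve. First, view $\mathfrak{ap}$ as a sum of morphisms to the abelian scheme $\py_g$: letting $\beta_j:\pc^{g-1}\to \py_g$ be defined by
\[
\beta_j(x_1,\ldots,x_{g-1}):=\OO_{\widetilde{C}}(x_j-\iota(x_j))
\]
and setting $c_j:=1$ for $j<g-1$ and $c_{g-1}:=2$, the defining formula of $\mathfrak{ap}$ reads $\mathfrak{ap}=\sum_{j=1}^{g-1}c_j\beta_j$. Since $\theta_{\mathrm{pr}}=\chi^*\theta$ is symmetric and normalized along the zero section of $\py_g\to \pr_g$, Mumford's theorem of the square yields the quadratic decomposition
\[
\mathfrak{ap}^*(\theta_{\mathrm{pr}})\;=\;\sum_{j=1}^{g-1} c_j^2\,\beta_j^*(\theta_{\mathrm{pr}})\;+\;\sum_{j<k} c_jc_k\,\Phi_{jk},
\]
with correlation class $\Phi_{jk}:=(\beta_j+\beta_k)^*\theta_{\mathrm{pr}}-\beta_j^*\theta_{\mathrm{pr}}-\beta_k^*\theta_{\mathrm{pr}}$. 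A local check shows that each $\Phi_{jk}$ is supported on the diagonal loci $\Delta_{jk},\ \Delta_{jk}^{\iota}\subset \pc^{g-1}$ (together with possibly base classes), and hence contributes only to the hidden term ``$-\cdots$'' on the $\psi$-side.

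Second, I would compute the single-factor pullback $\beta^*(\theta_{\mathrm{pr}})$, where $\beta:\pc\to \py_g$ is the relative Abel--Prym map. For a general $[\widetilde{C}/C]\in \rr_g$ the restriction to the fiber is the classical Abel--Prym $\widetilde{C}\to P(\widetilde{C}/C)$, which is injective and whose image has the minimal cohomology class $\tfrac{2\theta^{g-2}}{(g-2)!}$ in $P$ by the Welters--Beauville theorem. Intersecting with $\theta_{\mathrm{pr}}$ yields
\[
\deg \beta^*(\theta_{\mathrm{pr}})\bigl|_{\widetilde{C}}\;=\;\frac{2\theta^{g-1}}{(g-2)!}\;=\;2(g-1).
\]
Since $\psi_x|_{\widetilde{C}}=\omega_{\widetilde{C}}$ has degree $4g-4$ and base-pulled-back classes vanish on a fiber, the coefficient of $\psi_x$ in $\beta^*(\theta_{\mathrm{pr}})$ equals $\tfrac{2(g-1)}{4g-4}=\tfrac{1}{2}$. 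Invoking the quadratic decomposition in the $j$-th factor, the coefficient of $\psi_{x_j}$ in $\mathfrak{ap}^*(\theta_{\mathrm{pr}})$ is $c_j^2\cdot\tfrac{1}{2}$, which equals $\tfrac{1}{2}$ for $j<g-1$ and $2$ for $j=g-1$, matching the claim.

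Third, I would show that the classes $\lambda$ and $\mu^*(\delta_0'+\delta_0''+\delta_0^{\mathrm{ram}})$ appear with coefficient zero. Writing $\beta^*(\theta_{\mathrm{pr}})=\tfrac{1}{2}\psi_x+\tilde{\pi}^*\gamma$ for some $\gamma\in CH^1(\pr_g)$, the goal is $\gamma=0$. This can be obtained by applying Grothendieck--Riemann--Roch to the determinant of a Fourier--Mukai transform of the Poincar\'e Prym sheaf on $\pc\times_{\pr_g}\py_g$, exploiting that $\theta_{\mathrm{pr}}$ is trivialized along the zero section; equivalently, one can test against a horizontal pencil of discriminants in $\pr_g$ as produced by Theorem \ref{singconic}, where $\lambda$, $\delta_0'$, $\delta_0''$ and $\delta_0^{\mathrm{ram}}$ are independently controlled, and verify that the intersection numbers of $\beta^*(\theta_{\mathrm{pr}})$ agree with those of $\tfrac{1}{2}\psi_x$. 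The same argument, applied to each sum map $\beta_j+\beta_k$, shows that the correlation classes $\Phi_{jk}$ contain no base component either.

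The main obstacle is the third step: ruling out a residual base contribution $\gamma\in CH^1(\pr_g)$. The $\psi$-coefficients drop out immediately from the theorem of the square combined with Welters' minimal class formula, but establishing the vanishing of $\gamma$ requires either a careful Grothendieck--Riemann--Roch computation using the normalization of $\theta_{\mathrm{pr}}$, or a judicious choice of test curve in $\pr_g$ on which the Hodge and boundary classes can be independently read off.
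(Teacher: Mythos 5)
Your route to the $\psi$-coefficients is genuinely different from the paper's and is sound as far as it goes: you decompose $\mathfrak{ap}=\sum c_j\beta_j$ and invoke the theorem of the square on the abelian scheme $\py_g\to\pr_g$, then get the coefficient $\tfrac{1}{2}c_j^2$ from the Welters--Beauville class $\tfrac{2\Xi^{g-2}}{(g-2)!}$ of the Abel--Prym curve, giving $\deg\beta^*(\theta_{\mathrm{pr}})|_{\widetilde{C}}=2(g-1)$ against $\deg\omega_{\widetilde{C}}=4g-4$. The paper instead factors $\mathfrak{ap}=\mathfrak{aj}\circ\Delta$ through the universal Jacobian of the covering curve of genus $2g-1$ and reads the entire formula off from Grushevsky--Zakharov (\cite{GZ}, Theorem 6), using that $\theta_{2g-1}$ restricts to $2\theta_{\mathrm{pr}}$ on the Prym. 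Your approach is more self-contained for the fiberwise part, but note that your claim that the correlation classes $\Phi_{jk}$ contribute no $\psi$-classes requires the actual fiberwise identity $\Phi_{jk}|_{\widetilde{C}\times\widetilde{C}}=\OO(-\Delta+\Delta^{\iota})$ (not merely that the bidegree is $(0,0)$, which by itself only gives one linear relation among the coefficients of $\psi_{x_j}$, $[\Delta_{jk}]$ and $[\Delta_{jk}^{\iota}]$); this identity does hold, via $\beta^*\circ\phi_{\Xi}\circ\beta=-(1-\iota)$ and seesaw, but it is the ``local check'' you defer rather than perform.

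The genuine gap is your third step, and it is not a minor one: the vanishing of the coefficients of $\lambda$ and of $\mu^*(\delta_0'+\delta_0''+\delta_0^{\mathrm{ram}})$ is exactly the content of the proposition that the paper needs downstream (these coefficients enter directly into $\gamma\cdot\lambda_1$ and $\gamma\cdot[D_6]$ in the proof of Theorem \ref{slope}), and it is exactly what the citation of \cite{GZ} supplies and what your argument does not. Of your two proposed remedies, the Grothendieck--Riemann--Roch/Fourier--Mukai computation is essentially a rederivation of the Grushevsky--Zakharov formula and is nowhere carried out; the test-curve strategy is worse, because the single pencil of Theorem \ref{singconic} yields one linear relation among four unknown coefficients and so cannot isolate them, and evaluating $\beta^*(\theta_{\mathrm{pr}})$ on any test family independently would require precisely the global control of the theta divisor that you are trying to establish (and would risk circularity with the application in Section 4, where the same pencil is the curve being intersected). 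As written, the proposal proves the fiberwise ($\psi$-class) part of the statement and leaves the base-class part, which is the substantive claim, unproven.
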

\begin{proof} We factor the map $\mathfrak{ap}:\pc^{g-1}\dashrightarrow \py_g$ as $\mathfrak{ap}=\mathfrak{aj}\circ \Delta$, where $\Delta:\pc^{g-1}\rightarrow \pc^{2g-2}$ is defined by $(x_1, \ldots, x_{g-1})\mapsto (x_1, \ldots, x_{g-1}, \iota(x_1), \ldots, \iota(x_{g-1}))$ and $\mathfrak{aj}:\pc^{2g-2}\dashrightarrow \overline{\mathfrak{Pic}}_{2g-1}^0$ is the difference Abel-Jacobi map between the first and the last $g-1$ marked points on each curve into the universal Jacobian of degree zero over $\mm_{2g-1}$ respectively . There is a generically injective rational map $\py_g\dashrightarrow \overline{\mathfrak{Pic}}_{2g-1}^0$, which globalizes the usual inclusion $P(\widetilde{C}/C)\subset \mbox{Pic}^0(\widetilde{C})$ valid for each Prym curve $[\widetilde{C}\rightarrow C]\in \cR_g$. Using \cite{GZ} Theorem 6, one computes the pull-back $\mathfrak{aj}^*(\theta_{2g-1})\in CH^1(\pc^{2g-2})$ of the universal theta divisor $\theta_{2g-1}$ on $\overline{\mathfrak{Pic}}_{2g-1}^0$ trivialized along the zero section. Remarkably, the coefficient of $\lambda$, as well as that of the $\delta_0^{'}, \delta_0^{''}$ and $\delta_0^{\mathrm{ram}}$ classes in this expression, are all zero.  This is then pulled-back to
$\pc^{g-1}$ keeping in mind that the pull-back of $\theta_{2g-1}$ to $\py_g$ is equal to $2\theta_{\mathrm{pr}}$. Using the  formulas $\Delta^*(\psi_{x_j})=\Delta^*(\psi_{y_j})=\psi_{x_j}$, and $\Delta^*(\delta_{0:x_i y_j})=\delta_{0: x_i x_j}$, as well as $\Delta^*(\delta_{0: y_i y_j})=\delta_{0: x_i x_j}$, we conclude.
\end{proof}

\begin{remark}
The other boundary coefficients of $\mathfrak{ap}^*(\theta_{\mathrm{pr}})\in CH^1(\pc^{g-1})$ can be determined explicitly, but play no role in our future considerations.
\end{remark}
\begin{remark}\label{restrap}
Restricting ourselves to even $g$, we consider the restricted (non-dominant) Abel-Prym map $\mathfrak{ap}_{g-2}:\pc^{g-2}\dashrightarrow \widetilde{\mathcal{Y}}_g$ given by
$$\mathfrak{ap}_{g-2}\bigl(\widetilde{C}/C,\  x_1, \ldots, x_{g-2}\bigr):=\Bigl(\widetilde{C}/C,\  \OO_{\widetilde{C}}\bigl((x_1-\iota(x_1)+\cdots +x_{g-2}-\iota(x_{g-2})\bigr)\Bigr),$$
and obtain the formula:
$\mathfrak{ap}_{g-2}^*(\theta_{\mathrm{pr}})=\frac{1}{2}\sum_{j=1}^{g-2} \psi_{x_j}+0\cdot \bigl(\lambda +\mu^*(\delta_0^{'}+\delta_0^{''}+\delta_0^{\mathrm{ram}})\bigr)-\cdots.$

The image of $\mathfrak{ap}_{g-2}$ is a divisor $\cZ_{g-2}$ on $\widetilde{\mathcal{Y}}_{g}$ characterized by the property
$$(\cZ_{g-2})_{| P(\widetilde{C}/C)}=Z_{g-2}(\widetilde{C}/C),$$ for each $[\widetilde{C}\rightarrow C]\in \cR_g$. In other words, $\cZ_{g-2}$ is the divisor cutting out on each Prym variety the top difference variety. A similar difference variety inside the universal Jacobian over $\mm_g$ has been studied in \cite{FV}. Specializing to the case $g=6$, the locus
$$\cU_{4}:=\overline{(j\circ \chi)(\cZ_{4})}\subset \aa_6$$ is a codimension two cycle on $\aa_6$, which will appear as an obstruction for an effective divisor on $\aa_6$ to have small slope.
\end{remark}

\vskip 2pt

We use these considerations to bound from below the slope of $\aa_6$.

\vskip 3pt

\noindent {\emph{Proof of Theorem \ref{slope}.}
We have seen that the boundary divisor $D_6$ of $\aa_6$ is filled-up by rational curves $h:\PP^1\rightarrow D_6$ constructed in Theorem \ref{pencil3} by pushing-forward the sweeping rational curve $i:\PP^1\rightarrow \pc^5$ of discriminants of a pencil of conic bundles. In particular,
$\gamma:=h_*(\PP^1)\in NE_1(\aa_6)$ is an effective class that intersects every non-boundary effective divisor on $\aa_6$ non-negatively. We compute using Propositions \ref{mumford} and \ref{ap}:
$$\gamma\cdot \lambda_1=i_*(\PP^1)\cdot \mu^*\Bigl(\lambda-\frac{1}{4}\delta_0^{\mathrm{ram}}\Bigr)=6\cdot 9-\frac{3\cdot 32}{4}=30, \ \ \mbox{ and }$$
$$\gamma\cdot [D_6]=-i_*(\PP^1)\cdot \Bigl(\sum_{j=1}^4 \psi_{x_j}+4\psi_{x_5}\Bigr)+ i_*(\PP^1)\cdot \mu^*(\delta_0^{'})=-8\cdot 9+3\cdot 77=159.$$
We obtain the bound $s(\aa_6)\geq \frac{\gamma\cdot [D_6]}{\gamma\cdot \lambda_1}=\frac{53}{10}$.
\hfill $\Box$
\vskip 4pt

For effective divisors on $\aa_6$ transversal to $\cU_4$, we obtain a better slope bound:

\begin{theorem}\label{u4}
If $E$ is an effective divisor on $\aa_6$ not containing the universal codimension two Prym difference variety $\cU_4\subset \aa_6$, then $s(E)\geq \frac{13}{2}$.
\end{theorem}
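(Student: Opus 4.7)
The plan is to construct a moving curve lying entirely inside the codimension two cycle $\cU_4\subset \aa_6$ and extract the slope bound from its numerical invariants, using the fact that any effective divisor $E$ not containing $\cU_4$ intersects a general member of such a family non-negatively. I mimic the construction of Section~3, replacing the full Abel-Prym map $\mathfrak{ap}$ by its restricted version $\mathfrak{ap}_4:\pc^4\dashrightarrow \py_6$ from Remark~\ref{restrap}, whose image $\cZ_4$ satisfies $(j\circ \chi)(\cZ_4)=\cU_4$. Let $\mathrm{fgt}:\pc^5\to \pc^4$ denote the forgetful map dropping the fifth marked point and $i:\PP^1\to \pc^5$ the curve of Proposition~\ref{pencil3}; set
$$h'':=(j\circ \chi)\circ \mathfrak{ap}_4\circ \mathrm{fgt}\circ i:\PP^1\to \cU_4\subset \aa_6, \qquad \gamma'':=h''_*(\PP^1).$$

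I first verify that $\gamma''$ deforms in a family sweeping $\cU_4$. The data $(o_1,\ell_1),\ldots,(o_4,\ell_4)\in \GG^4$ and $o\in \PP^2$ defining $i$ span an $18$-dimensional parameter space, so the total sweep has dimension at most $19=\dim \cU_4$. Via the birational isomorphism $\zeta:\GG^5\dashrightarrow \pc^5$ of Theorem~\ref{dominance}, the relevant $19$-dim source is $\GG^4\times I\subset \GG^5$, where $I:=\{(o,\ell)\in \GG:o\in \ell\}$; its image under $\mathrm{fgt}\circ \zeta$ fills $\pc^4$, since a fibre of $\mathrm{fgt}$ corresponds (up to the fixed first four $\GG$-factors) to the Prym curve $\widetilde{C}$ embedded in $\GG$ via the map classifying its lines in the fibres of $p$, and this curve meets the divisor $I\subset \GG$ in only finitely many points. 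Hence the family of $\gamma''$'s dominates $\cU_4$, so $\gamma''\cdot E\geq 0$ for every effective $E\not\supset \cU_4$. By Proposition~\ref{mumford}(ii), the class $(j\circ \chi)^*(\lambda_1)=\varphi^*\bigl(\lambda-\tfrac14\delta_0^{\mathrm{ram}}\bigr)$ is independent of the chosen Abel-Prym factor, so
$$\gamma''\cdot \lambda_1 = i_*(\PP^1)\cdot \mu^*\bigl(\lambda-\tfrac14\delta_0^{\mathrm{ram}}\bigr) = 6\cdot 9 - \tfrac{3\cdot 32}{4} = 30,$$
exactly as in Theorem~\ref{slope}.

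For the boundary class, Proposition~\ref{mumford}(iii) combined with Remark~\ref{restrap} yields
$$(j\circ \chi\circ \mathfrak{ap}_4)^*([D_6]) = -\sum_{j=1}^{4} \psi_{x_j} + \mu_4^*(\delta_0')+(\cdots),$$
where $\mu_4:\pc^4\to \rr_6$ is the projection and the ellipsis collects boundary coincidence classes. Pulling back via $\mathrm{fgt}$ and pairing with $i(\PP^1)$, both the forgetful correction $\mathrm{fgt}^*\psi_{x_j}-\psi_{x_j}=-[\Delta_{j,5}]$ and the ellipsis contribute trivially, because along $i(\PP^1)$ the five marked points $\ell_1,\ldots,\ell_4,\ell\in \widetilde{C}_\ell$ lie over the pairwise distinct base points $o_1,\ldots,o_4,o\in \PP^2$ (the same reasoning as in the proof of Theorem~\ref{slope}). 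By Propositions~\ref{pencil2} and \ref{pencil3},
$$\gamma''\cdot [D_6] = -\sum_{j=1}^4 i_*(\PP^1)\cdot \psi_{x_j}+i_*(\PP^1)\cdot \mu^*(\delta_0') = -4\cdot 9+3\cdot 77 = 195,$$
so $s(E)\geq 195/30 = 13/2$. The main technical point is the dimension verification that $\gamma''$ sweeps $\cU_4$; the improvement over the slope $53/10$ of Theorem~\ref{slope} comes precisely from the suppression of the $2\psi_{x_5}$ term in $\mathfrak{ap}^*(\theta_{\mathrm{pr}})$, which no longer appears in $\mathfrak{ap}_4^*(\theta_{\mathrm{pr}})$.
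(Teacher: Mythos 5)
Your proposal is correct and follows essentially the same route as the paper: the paper's proof likewise retains only the first four sections of the family from Proposition \ref{pencil3} to get $i_4:\PP^1\rightarrow \pc^4$, pushes forward via the restricted Abel--Prym map of Remark \ref{restrap}, and computes $\gamma_4\cdot \lambda_1=30$ and $\gamma_4\cdot [D_6]=-4\cdot 9+3\cdot 77=195$. The only difference is that you spell out the dimension count showing the curve sweeps $\cU_4$ and the vanishing of the forgetful/diagonal corrections, which the paper simply asserts.
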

\begin{proof} We consider the family $(q_{\PP^1}:\cZ\rightarrow \PP^1, \ \tau_1, \ldots, \tau_4:\PP^1\rightarrow \cZ)$ obtained from the construction explained in Theorem \ref{pencil3}, where we retain only the first four sections. We obtain an induced moduli map $i_4:\PP^1\rightarrow \pc^4$. Pushing $i_4$ forward via the Abel-Prym map, we obtain a curve $h_4:\PP^1\rightarrow \cU_4\subset \aa_6$, which fills-up the locus $\cU_4$. Thus $\gamma_4:=(h_4)_*(\PP^1)\in NE_1(\aa_6)$ is an effective class which intersects non-negatively any effective divisor on $\aa_6$ not containing $\cU_4$. We compute using Theorems \ref{pencil2} and \ref{pencil3}:
$$\gamma_4\cdot \lambda_1=\gamma\cdot \lambda_1=30 \  \mbox{  }  \mbox{ and } \ \  \gamma_4\cdot [D_6]=-4\cdot 9+3\cdot 77=195.$$
\end{proof}

\end{document}